\newtheorem{theorem}{Theorem}[section]
\newtheorem{corollary}[theorem]{Corollary}
\newtheorem{lemma}[theorem]{Lemma}
\newtheorem{proposition}[theorem]{Proposition}
\newtheorem{definition}[theorem]{Definition}
\newtheorem{remark}[theorem]{Remark}
\newcommand{\var}[2]{\textup{Var}_{#2}(#1)}
\title{Metrics on trees I. \\ The tower algorithm for interval maps.}
\author{Giulio Tiozzo}
\address{University of Toronto}
\email{tiozzo@math.utoronto.ca}
\date{\today}
\begin{document}
\maketitle

\begin{abstract}
We consider Milnor's \emph{tower algorithm} in the space of piecewise monotone maps, 
an iterative algorithm on the space of metrics which unifies, on the one hand, Thurston's iterative 
scheme which converges to holomorphic models, and, on the other hand, the theory of 
piecewise linear models coming from kneading theory. 
We prove that the algorithm converges for unimodal maps of high entropy, 
and provide examples where it does not converge for unimodal maps of low entropy and multimodal maps 
of higher degree. 
\end{abstract}

\section{Introduction}

A common theme in dynamics is the idea of ``geometrization": namely, one starts with a continuous map 
$f: X \to X$, seen as a topological dynamical system, and wants to produce a topologically conjugate (or semiconjugate) map to $f$ which is the ``nicest", meaning that it preserves an additional metric structure. 
For instance, according to a certain viewpoint, it is natural to ask for a representative which is holomorphic, while 
another natural choice would be to prefer a piecewise linear representative. 

In complex dynamics, the most striking example of this procedure is likely W. Thurston's theorem \cite{DH} on the realization of postcritically finite rational maps with given combinatorics. The theorem rests on an iterative scheme on Teichm\"uller space $\mathcal{T}$, which
is the space of marked complex structures on a (topological) surface.  The iteration is essentially given by pulling back the complex structure, and one shows that contracting properties of this map on $\mathcal{T}$ imply that an initial complex structure converges under iteration to a fixed point, which produces an invariant complex structure, in turn yielding a rational map homotopic to the initial one.  

In \cite{Mil}, Milnor proposes a new variation of Thurston's algorithm 
using a ``tower" of homeomorphisms. One of the advantages is that this algorithm can be 
at least defined even for postcritically infinite maps (even though its convergence remains to be proven). 
In \cite{Mil-slides}, he defines a generalized version of this algorithm, called the \emph{tower algorithm}. 
Such iteration starts with a topological dynamical system and aims to produce in the limit a
representative of the initial system which belongs to a special class we are interested in,
for instance holomorphic or piecewise linear.

In this paper, we consider the convergence properties of the tower algorithm  on the space of piecewise monotone interval maps. 
It turns out that it is convenient to frame the problem in terms of the space of \emph{metrics} on the interval, and consider iterations by a \emph{pullback operator}
on it. 

In Part II of this work, we shall generalize this approach to complex polynomials, showing its relation to core entropy and measured laminations.

\subsection{Critical values}

Let $I = [a,b]$ be a closed interval. A \emph{piecewise monotone map} $f$ is a continuous map $f : I \to I$ with $f(\partial I ) \subseteq \partial I$, for which there 
exist finitely many points 
$a = c_0 < c_1 < \dots < c_{d-1} < c_d = b$ in $I$, called \emph{critical points}, or \emph{turning points}, such that 
the restriction of $f$ to each $I_k := [c_{k-1}, c_{k}]$ is strictly monotone, and for any $1 \leq k \leq d-1$ the monotonicity of $f$ on $I_k$ and $I_{k+1}$ is different.  
The minimum such $d$ is called the \emph{degree} of the map $f$.
We denote as $\mathcal{F}_d$ the set of piecewise monotone maps of 
degree $d$.

\begin{definition}
We define the \emph{critical value vector} $CV(f)$ of the map $f$ as the vector 
$$CV(f) := (f(c_0), f(c_1), \dots, f(c_{d-1}), f(c_d)).$$
\end{definition}

A piecewise monotone map $f$ is \emph{piecewise linear of constant slope} if there exists $s$ such 
that the slope of $f$ equals $\pm s$ at each non-turning point. 

Following \cite{Mil-slides}, we give the definition:

\begin{definition}
A subset $\mathcal{G} \subseteq \mathcal{F}_d$ is \emph{parameterized by critical values} if for any $f \in \mathcal{F}_d$ 
there is one and only one $ g \in \mathcal{G}$ with the same critical value vector as $f$.
\end{definition}

The two main classes of maps parameterized by critical values we will consider are: 

\begin{enumerate}

\item The class $\mathcal{G}_{Pol}$ of polynomial maps with all critical points real and distinct, and in the interior of $I$; 

\item the class $\mathcal{G}_{CS}$ of piecewise linear monotone maps of constant slope.

\end{enumerate}

\subsection{An iterative procedure}

Any class of maps $\mathcal{G}$ parameterized by critical values defines an iterative  process in the space of piecewise monotone maps.
Let us fix such a class $\mathcal{G}$. The algorithm is as follows. 

\smallskip
\textbf{Step 1.} Given a piecewise monotone $f : I \to I$, let $g = g_f$ be the unique map in $\mathcal{G}$ such that 
$CV(f) = CV(g)$.

\smallskip
\textbf{Step 2.} Then, there exists a unique
homeomorphism $h = h_{f, g} : I \to I$ such that $f = g \circ h$.
The observation gives rise to the following diagram:
$$\xymatrix{
I \ar[dr]^g  \\
I \ar[r]^{f} \ar[u]^h & I  .
}$$

\smallskip
\textbf{Step 3.} Following (\cite{Mil-slides}, Slide 16), we define the operator $\Theta : \mathcal{F}_d \to \mathcal{F}_d$ as 
$$\Theta(f) := h_{f, g} \circ g_f.$$
The operator $\Theta(f)$ fits in the diagram:
$$\xymatrix{
I \ar[r]^{\Theta(f)} \ar[dr]^g & I \\
I \ar[r]^{f} \ar[u]^h & I \ar[u]_h .
}$$

\smallskip \textbf{Step 4.} By iterating this procedure, one produces a sequence $f_n := \Theta^n(f)$ of piecewise monotone maps, 
a sequence $(g_n)$ of maps in the class $\mathcal{G}$, and a sequence $(h_n)$ of homeomorphisms, 
which fit in the following tower:

$$\xymatrix{
I \ar[r]^{f_{n+1}} \ar[dr]^{g_n} & I  \\
I \ar[r]^{f_n} \ar[u]^{h_n} & I  \ar[u]_{h_n} \\
I \ar[r]^{f_2} \ar[dr]^{g_1} \ar@{-->}[u]  & I  \ar@{-->}[u] \\
I \ar[r]^{f_1} \ar[dr]^{g_0} \ar[u]^{h_1} & I \ar[u]_{h_1} \\
I \ar[r]^{f} \ar[u]^{h_0} & I \ar[u]_{h_0} 
}$$
where each $f_n$ is topologically conjugate to the initial $f = f_0$.

\begin{figure}
\begin{center}
\begin{minipage}{0.49 \textwidth} 
\includegraphics[width=1.0 \textwidth]{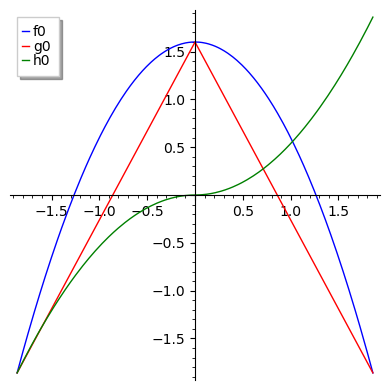}
\end{minipage}
\begin{minipage}{0.49 \textwidth}
\includegraphics[width=1.0 \textwidth]{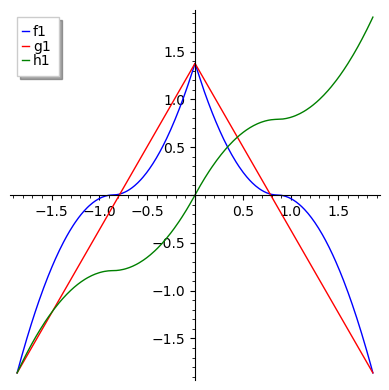}
\end{minipage}
\end{center}
\caption{The first two iterations of the tower algorithm. In most cases, $(f_n)$ converges 
to a piecewise linear map, and $(h_n)$ converges to the identity.}
\end{figure}

\subsection{Convergence} It is now natural to ask under which conditions this algorithm converges. 

\begin{enumerate}

\item The iteration of $\Theta_{Pol}$ provides an algorithm which, at least in the postcritically finite case, 
is equivalent to Thurston's algorithm; we present a more detailed description of the relationship in the Appendix. 

\item The iteration of $\Theta_{CS}$ is the main subject of the rest of the paper.

\end{enumerate}

\medskip

\noindent In the context of $\Theta_{CS}$, Milnor\footnote{For a collection of videos of examples and counterexamples to convergence, 
see \url{http://www.math.stonybrook.edu/~jack/BREMEN/}.} asks the following 
\smallskip

\noindent \textbf{Question }(\cite{Mil-slides}, Slide 18). 
For what maps $f = f_0$ does the sequence $(f_n)$ converge to a limit function $f_\infty$?

\medskip \noindent He also formulates the 
\smallskip 

\noindent \textbf{Conjecture }(\cite{Mil-slides}, Slide 19). For any (reasonable ?) $f_0$, the associated
sequence of constant slope maps $(g_n)$ converges, and yields
the ``correct" topological entropy 
$$h_{top}(f_0) = \log^+ (s(g_\infty))$$
where $s(g)$ denotes the slope. 

\medskip
\noindent In this paper, we show (Theorem \ref{T:unimodal-conv}) that the sequences $(f_n)$ and $(g_n)$ converge for unimodal maps with high entropy, and construct (Section \ref{S:counter}) counterexamples to convergence for unimodal maps with low entropy and for multimodal maps 
of degree $d \geq 3$. 

This problem turns out to be closely related to the classical problem of convergence of an initial distribution to the measure of maximal 
entropy; essentially, convergence of the algorithm is equivalent to mixing for the measure of maximal entropy. 
Thus, this is also connected to the spectral properties of the transfer operator, whose theory is by now highly developed (see \cite{Baladi-notes}; 
more details will be given in Section \ref{S:transfer}). 

\subsection*{Acknowledgements}
We thank John Milnor and Viviane Baladi for several useful discussions. 
The author is partially supported by NSERC and an Ontario Early Researcher Award. 

\section{Piecewise linear maps of constant slope}

We now consider the case of piecewise linear maps of constant slope.
The starting point of the iteration is given by the following observation. 

\begin{lemma}
Given a piecewise monotone $f : I \to I$, there exists a unique piecewise linear map $g$ of constant slope, such that $CV(f) = CV(g)$.
Moreover, there exists a unique homeomorphism $h : I \to I$ such that $f = g \circ h$.
\end{lemma}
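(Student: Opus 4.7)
The plan is to build $g$ explicitly from the critical value vector and then force $h$ to be the composition of $f$ with the inverse branches of $g$.

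For existence of $g$, I would set $V(f) := \sum_{k=1}^{d} |f(c_k)-f(c_{k-1})|$ (the total variation) and take the candidate slope to be $s := V(f)/(b-a)$. Since $g$ must have constant slope $\pm s$, the constraint $g(c'_k) = f(c_k)$ forces the critical points of $g$ to satisfy $c'_k - c'_{k-1} = |f(c_k) - f(c_{k-1})|/s$. Defining $c'_0 := a$ and $c'_k$ recursively by this rule gives $c'_d = a + V(f)/s = b$, so the partition fits inside $I$. I then define $g$ to be affine on each $[c'_{k-1}, c'_k]$, interpolating between $f(c_{k-1})$ and $f(c_k)$. The monotonicity of $g$ on $[c'_{k-1},c'_k]$ agrees in sign with that of $f$ on $[c_{k-1},c_k]$, and since $f$ alternates monotonicity, so does $g$, hence $g$ is piecewise monotone of degree $d$.

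Uniqueness of $g$ comes for free from this construction: any piecewise linear $g$ of constant slope $s'$ with $CV(g) = CV(f)$ must have critical points $a = c''_0 < \dots < c''_d = b$ with $c''_k - c''_{k-1} = |f(c_k)-f(c_{k-1})|/s'$. Summing and using $c''_d - c''_0 = b-a$ forces $s' = V(f)/(b-a) = s$, and then the $c''_k$ coincide with the $c'_k$, so $g$ is uniquely determined.

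For the homeomorphism $h$, monotonicity of $h$ and the relation $f = g \circ h$ force $h$ to send turning points of $f$ to turning points of $g$ in order, i.e. $h(c_k) = c'_k$. On each piece, $f$ restricts to a homeomorphism $[c_{k-1},c_k] \to [f(c_{k-1}), f(c_k)]$ (or the reverse interval), and $g$ restricts to a homeomorphism of $[c'_{k-1}, c'_k]$ onto the same image interval with the same orientation; I define $h|_{[c_{k-1},c_k]} := (g|_{[c'_{k-1}, c'_k]})^{-1} \circ f|_{[c_{k-1}, c_k]}$. These pieces agree at the $c_k$ because both sides hit $c'_k$, so $h$ is a continuous piecewise homeomorphism of $I$ sending $\partial I$ to $\partial I$, hence a homeomorphism of $I$. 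Uniqueness of $h$ follows because the relation $f = g\circ h$ together with $h(c_k)=c'_k$ forces $h$ to equal $(g|_{[c'_{k-1},c'_k]})^{-1}\circ f$ on each piece.

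There is no real obstacle here; the only delicate point is checking the orientations of $f$ and $g$ on corresponding pieces match so that the inverse branch composition is well-defined and produces an increasing $h$. This follows from the fact that $CV(f)$ together with the alternation condition simultaneously determines the sign pattern of both maps.
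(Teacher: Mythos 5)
Your construction is correct and is essentially the paper's own argument: the slope is forced to be the total variation of the critical values divided by the length of $I$, the turning points of $g$ are recovered by rescaling the successive gaps $|v_{k}-v_{k-1}|$, and $h$ is defined piecewise as the inverse branch of $g$ composed with $f$, with uniqueness read off from the same formulas. The extra care you take with uniqueness and with matching orientations on corresponding laps only makes explicit what the paper leaves implicit.
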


\begin{proof}
Let us set $v_i := f(c_i)$ for $i = 0, \dots, d$. Let $\widetilde{c}_j$ be the turning points of $g$. Then, if the slope of $g$ is $s > 0$, we have 
$$ |v_{j+1} - v_j| =  s(\widetilde{c}_{j+1} - \widetilde{c}_j) \qquad \textup{for all }0 \leq j \leq d-1$$
hence, by summing over $j$, we obtain 
$$s = \frac{\sum_{j = 0}^{d-1} |v_{j+1} - v_j|}{\widetilde{c}_d - \widetilde{c}_0}.$$
Note that for simplicity we set $\widetilde{c}_0 = 0, \widetilde{c}_d = 1$. Moreover, $\widetilde{c}_i$ is given by 
$$\widetilde{c}_i - \widetilde{c}_0 = \frac{1}{s} \sum_{j = 0}^{i-1} |v_j - v_{j+1}|$$
and we define $g$ as 
$$g(x) := v_j + \epsilon s (x- \widetilde{c}_j) $$
for $\widetilde{c}_j \leq x \leq \widetilde{c}_{j+1}$ and $\epsilon = \pm 1$.
Further,  we note that $f : [c_j, c_{j+1}] \to [v_j, v_{j+1}]$ is a homeomorphism, and the same goes for $g : [\widetilde{c}_j, \widetilde{c}_{j+1}] \to [v_j, v_{j+1}]$.
Hence we define 
$$h\vert_{[c_j, c_{j+1}]} := g^{-1}\vert_{[v_j, v_{j+1}]} \circ f\vert_{[c_j, c_{j+1}]}.$$
\end{proof}

\subsection{The space of metrics}

An \emph{arc} is an unordered pair $[x, y]$ of distinct points of $I$. We think of $[x, y]$ as the 
subsegment of $I$ with endpoints $x, y$. We denote as $\mathcal{A}$ the set of arcs in $I$. 

We define  a \emph{metric} on $I$ to be a functional $m$ on the set $[x,y]$ of pairs of points in $I$, 
which we think of as assigning a length to each arc in the interval. 
We want the additional  properties that 
\begin{itemize}
\item[(i)] 
$m([x,y]) \geq 0$ for any $x,y \in I$;
\item[(ii)]
$m$ is additive on disjoint arcs: if $y$ lies between $x$ and $z$, then $m([x,z]) = m([x,y]) + m([y, z])$.
\end{itemize}

Let $\mathcal{M}(I)$ denote the space of metrics on $I$. This is a cone in a vector space, and has norm $\Vert m \Vert := m(I)$.
Clearly, any non-atomic Borel measure on the interval induces a metric. 
A metric $m$ has \emph{unit length} if $m(I) = 1$. We denote as $\mathcal{M}^1(I)$ the set of unit length metrics on $I$. 

\begin{remark}
On an interval, metrics and measures play almost the same role (except, of course, for the fact that for the moment we only require additivity 
rather than countable additivity); however, framing the problems in terms of metrics will lend itself more naturally to the generalization to trees and laminations
in the second part of this work. 
\end{remark}

\medskip
We define the \emph{weak topology} on the space of metrics by declaring that 
$m_n \rightharpoonup m_\infty$ if 
for any arc $J$ we have 
$$m_n(J) \to m_\infty(J).$$
Moreover, we define the \emph{strong topology} by setting that 
$m_n \to m_\infty$ if 
$$\sup_{J} |m_n(J) - m_\infty(J)| \to 0 \qquad \textup{as }n \to \infty.$$
Note that to any non-atomic, unit length metric of full support $m$ one can 
associate a homeomorphism $h : I \to I$ by defining 
$$h(x) = m([0,x]).$$
In fact, for any measure $m$, $h$ is weakly increasing and right-continuous: if $m$ has full support, 
then it is strictly increasing, while if $m$ is non-atomic, $h$ is continuous.

\subsection{The pullback operator}
Given $f$, we define a \emph{pullback operator} $f^\star : \mathcal{M}(I) \to \mathcal{M}(I)$ as follows.
For each interval $J \subseteq I$ and any $m \in \mathcal{M}(I)$, we let
$$(f^\star m)(J) := \sum_{k=1}^d m(f (J \cap I_k)).$$
By definition, the pullback operator is linear, and it is contravariant, i.e. $(f \circ g)^\star = g^\star \circ f^\star$.
Note that the kernel of $f^\star$ is the space of metrics $m$ such that $m(f(I))= 0$.

\medskip
\noindent \textbf{Example.} If $m_0$ is the metric induced by Lebesgue measure, then for any interval $J$ 
\begin{equation} \label{E:var}
(f^\star m_0)(J) = \textup{Var}_J (f)
\end{equation}
is the total variation of $f$ over $J$.

\begin{definition}
We call a metric $m$ \emph{linearly expanded} by $f$ if there exists $\lambda \in \mathbb{R}$ such that 
$$f^\star m = \lambda m.$$
\end{definition}

\noindent Linearly expanded, unit length metrics are fixed points of the operator
$$P(m) := \frac{f^\star m}{\Vert f^\star m \Vert}.$$

\begin{lemma} \label{L:semiconj}
A linearly expanded metric $m$ defines a semiconjugacy of $f$ to a piecewise linear map: indeed, if one defines 
$$h(x) := m([0, x])$$
then one has 
$$h \circ f = g \circ h$$
where $g$ is the piecewise linear map with slope $\lambda$ and 
critical points $h(c_i)$, where $c_i$ is a critical point for $f$.
\end{lemma}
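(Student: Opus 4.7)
The plan is to deduce the semiconjugacy directly from the linear-expansion identity $f^\star m = \lambda m$ by testing it against short arcs inside each monotonicity piece $I_k = [c_{k-1}, c_k]$. For any $x \in I_k$, the defining formula for the pullback, applied to the arc $[c_{k-1}, x]$, gives
$$\lambda\, m([c_{k-1}, x]) = \sum_{j=1}^{d} m\bigl(f([c_{k-1}, x] \cap I_j)\bigr) = m\bigl(f([c_{k-1}, x])\bigr),$$
because for $j \neq k$ the intersection $[c_{k-1}, x] \cap I_j$ is either empty or a single point, and hence carries zero $m$-mass by additivity.

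Since $f$ is strictly monotone on $I_k$, the image $f([c_{k-1}, x])$ is the arc joining $f(c_{k-1})$ to $f(x)$. Unfolding the displayed identity via $h(y) = m([0,y])$, and letting $\epsilon_k \in \{\pm 1\}$ denote the sign of the slope of $f$ on $I_k$, I would rewrite the equation as
$$h(f(x)) = h(f(c_{k-1})) + \epsilon_k \lambda\bigl(h(x) - h(c_{k-1})\bigr).$$
This exhibits $h \circ f$ as an affine function of $h$ on each $I_k$, with slope $\epsilon_k \lambda$.

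Next I would define $g$ on $[\widetilde{c}_{k-1}, \widetilde{c}_k]$, where $\widetilde{c}_i := h(c_i)$, as the affine map with slope $\epsilon_k \lambda$ passing through the point $(\widetilde{c}_{k-1}, h(f(c_{k-1})))$. By the previous paragraph, the semiconjugacy $g \circ h = h \circ f$ holds on each $I_k$, and hence on all of $I$. The signs $\epsilon_k$ alternate with the monotonicity of $f$, so $g$ is piecewise linear of constant slope $\lambda$ with turning points exactly $\widetilde{c}_i = h(c_i)$, as claimed.

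The only subtlety, rather than a genuine obstacle, is to verify that $g$ is unambiguously defined at each turning point $\widetilde{c}_i$, i.e.\ that the two affine pieces meeting there assign the same value. This is automatic, because each piece is forced to take the value $h(f(c_i))$ at $\widetilde{c}_i$ by specializing the identity above to $x = c_{k-1}$ or $x = c_k$. Note that the argument does not require $h$ to be a homeomorphism, so it applies without assumptions on the support or atoms of $m$.
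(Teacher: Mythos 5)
Your proposal is correct and follows essentially the same route as the paper: both test the identity $f^\star m = \lambda m$ on arcs contained in a single lap $I_k$ (where the pullback reduces to $m$ of the image arc) and rewrite the result in terms of $h(y)=m([0,y])$, exhibiting $h\circ f$ as affine with slope $\pm\lambda$ in the $h$-coordinate. Your version merely spells out a bit more than the paper does --- anchoring the arc at $c_{k-1}$, writing down $g$ explicitly, and checking agreement of adjacent affine pieces at the points $h(c_i)$ --- which is a fine elaboration, not a different argument.
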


\begin{proof}
Indeed, if $m$ is linearly expanded by $f$ and $x,y \in I_k$ for some $k$, we have 
\begin{align*}
f^\star m([x,y]) & = m([f(x), f(y)]) = | h(f(y))- h(f(x)) | \\
& = \lambda m([x,y]) = \lambda(h(y) - h(x)).
\end{align*}
\end{proof}

\begin{corollary}
The Lebesgue measure $m_0$ is linearly expanded by $f$ if and only if $f$ is a piecewise linear map with constant slope.
\end{corollary}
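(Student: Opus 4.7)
The plan is to exploit equation (\ref{E:var}), which identifies $(f^\star m_0)(J)$ with the total variation $\textup{Var}_J(f)$. Once this identification is in hand, the corollary reduces to the elementary fact that a function whose total variation on every subarc is proportional to the subarc's length must itself be piecewise linear of constant slope.

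First I would dispatch the easy direction. Suppose $f$ is piecewise linear with constant slope $s$. For any arc $J$, I would decompose $J$ into the overlaps $J \cap I_k$ with the monotonicity intervals of $f$; on each such overlap $f$ is affine with slope $\pm s$, so $\textup{Var}_{J \cap I_k}(f) = s\,|J \cap I_k|$. Additivity of total variation across the breakpoints then yields $\textup{Var}_J(f) = s\,|J|$, i.e.\ $f^\star m_0 = s\,m_0$, so Lebesgue measure is linearly expanded with $\lambda = s$.

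For the converse, assume $f^\star m_0 = \lambda m_0$. I would fix any monotonicity interval $I_k$ and any subarc $[x,y] \subseteq I_k$. Since $f$ is monotone on $I_k$, the total variation collapses to $\textup{Var}_{[x,y]}(f) = |f(y)-f(x)|$, and the hypothesis forces $|f(y)-f(x)| = \lambda(y-x)$. Letting $x,y$ range over $I_k$ shows that $f\vert_{I_k}$ is affine with slope $\pm\lambda$, and since this holds for every $k$, $f$ is piecewise linear with constant slope $\lambda$ (note $\lambda \geq 0$ automatically, since $f^\star m_0$ is a nonnegative metric).

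The statement follows so directly from (\ref{E:var}) that there is no substantive obstacle; the only point of care is the additivity of total variation across the turning points $c_k$, which is a standard property of piecewise monotone maps and could be mentioned in a single line.
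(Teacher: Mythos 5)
Your argument is correct. Both directions are exactly as routine as you claim: the forward direction is the additivity of variation over the monotonicity pieces, and for the converse, monotonicity of $f$ on $I_k$ collapses $(f^\star m_0)([x,y])=\textup{Var}_{[x,y]}(f)$ to $|f(y)-f(x)|=\lambda(y-x)$, which pins down affinity with slope $\pm\lambda$ on each lap. The paper itself prints no proof, but its placement of the corollary immediately after Lemma \ref{L:semiconj} indicates the intended route: apply that lemma with $m=m_0$, so that $h(x)=m_0([0,x])$ is the identity (up to normalization) and the semiconjugacy $h\circ f=g\circ h$ degenerates to $f=g$, a constant-slope map. Your direct verification via \eqref{E:var} is essentially the same computation carried out by hand rather than quoted from the lemma, and it has the small advantage of also recording the trivial forward implication, which Lemma \ref{L:semiconj} does not address. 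No gap.
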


\begin{lemma}
There exists a metric $m$ on $I$ of unit length which is linearly expanded by $f$. 
\end{lemma}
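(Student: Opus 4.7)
The plan is to realize a unit-length, linearly expanded metric as a fixed point of the nonlinear operator
$$P(m) = \frac{f^\star m}{\Vert f^\star m \Vert}$$
on $\mathcal{M}^1(I)$, and to produce such a fixed point by a Perron--Frobenius / Schauder--Tychonoff argument in the weak topology.

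First I would check that $\mathcal{M}^{\leq 1}(I) := \{m \in \mathcal{M}(I) : \Vert m \Vert \leq 1\}$ is convex and weakly compact. This is essentially Helly's theorem: embed $\mathcal{M}^{\leq 1}(I)$ into $[0,1]^{\mathcal{A}}$ via $m \mapsto (m(J))_{J \in \mathcal{A}}$, observe that the weak topology is exactly the restriction of the product topology, and note that the image is cut out by positivity and the additivity constraint on disjoint arcs, all closed conditions. The slice $\mathcal{M}^1(I)$ is then also convex and weakly compact. I would then record that $f^\star$ is weakly continuous: since $f$ is monotone on each $I_k$, every $f(J \cap I_k)$ is an arc, so the definition of weak convergence directly yields $(f^\star m_n)(J) = \sum_k m_n(f(J \cap I_k)) \to (f^\star m)(J)$, and in particular $\Vert f^\star m_n \Vert \to \Vert f^\star m \Vert$.

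The main obstacle is that $P$ need not be everywhere defined on $\mathcal{M}^1(I)$, because $\Vert f^\star m \Vert$ can vanish (precisely when $m$ is supported off $f(I)$). To sidestep this I would introduce the regularized operator
$$T_\epsilon(m) := \frac{f^\star m + \epsilon m_0}{\Vert f^\star m + \epsilon m_0 \Vert},$$
where $m_0$ is the Lebesgue metric. For any $\epsilon > 0$ the denominator is bounded below by $\epsilon$, so $T_\epsilon \colon \mathcal{M}^1(I) \to \mathcal{M}^1(I)$ is a weakly continuous self-map of a convex compact set; Schauder--Tychonoff then produces $m_\epsilon \in \mathcal{M}^1(I)$ satisfying $f^\star m_\epsilon + \epsilon m_0 = \lambda_\epsilon m_\epsilon$, where $\lambda_\epsilon := \Vert f^\star m_\epsilon + \epsilon m_0 \Vert$.

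To finish, I would pass to the limit $\epsilon \to 0$. The bound $\lambda_\epsilon \leq \Vert f^\star m_\epsilon \Vert + \epsilon \leq d + \epsilon$ is uniform, so along a subsequence $m_{\epsilon_n} \rightharpoonup m_\infty$ and $\lambda_{\epsilon_n} \to \lambda_\infty$. Evaluating weak convergence at $J = I$ gives $\Vert m_\infty \Vert = 1$, so $m_\infty$ is nonzero; and weak continuity of $f^\star$ together with $\epsilon_n m_0 \rightharpoonup 0$ allows us to pass to the limit in the perturbed eigenvalue equation to obtain $f^\star m_\infty = \lambda_\infty m_\infty$, as required.
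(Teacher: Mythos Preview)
Your proposal is correct and follows the same overall strategy as the paper: realize a linearly expanded metric as a fixed point of $P(m) = f^\star m / \Vert f^\star m \Vert$ on the convex, weakly compact set $\mathcal{M}^1(I)$, and invoke Schauder--Tychonoff.

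The difference is that you are more careful than the paper about one point. The paper simply asserts that $P$ is a continuous self-map of $\mathcal{M}^1(I)$ and applies the fixed-point theorem directly. But, as you observe, $P$ is undefined at any metric supported off $f(I)$, and in general $f(I)$ can be a proper subinterval (e.g.\ for a unimodal $f$ with $f(0)=f(1)=0$ and $f(c)<1$). Your $\epsilon$-regularization $T_\epsilon(m) = (f^\star m + \epsilon m_0)/\Vert f^\star m + \epsilon m_0\Vert$ cleanly sidesteps this, and the passage to the limit via weak compactness and weak continuity of $f^\star$ is correct. The only thing worth adding is that the resulting $\lambda_\infty$ could in principle be $0$ (if $m_\infty$ ends up supported off $f(I)$); this is still formally ``linearly expanded'' under the paper's definition, and in any case one can rule it out a posteriori by noting that $\lambda_{\epsilon} = \Vert f^\star m_\epsilon\Vert + \epsilon \geq (f^\star m_\epsilon)(I) \geq m_\epsilon(f(I))$, which is bounded below once one observes $m_\epsilon \geq (\epsilon/\lambda_\epsilon) m_0$ from the fixed-point equation.
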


\begin{proof}
Let us consider the space $\mathcal{M}^1(I)$ of unit length metrics on $I$. This is a convex subset of the topological vector space $\mathbb{R}^{\mathcal{A}}$, and it is continuous with respect to the weak topology. 
Then the operator 
$P :  \mathcal{M}^1(I) \to \mathcal{M}^1(I)$ defined as 
$$P(m) := \frac{f^\star m}{\Vert f^\star m \Vert}$$
is continuous, hence it has a fixed point. The fixed point is a metric which is linearly expanded by the dynamics.
\end{proof}

\subsection{Iteration in the space of metrics}

Let us denote as $m_0$ the unit length metric induced by normalized Lebesgue measure.
Moreover, we let $H_n := h_{n-1} \circ h_{n-2} \circ \dots \circ h_1 \circ h_0$, so that 
\begin{equation} \label{E:defHn}
f_n \circ H_n = H_n \circ f_0
\end{equation}
for any $n$. Denote as $s_i$ the slope of $g_i$. 

\begin{lemma}
We have for any $n \geq 1$ the identity
$$(f^\star)^{n}(m_0) = s_0 s_1 \dots s_{n-1} (H_n^\star)(m_0).$$ 
\end{lemma}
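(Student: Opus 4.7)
The plan is to prove the identity by induction on $n$, exploiting three ingredients: the contravariance of $f^\star$, the fact that a piecewise linear map of constant slope $s$ satisfies $g^\star m_0 = s\, m_0$ (since by the example $(g^\star m_0)(J) = \textup{Var}_J(g) = s\,|J|$), and the semiconjugacy (\ref{E:defHn}).

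For the base case $n=1$, I would start from the factorization $f = f_0 = g_0 \circ h_0$ given by Step 2 of the tower construction. By contravariance, $f^\star = h_0^\star \circ g_0^\star$. Since $g_0$ has constant slope $s_0$, we have $g_0^\star m_0 = s_0 m_0$, so
\[
f^\star m_0 = h_0^\star(s_0 m_0) = s_0\, h_0^\star m_0 = s_0\, H_1^\star m_0,
\]
using linearity of $h_0^\star$ and $H_1 = h_0$.

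For the inductive step, assume $(f^\star)^n m_0 = s_0 s_1 \cdots s_{n-1}\, H_n^\star m_0$. Applying $f^\star$ and using linearity, it suffices to compute $f^\star \circ H_n^\star\, m_0$. By contravariance applied to (\ref{E:defHn}),
\[
f_0^\star \circ H_n^\star = (H_n \circ f_0)^\star = (f_n \circ H_n)^\star = H_n^\star \circ f_n^\star.
\]
Now the same argument as in the base case, applied to the factorization $f_n = g_n \circ h_n$, gives $f_n^\star m_0 = s_n\, h_n^\star m_0$. Combining these with the identity $H_n^\star \circ h_n^\star = (h_n \circ H_n)^\star = H_{n+1}^\star$ yields
\[
f^\star \bigl( H_n^\star m_0 \bigr) = H_n^\star \bigl( s_n h_n^\star m_0 \bigr) = s_n\, H_{n+1}^\star m_0,
\]
which multiplied by $s_0 \cdots s_{n-1}$ closes the induction.

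There is no real obstacle here; the proof is essentially a bookkeeping exercise. The one point deserving care is the direction of the arrows: the pullback is contravariant, so the ordered composition $H_n = h_{n-1} \circ \cdots \circ h_0$ in the base produces the reversed composition on the level of pullback operators, which is exactly what is needed to match the one-step iteration of $f_0^\star$ to the top of the tower via $f_n$.
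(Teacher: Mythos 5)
Your proof is correct and follows essentially the same route as the paper: induction on $n$, with the base case from the factorization $f = g_0 \circ h_0$, contravariance, and $g_0^\star m_0 = s_0 m_0$, and the inductive step from $f_n \circ H_n = H_n \circ f$ together with the one-step identity $f_n^\star m_0 = s_n h_n^\star m_0$ and $H_n^\star \circ h_n^\star = H_{n+1}^\star$. No discrepancies to note.
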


\begin{proof}
Let us first prove the case $n = 1$. We claim that if $g$ and $h$ are constructed from $f$ according to the 
above mentioned algorithm, then we have 
$$f^\star(m_0) = s \ h^\star(m_0)$$ 
where $s$ is the slope of $g$. 
From $f = g \circ h$ we get $f^\star = h^\star \circ g^\star$.
Then if $m_0$ is the Lebesgue measure, we have, since $g$ is piecewise linear with slope $s$
$$f^\star(m_0) = h^\star (g^\star m_0) = h^\star (s m _0) = s h^\star (m_0).$$
To prove the general claim, let us proceed by induction. Note that the previous argument with $f= f_n$, $g = g_n$ and 
$h = h_n$ yields
\begin{equation} \label{E:pullback}
f_n^\star(m_0) = s_n \ h_n^\star(m_0)
\end{equation}
where $s_n$ is the slope of $g_n$. Now, the inductive hypothesis is that 
$$(f^\star)^n(m_0) = s_0 \dots s_{n-1} H_{n}^\star(m_0).$$
By applying $f^\star$ to both sides, one gets
$$(f^\star)^{n+1}(m_0) = s_0 \dots s_{n-1} f^\star H_{n}^\star(m_0) $$
and using $f_n \circ H_{n} = H_{n} \circ f$ and equation \eqref{E:pullback} we get 
$$ (f^\star)^{n+1}(m_0) = s_0 \dots s_{n-1} H_{n}^\star f_n^\star(m_0) = s_0 \dots s_{n-1} s_n H_{n}^\star h_n^\star(m_0)$$
which yields the claim as $H_{n}^\star \circ h_n^\star = H_{n+1}^\star$ by construction.
\end{proof}

\begin{corollary} 
We have for any $n \geq 0$ the identity
$$P^{n}(m_0) = (H_n^\star)(m_0).$$
\end{corollary}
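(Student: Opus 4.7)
The plan is to combine the preceding lemma with the homogeneity of the projective operator $P$. The key observation is that, because $P$ normalizes at every step and $f^\star$ is linear, iterates of $P$ are just renormalized iterates of $f^\star$: starting from any unit-length metric $m$ and using $f^\star(cm) = c f^\star m$ for $c > 0$, one shows by a short induction that
$$P^n(m) = \frac{(f^\star)^n(m)}{\Vert (f^\star)^n(m) \Vert}.$$

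Applying this to $m = m_0$ and plugging in the previous lemma, the numerator becomes $s_0 s_1 \cdots s_{n-1}\, H_n^\star(m_0)$. Thus it only remains to compute the norm $\Vert (f^\star)^n(m_0)\Vert = s_0 s_1 \cdots s_{n-1}\, \Vert H_n^\star(m_0)\Vert$, and to see that the scalar factors cancel, which reduces everything to checking that $\Vert H_n^\star(m_0)\Vert = 1$.

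For the last point I would argue that $H_n : I \to I$ is a homeomorphism of $I$ onto itself (being a composition of homeomorphisms $h_i$ of $I$), so by the definition of the pullback on a single-interval domain
$$H_n^\star(m_0)(I) \;=\; m_0(H_n(I)) \;=\; m_0(I) \;=\; 1.$$
Substituting this in shows $P^n(m_0) = H_n^\star(m_0)$, as required. There is no real obstacle here; the only thing to be careful about is the base case $n = 0$, where both sides equal $m_0$ by convention (empty composition), and to make the normalization identity for $P^n$ precise, but both are routine.
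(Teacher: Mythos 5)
Your proof is correct and follows exactly the route the paper intends: the corollary is treated there as an immediate consequence of the preceding lemma, obtained by normalizing $(f^\star)^n(m_0) = s_0\cdots s_{n-1}\,H_n^\star(m_0)$ and noting that $H_n^\star(m_0)$ has unit norm because $H_n$ is a homeomorphism of $I$. Your extra care about the homogeneity induction for $P^n$ (valid here since $\Vert (f^\star)^k(m_0)\Vert = s_0\cdots s_{k-1} > 0$) and the $n=0$ base case is fine but not a departure from the paper's argument.
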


Hence we can look at the iterative procedure described above in the following way: 
we start with 
$$m_{n} := \frac{ (f^\star)^n(m_0) }{\Vert (f^\star)^n(m_0) \Vert},$$
then there exists a unique homeomorphism $H_{n}$ such that 
$$m_{n} = H_{n}^\star(m_0).$$
Finally, we get 
$$f_{n} = H_{n} \circ f \circ (H_{n})^{-1}.$$

\subsection{Hofbauer towers} 

Let $C$ be the set of turning points of $f$, and let $I_1, \dots, I_d$ be the closures of the connected components of $I \setminus C$. 
Let $\mathcal{D}$ be the smallest set of intervals in $I$ with the two properties: 
\begin{enumerate}
\item $I_1, I_2, \dots, I_d$ belong to $\mathcal{D}$; 

\item if $J$ belongs to $\mathcal{D}$, then $f(J) \cap I_i$ belongs to $\mathcal{D}$ 
for any $i$ such that the interiors of $f(J) $ and $I_i$ intersect. 

\end{enumerate}
Then, if we consider $\ell^1(\mathcal{D}) := \{ \sum_{J \in \mathcal{D}} a_J e_J \ : \ \sum |a_J| < \infty \}$,  we have the map 
$$p : \mathcal{M}(I) \to \ell^1(\mathcal{D})$$
defined as 
$$p(m) := \sum_{J \in \mathcal{D}} m(J) e_J.$$

\begin{lemma}
The map $p$ semiconjugates the pullback map $f^\star$ on $\mathcal{M}(I)$ to a linear map 
$T: \ell^1(\mathcal{D}) \to \ell^1(\mathcal{D})$
so that $p \circ f^\star = T \circ p$.
\end{lemma}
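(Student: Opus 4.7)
The plan is to produce an explicit combinatorial description of $T$ on the canonical basis $\{e_{J'}\}_{J' \in \mathcal{D}}$ by unwinding the definition of $f^\star$ on elements of $\mathcal{D}$, and then verify $p \circ f^\star = T \circ p$ by a direct rearrangement of sums. The whole argument rests on a simple structural observation about $\mathcal{D}$.

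First, I would show by induction on the generation in the recursive construction of $\mathcal{D}$ that every $J \in \mathcal{D}$ is contained in a unique basic interval $I_{k(J)}$: the base case $I_k \subseteq I_k$ is trivial, and the inductive step uses $f(J') \cap I_i \subseteq I_i$. Consequently $f|_J$ is strictly monotone and $f(J)$ is a nondegenerate arc. For any $m \in \mathcal{M}(I)$, all summands in $(f^\star m)(J) = \sum_k m(f(J \cap I_k))$ with $k \neq k(J)$ reduce to single points, which I treat as having $m$-measure zero (consistent with the additivity axiom), so $(f^\star m)(J) = m(f(J))$. Since $f(J)$ is partitioned by the basic intervals as $f(J) = \bigcup_i (f(J) \cap I_i)$, additivity of $m$ yields
$$(f^\star m)(J) = \sum_i m(f(J) \cap I_i),$$
and by clause (2) in the definition of $\mathcal{D}$ each nondegenerate $f(J) \cap I_i$ is itself an element of $\mathcal{D}$.

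Next I would define $T$ on the canonical basis by
$$T e_{J'} := \sum_{\substack{(J, i)\, :\, J \in \mathcal{D},\ 1 \leq i \leq d \\ f(J) \cap I_i = J'}} e_J,$$
and extend by linearity. The commutation then follows from a direct rearrangement (justified by absolute convergence on $\ell^1(\mathcal{D})$):
$$T(p(m)) = \sum_{J' \in \mathcal{D}} m(J')\, T e_{J'} = \sum_{J \in \mathcal{D}} \Bigl(\sum_i m(f(J) \cap I_i)\Bigr) e_J = \sum_{J \in \mathcal{D}} (f^\star m)(J)\, e_J = p(f^\star m).$$

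The main obstacle I foresee is verifying that $T$ is genuinely a well-defined operator on $\ell^1(\mathcal{D})$: one needs to check that for each $J' \in \mathcal{D}$ only finitely many pairs $(J, i)$ satisfy $f(J) \cap I_i = J'$, and ideally that this multiplicity is bounded uniformly in $J'$ so that $T$ extends to a bounded operator. The index $i$ is forced to be the unique $k(J')$ with $J' \subseteq I_{k(J')}$, and for each candidate basic interval $I_k$ any contributing $J \subseteq I_k$ must contain $(f|_{I_k})^{-1}(J')$ and can only extend beyond it into subintervals of $I_k$ where $f$ leaves $I_{k(J')}$ into an adjacent basic interval; since the degree $d$ is finite and such extensions are themselves constrained to lie in $\mathcal{D}$, this count is finite, yielding a well-defined matrix for $T$ and completing the proof.
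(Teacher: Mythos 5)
Your structural reduction is the right one and is exactly what the paper's one-line proof implicitly relies on: every $J \in \mathcal{D}$ lies in a single lap, so $(f^\star m)(J) = m(f(J)) = \sum_i m(f(J)\cap I_i)$, the degenerate intersections contribute nothing, and each nondegenerate piece $f(J)\cap I_i$ again belongs to $\mathcal{D}$. But note that the operator you define is the \emph{transpose} of the one the paper defines: the paper sets $T(e_J) := \sum_i e_{f(J)\cap I_i}$, pushing $e_J$ to its \emph{successors} in the Hofbauer diagram, which has at most $d$ terms, hence is manifestly a bounded operator on $\ell^1(\mathcal{D})$ of norm at most $d$, and is the adjacency operator used later in the paper (it is the one satisfying $\mathcal{L}\circ i = i \circ T$ and entering Hofbauer's entropy formula). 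You instead sum over \emph{predecessors}. Your convention is the one for which the displayed identity $p\circ f^\star = T\circ p$ holds literally, coefficient by coefficient (with the paper's convention the relation is the dual statement $\sum_J (Tu)_J\, p(m)_J = \sum_J u_J \, p(f^\star m)_J$), so you have correctly identified which matrix intertwines the pullback; the trouble is that this choice is precisely what creates the analytic problem you then have to confront.

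That problem -- well-definedness of your $T$ as a map $\ell^1(\mathcal{D})\to\ell^1(\mathcal{D})$, which you yourself single out as the main obstacle -- is a genuine gap, and the argument you sketch for it does not work. The predecessors of a fixed $J'$ are the vertices $J$, lying in some lap $I_k$, which contain $(f|_{I_k})^{-1}(J')$ and whose remaining endpoint maps past the critical point bounding $I_{k(J')}$; saying that such $J$ are ``constrained to lie in $\mathcal{D}$'' bounds nothing, because $\mathcal{D}$ is in general infinite and can contain infinitely many nested intervals with a common endpoint, all cut at the same critical point and hence all having the same piece $J'$ among their successors. For a unimodal map with an infinite tower, infinitely many vertices are cut at $c$, each cut produces a vertex with endpoint $c$, and whenever infinitely many of these reach past $f^{-1}(c)$ (which happens for many such maps) the single vertex $[c, f(c)]$ has infinite in-degree; in-degrees are certainly not uniformly bounded in general. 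Consequently $T e_{J'}$ need not lie in $\ell^1(\mathcal{D})$, and even when every in-degree is finite your formula does not define $T$ on all of $\ell^1(\mathcal{D})$ without a uniform bound on the columns. A related soft spot: $p(m)$ itself need not belong to $\ell^1(\mathcal{D})$, since the intervals of $\mathcal{D}$ overlap heavily and infinitely many of them can contain a fixed subinterval; so the appeal to ``absolute convergence on $\ell^1(\mathcal{D})$'' is misplaced -- for nonnegative $m$ your interchange of sums is fine by monotone convergence, but the membership claims are not. The way out, which is the paper's, is to take $T$ to be the successor (adjacency) operator, bounded on $\ell^1$, and read the semiconjugacy coordinatewise or through the pairing above; if you want to keep your transposed operator you must either work in the full product space $\mathbb{R}^{\mathcal{D}}$ (or in $\ell^\infty(\mathcal{D})$), or prove a uniform bound on in-degrees, which fails in general.
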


\begin{proof}
We define the map $T$ on each basis vector $e_J$ as 
$$T(e_J) := \sum_{i=1}^d e_{f(J) \cap I_i}$$
and extend it by linearity.
\end{proof}

The above space $\ell^1(\mathcal{D})$ has been defined by Hofbauer, 
who shows (\cite[Theorem 4]{Ho}) that 
$$h_{top}(f) = \log \rho(T)$$
where $T$ is the adjacency operator of the graph in $\ell^1(\mathcal{D})$ and $\rho$ is the spectral radius. 
He also proves that every topologically transitive piecewise monotone map has a unique measure of maximal entropy.
See also Raith \cite{Ra}.

\section{Transfer operators} \label{S:transfer}

Given a partition  $a = x_0 < x_1 < \dots < x_{k-1} < x_k = b$ of $[a, b]$, denoted by $\mathcal{P}$, we define 
$\textup{Var}(\varphi, \mathcal{P}) :=  \sum_{i = 0}^{k-1} |\varphi(x_{i+1}) - \varphi(x_i)|$.
A function $\varphi : I \to \mathbb{R}$ has \emph{bounded variation} if 
$$\textup{Var}_I (\varphi) := \sup_{\mathcal{P}} \textup{Var}(\varphi, \mathcal{P}) < \infty.$$ 
Given a measure $\mu$ on $I$, we consider the space $BV$ of functions of bounded variation, 
with norm 
$$\Vert \varphi \Vert_{BV} := \textup{Var}_I (\varphi) + \Vert \varphi \Vert_\infty.$$
The space $\mathcal{S}$ of \emph{step functions} is the space of all finite linear combinations 
of the characteristic functions $\chi_J$ of all subintervals $J \subset I$. 
A metric $m$ defines a linear operator $\mathbb{E}_m : \mathcal{S} \to \mathbb{R}$ by
$$\mathbb{E}_m(\varphi) := \sum_J a_J m(J)$$
if $\varphi = \sum_J a_J \chi_J$. We also write $\int \varphi \ dm$ instead of $\mathbb{E}_m(\varphi)$. 
Recall the well-known

\begin{lemma}  \label{L:dense}
The space $\mathcal{S}$ is dense in the space BV, with respect to the $\sup$ norm.
\end{lemma}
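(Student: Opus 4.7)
The plan is to reduce the statement to monotone functions via the Jordan decomposition of a BV function, and then approximate monotone functions directly by partitioning the range rather than the domain.

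First, recall that any $\varphi \in BV$ admits a Jordan decomposition $\varphi = \varphi_+ - \varphi_-$, with $\varphi_\pm : I \to \mathbb{R}$ bounded monotone increasing functions. Since $\mathcal{S}$ is a vector space and $\Vert \cdot \Vert_\infty$ is subadditive, it suffices to approximate each $\varphi_\pm$ uniformly by step functions to within $\epsilon/2$; the difference of the two approximants is then a step function approximating $\varphi$ to within $\epsilon$. So I may assume $\varphi$ is monotone increasing.

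For such $\varphi$, I would partition the range rather than the domain. Given $\epsilon > 0$, choose real numbers $\varphi(a) = y_0 < y_1 < \dots < y_n = \varphi(b)$ with $y_{i+1} - y_i < \epsilon$, and set $A_i := \varphi^{-1}([y_i, y_{i+1}))$ for $i < n-1$, with $A_{n-1} := \varphi^{-1}([y_{n-1}, y_n])$. The key point is that the preimage of an interval under a monotone function is itself a (possibly empty, possibly half-open) subinterval of $I$, so the $A_i$ form a finite partition of $I$ by subintervals. Then $\psi := \sum_{i=0}^{n-1} y_i \chi_{A_i}$ lies in $\mathcal{S}$, and by construction $|\varphi(x) - \psi(x)| < \epsilon$ for every $x \in I$, giving the desired uniform bound.

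The main subtlety — more a bookkeeping matter than a genuine obstacle — is that a BV function may have countably many jump discontinuities, so some intervals $A_i$ may be half-open or may degenerate to a single point. This is harmless because $\mathcal{S}$ was defined to contain $\chi_J$ for \emph{every} subinterval $J \subseteq I$, regardless of endpoint type. With this observation, the construction above produces the required uniform approximation by step functions, proving density.
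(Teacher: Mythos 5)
Your argument is correct, but it follows a different route from the paper. The paper does not construct the approximation at all: it simply recalls the classical fact that the uniform closure of the step functions is the space of \emph{regulated} functions (those with one-sided limits everywhere) and that every BV function is regulated. You instead give a self-contained proof: Jordan decomposition reduces to a monotone increasing $\varphi$, and then partitioning the \emph{range} into pieces of length $<\epsilon$ produces finitely many preimage intervals $A_i$ on which $\varphi$ oscillates by less than $\epsilon$, so the associated step function is an $\epsilon$-approximation in the sup norm. This is a perfectly valid and more elementary argument; it avoids both the regulatedness of BV functions and the compactness argument usually needed to approximate a general regulated function, at the cost of using monotonicity in an essential way (so it is specific to BV, whereas the paper's citation also identifies the exact uniform closure of $\mathcal{S}$, which is the extra information recorded in the remark following the lemma). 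Your endpoint bookkeeping is indeed harmless: even if one insists that ``subinterval'' means a nondegenerate closed interval, characteristic functions of half-open intervals and of singletons still lie in $\mathcal{S}$, e.g.\ $\chi_{\{p\}} = \chi_{[c,p]} + \chi_{[p,d]} - \chi_{[c,d]}$ and $\chi_{[c,d)} = \chi_{[c,d]} - \chi_{\{d\}}$, so the step function you build is in $\mathcal{S}$.
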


In fact, the uniform closure of the set of step functions is the space of \emph{regulated functions}, i.e. functions 
which admit both left and right limits everywhere, and functions of bounded variation are regulated.

Moreover, following \cite{Rugh}, we define the space $SBV(I)$ of \emph{step bounded variation functions} as the closure of the space of step functions \emph{with respect to the BV norm}. This is a smaller space than $BV(I)$. 

By Lemma \ref{L:dense}, we can extend the operator $\mathbb{E}_m$ to $\mathbb{E}_m : BV \to \mathbb{R}$, hence obtaining a
pairing $\langle \cdot, \cdot \rangle : BV \times \mathcal{M}(I) \to \mathbb{R}$,
$$\langle \varphi, m \rangle := \int \varphi \ dm.$$

\subsection{The transfer operator}

Let us now recall the spectral theory of transfer operators on the space BV. For further references, see e.g. \cite{Baladi-book}.

Let $\mathcal{L} : \textup{BV} \to \textup{BV}$ be the transfer operator, with potential $g \equiv 1$. That is,
$$(\mathcal{L} \varphi)(x) := \sum_{i = 1}^d \varphi (f_i^{-1} x) \chi_{f(I_i)}(x) $$
where $f_i^{-1} : f(I_i) \to I_i$ are the local inverses\footnote{Note that different sources (e.g. \cite{Baladi-Keller}, \cite{Rugh}) have slightly different definitions of $\mathcal{L}$, depending on considering the endpoints 
of $I_i$ or not. However, since the definitions only differ on finitely many points, such modifications induce the same operator on $BV/\mathcal{N}$, hence they turn out to have the same discrete spectrum by \cite[Proposition 3.4]{Baladi-book}.} of $f$. 

\begin{lemma}
For any interval $I$ and any metric $m$, we have 
$$\langle \mathcal{L} \chi_I, m \rangle = \langle \chi_I, f^\star m \rangle.$$
\end{lemma}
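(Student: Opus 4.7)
The identity is essentially the formal adjointness of $\mathcal{L}$ and $f^\star$ with respect to the pairing $\langle\cdot,\cdot\rangle$, and follows by unraveling both sides on the test function $\chi_J$ (to avoid conflict with the ambient interval $I$, I will rename the subinterval in the statement as $J$).

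The plan is to compute each side separately and see they both reduce to $\sum_{i=1}^d m(f(J \cap I_i))$. For the right-hand side this is immediate from the definition of the pullback operator: $\langle \chi_J, f^\star m\rangle = (f^\star m)(J) = \sum_{i=1}^d m(f(J\cap I_i))$ by the formula given just before Lemma \ref{L:semiconj}.

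For the left-hand side I would first show that
$$\mathcal{L}\chi_J(x) = \sum_{i=1}^d \chi_{f(J\cap I_i)}(x).$$
Indeed, by definition $\mathcal{L}\chi_J(x) = \sum_{i=1}^d \chi_J(f_i^{-1}x)\chi_{f(I_i)}(x)$, and since $f_i = f|_{I_i}$ is a monotone homeomorphism from $I_i$ onto $f(I_i)$, for $x\in f(I_i)$ we have $f_i^{-1}(x)\in J$ if and only if $x\in f_i(J\cap I_i) = f(J\cap I_i)$. Thus the $i$-th summand is exactly $\chi_{f(J\cap I_i)}$. Applying $\mathbb{E}_m$, which is linear on step functions, yields
$$\langle \mathcal{L}\chi_J, m\rangle = \sum_{i=1}^d m\bigl(f(J\cap I_i)\bigr),$$
matching the right-hand side.

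The only subtlety is that the local inverses $f_i^{-1}$ are defined up to behavior at the turning points, so the equality $\chi_J(f_i^{-1}x)\chi_{f(I_i)}(x) = \chi_{f(J\cap I_i)}(x)$ may fail at finitely many endpoints of the intervals $f(J\cap I_i)$; however, since $m$ is additive and non-atomic in the sense that single points contribute $0$ (a pair $[x,y]$ requires distinct points), this ambiguity does not affect the value of $\mathbb{E}_m$. This is consistent with the footnote in the statement of $\mathcal{L}$, which notes that different conventions at endpoints induce the same operator modulo negligible sets. I do not expect any serious obstacle: the whole proof is a bookkeeping exercise in the definitions, with the only minor point being the boundary-point issue just described.
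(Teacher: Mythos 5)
Your proposal is correct and follows essentially the same route as the paper: unwind the definition of $\mathcal{L}$ on $\chi_J$, observe that $(\chi_J \circ f_i^{-1})\chi_{f(I_i)} = \chi_{f(J\cap I_i)}$ since each $f_i$ is a homeomorphism of $I_i$ onto $f(I_i)$, and sum to recover $(f^\star m)(J)$. Your extra remark on the endpoint ambiguity of the local inverses is a fine addition, consistent with the footnote the paper relies on.
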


\begin{proof}
By definition we have 
\begin{align*}
\int \mathcal{L} \chi_I \ dm & = \sum_{i = 1}^d \int (\chi_I \circ f_i^{-1}) \chi_{f(I_i)} \ dm\\
& = \sum_{i = 1}^d \int \chi_{f(I \cap I_i)} \ dm \\
& = \sum_{i = 1}^d m(f(I \cap I_i)) \\
& = f^\star m(I).
\end{align*}
\end{proof}

As a corollary, using Lemma \ref{L:dense}, 
we obtain the duality relation 
$$\langle \mathcal{L} \varphi, m \rangle = \langle \varphi, f^\star m \rangle$$
for any $\varphi \in BV$, $m \in \mathcal{M}$. 
Moreover, by setting $m = m_0$ in the previous proof, we obtain
$$\textup{Var}_J (f^n) = \Vert \mathcal{L}^n \chi_J \Vert_{L^1}$$
where we consider the $L^1$ space with respect to Lebesgue measure. 

\subsection{Spectral decomposition}

Let $f : I \to I$ be a piecewise monotone map, and let $\lambda := e^{h_{top}(f)}$. 

We denote as $\ell(f, J)$ the number of \emph{laps}, i.e. the number of intervals of monotonicity, of the restriction of $f$ to $J$; 
we let $\ell(f) := \ell(f, I)$ the total number of laps. 

By \cite[Chapter III.5]{HK2}, there exists a measure $\mu$ on $I$ 
such that 
$$\int_I \mathcal{L}(\varphi) \ d \mu =  \lambda \int_I \varphi \ d \mu$$
for all bounded, measurable $\varphi$. Note that in general $\mu$ need not have full support. 

Following \cite{Baladi-Keller}, we have a spectral decomposition. 

\begin{theorem} \label{T:sp-dec-II}
Suppose $f : I \to I$ is a piecewise monotone map with $h_{top}(f) > 0$.
Let $\mathcal{L} : BV \to BV$ be the transfer operator. Then: 
\begin{enumerate}
\item the spectral radius of $\mathcal{L}$ equals $\lambda = e^{h_{top}(f)}$; 
\item there exists a spectral decomposition 
$$\mathcal{L} = \sum_{i = 1}^r \lambda_i P_i ( P_i + N_i) + P_{res} \mathcal{L}$$
where $\lambda = \lambda_1, \lambda_2, \dots, \lambda_r$ are the eigenvalues of modulus $\lambda$, and each $P_i$ is a spectral projector of finite rank.
Moreover, each $N_i$ is nilpotent, and $P_i N_i = N_i P_i = N_i$.
Further, there exists $\mu < \lambda$ and $c > 0$ such that 
$$\Vert P_{res} \mathcal{L}^n \Vert_{BV} \leq c \mu^n \qquad \textup{for any }n \geq 1.$$ 
\item
If $\varphi$ is an eigenvector with eigenvalue $\lambda$, then it is non-negative, and the measure $\nu$ defined by $d \nu = \varphi \ d \mu$ is invariant under $f$. 
\end{enumerate}
\end{theorem}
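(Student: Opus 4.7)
The plan is to deduce Theorem \ref{T:sp-dec-II} from the general framework of quasi-compact positive operators on ordered Banach spaces, following the strategy of Baladi--Keller. The four ingredients I would assemble are: a Lasota--Yorke type inequality on $BV$, the Ionescu-Tulcea--Marinescu--Hennion quasi-compactness theorem, an independent lower bound on the spectral radius via lap numbers, and a Krein--Rutman argument for positivity.

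For the Lasota--Yorke step I would first establish the one-step estimate $\textup{Var}(\mathcal{L}\varphi) \leq \textup{Var}(\varphi) + 2(d-1)\Vert \varphi \Vert_\infty$, which follows from the fact that $\mathcal{L}\varphi$ is obtained by gluing $d$ restrictions of $\varphi$ along the turning points of $f$. Iterating and keeping track of the boundary contributions, one gets a bound of the shape
$$\Vert \mathcal{L}^{n_0}\varphi \Vert_{BV} \leq C_1 \theta^{n_0} \Vert \varphi \Vert_{BV} + C_2 \Vert \varphi \Vert_{L^1}$$
for a suitable $n_0$ and some $\theta < \lambda$. Since the embedding $BV \hookrightarrow L^1$ is compact by Helly's selection theorem, Hennion's theorem then gives quasi-compactness: outside the disk of radius $\theta$ the spectrum consists of finitely many eigenvalues of finite multiplicity, each carrying a spectral projector $P_i$ and nilpotent part $N_i$ with $P_i N_i = N_i P_i = N_i$. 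Grouping the peripheral eigenvalues of modulus $\lambda$ yields the displayed decomposition in (2), with $\Vert P_{res}\mathcal{L}^n \Vert_{BV}$ decaying at any rate $\mu \in (\theta,\lambda)$.

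To identify the spectral radius as $e^{h_{top}(f)}$, I would exploit the identity $\textup{Var}_J(f^n) = \Vert \mathcal{L}^n \chi_J \Vert_{L^1}$ established in the previous subsection, combined with the Misiurewicz--Szlenk formula $h_{top}(f) = \lim_n \tfrac{1}{n} \log \ell(f^n)$. Since $f$ sends each lap onto an interval whose length is bounded below by a positive constant in at least one iterate, $\textup{Var}_I(f^n)$ is comparable to $\ell(f^n)$, giving $\rho(\mathcal{L}) \geq \lambda$; the Lasota--Yorke inequality provides the matching upper bound. For part (3), I would invoke Krein--Rutman on the quasi-compact positive operator $\mathcal{L}$ (which preserves the cone of nonnegative $BV$ functions) to obtain a nonnegative eigenvector $\varphi$ of eigenvalue $\lambda$. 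Invariance of $d\nu = \varphi \, d\mu$ then reduces to the computation
$$\int \psi\circ f \ d\nu = \int (\psi\circ f)\varphi \ d\mu = \lambda^{-1}\int \mathcal{L}\bigl((\psi\circ f)\varphi\bigr) \ d\mu = \lambda^{-1}\int \psi \cdot \mathcal{L}\varphi \ d\mu = \int \psi \ d\nu,$$
using the transfer identity $\mathcal{L}((\psi\circ f)\varphi)=\psi\,\mathcal{L}\varphi$ (immediate from the local inverse definition of $\mathcal{L}$) and the eigenequation for the reference measure $\mu$ supplied by \cite{HK2}.

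The hard part is the Lasota--Yorke inequality at a rate $\theta$ strictly smaller than $\lambda$. Unlike the uniformly expanding setting, where slopes $>1$ furnish automatic contraction in variation, piecewise monotone maps have no such mechanism, and the boundary contributions generated by the turning points of $f^n$ grow precisely at rate $\ell(f^n)\sim \lambda^n$ — exactly the rate one is trying to beat. To overcome this, I would pass to the Hofbauer tower $\ell^1(\mathcal{D})$ introduced in the previous section, where the dynamics becomes Markov and the variation estimates decouple branch-by-branch, allowing one to extract a gain from the fact that $\mathcal{L}$ is $L^1$-contracting after rescaling by $\lambda$. This tower detour is also the technical device that forces finiteness of the peripheral spectrum in part (2) and that aligns most naturally with the pullback-operator framework used throughout the rest of the paper.
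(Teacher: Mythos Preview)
The paper's own proof of this theorem is almost entirely by citation: part (2) is \cite[Theorem 1]{Baladi-Keller}, part (3) is \cite[Theorem 3.2(c)]{Baladi-book}, and for part (1) the only in-text argument is the sandwich
\[
\var{f^n}{I} = \langle \mathcal{L}^n 1, m_0\rangle \le \Vert \mathcal{L}^n 1\Vert_\infty \le \ell(f^n),
\]
combined with Misiurewicz--Szlenk, after invoking \cite[Theorem 3.2(c)]{Baladi-book} for the formula $R=\lim_n \Vert\mathcal{L}^n 1\Vert_\infty^{1/n}$. So you are not so much giving a different proof as attempting to reconstruct what lies behind those citations; that is a reasonable thing to do, and your Krein--Rutman argument for (3) and the invariance computation are fine.

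The gap is in your Lasota--Yorke step. From the one-step estimate $\textup{Var}(\mathcal{L}\varphi)\le \textup{Var}(\varphi)+2(d-1)\Vert\varphi\Vert_\infty$ you cannot obtain an inequality with an $L^1$ residual: iteration produces $\textup{Var}(\mathcal{L}^n\varphi)\le \textup{Var}(\varphi)+C\ell(f^n)\Vert\varphi\Vert_\infty$, and there is no mechanism (no expansion, no Jacobian) to trade $\Vert\varphi\Vert_\infty$ for $\Vert\varphi\Vert_{L^1}$. If instead you keep $\Vert\cdot\Vert_\infty$ as the weak norm, Hennion's theorem is unavailable because the $BV$ unit ball is not relatively compact in sup norm (Helly gives only $L^p$ compactness for $p<\infty$). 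This is exactly why Baladi--Keller and \cite{Baladi-book} do \emph{not} run the Ionescu-Tulcea--Marinescu machine in the form you describe: they bound the essential spectral radius of $\mathcal{L}$ on $BV$ (or on the quotient $BV/\mathcal{N}$) directly, showing it is at most the sup of the iterated weight, here $1$, and then use $h_{top}(f)>0$ to separate $\lambda>1$ from the essential spectrum. Your Hofbauer-tower detour does not rescue the $L^1$ inequality either; the tower (as in \cite{HK}) is used to match the discrete spectrum of $T$ on $\ell^1(\mathcal{D})$ with that of $\mathcal{L}$ on $BV$, not to manufacture a Lasota--Yorke bound with $L^1$ remainder. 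If you want a self-contained argument, replace the Hennion step by a direct essential-spectral-radius estimate \`a la \cite[Theorem~3.2]{Baladi-book}, and for (1) use the paper's sandwich with $\Vert\mathcal{L}^n 1\Vert_\infty$ rather than $\Vert\mathcal{L}^n\chi_J\Vert_{L^1}$.
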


\begin{proof}
Let $\mathcal{N}$ be the space of functions of bounded variation that vanish except on a countable set. 
By \cite[Proposition 3.4]{Baladi-book}, the spectrum on $BV$ and on $BV/\mathcal{N}$ coincide outside the disk of radius $1$. 
To prove (1), note that, by \cite[Theorem 3.2 (c)]{Baladi-book}, the spectral radius of $\mathcal{L}$ on $BV/\mathcal{N}$ is 
$$R := \lim_{n \to \infty} (\Vert \mathcal{L}^n 1 \Vert_\infty)^{1/n}$$
and let us note that 
$$\var{f^n}{I} = \langle \mathcal{L}^n 1, m_0 \rangle \leq \Vert \mathcal{L}^n 1 \Vert_\infty \leq \ell(f^n)$$
and by Misiurewicz-Szlenk \cite{MS} and the definition of topological entropy 
$$\lim_{n \to \infty} (\var{f^n}{I})^{1/n} = e^{h_{top}(f)} = \lim_{n \to \infty} (\ell(f^n))^{1/n}$$
hence 
$$R = e^{h_{top}(f)},$$
see also \cite[Theorem 3.3]{Baladi-book}.
Now, (2) is \cite[Theorem 1]{Baladi-Keller} and follows by quasicompactness of $\mathcal{L}$. 
Finally, (3) is \cite[Theorem 3.2 (c)]{Baladi-book}. 
\end{proof}

Similar decompositions are given in \cite{Ry}, \cite{HK} (but under the assumption that $\mu$ has full support, so there is no nilpotent part). 

For piecewise monotone maps, the peripheral eigenvalues of the transfer operator acting on $BV$ are in correspondence 
with the zeros of the kneading determinant. 

\begin{theorem} \label{T:eigen-knead}
Let $f$ be a piecewise monotone map with $h_{top}(f) > 0$. Then a complex number $t$ with 
$0 < |t| < 1$  
is a zero of the Milnor-Thurston determinant $D_f(t)$ if and only if $t^{-1}$ is an eigenvalue of the 
transfer operator $\mathcal{L}$ acting on the space $BV$. 
Moreover, the algebraic multiplicities are the same.
\end{theorem}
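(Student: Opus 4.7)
The plan is to match the zeros of $D_f(t)$ on the open unit disk with those of the Fredholm-type determinant
$$d(t) := \det\!\bigl(1 - t\, \mathcal{L}|_{BV/\mathcal{N}}\bigr).$$
First, by Theorem \ref{T:sp-dec-II} the operator $\mathcal{L}$ is quasicompact on $BV/\mathcal{N}$, with spectral radius $\lambda = e^{h_{top}(f)}$ and essential spectral radius bounded by some $\mu < \lambda$. Consequently $d(t)$ extends to an analytic function on $|t| < 1/\mu$, and standard spectral theory for quasicompact operators produces a bijection between zeros of $d(t)$ on $|t|<1$ (counted with order) and eigenvalues $t^{-1}$ of $\mathcal{L}$ of modulus strictly greater than $1$, with algebraic multiplicities equal to orders of vanishing.

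To link $D_f(t)$ with $d(t)$, I would use the Milnor--Thurston identity relating the kneading determinant to the lap-counting generating function $L(t) := \sum_{n \geq 0} \ell(f^n)\, t^n$: namely, $(1-t) L(t) D_f(t)$ equals a polynomial arising from the itineraries of the endpoints, which has no zero on $|t| < 1$. Thus the zeros of $D_f(t)$ in the open unit disk coincide, with multiplicity, with the poles of $L(t)$ there. On the other side, the duality identity $\textup{Var}_J(f^n) = \langle \mathcal{L}^n \chi_J, m_0\rangle$ established above, combined with the standard comparison $\ell(f^n) \asymp \textup{Var}_I(f^n)$, yields a meromorphic resolvent expression
$$L(t) = \bigl\langle (1 - t\mathcal{L})^{-1} 1,\ m_0\bigr\rangle + R(t),$$
with $R(t)$ holomorphic on $|t|<1$. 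Applying the spectral decomposition of Theorem \ref{T:sp-dec-II}, the poles of the resolvent term on $|t| < 1$ occur exactly at $t = 1/\lambda_i$ for eigenvalues $\lambda_i$ of $\mathcal{L}$ with $|\lambda_i|>1$, and the pole orders equal the sizes of the corresponding Jordan blocks. Combining these two expressions for $L(t)$ gives the claimed correspondence.

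The main obstacle is matching \emph{algebraic} multiplicities, not merely locating zeros: one must show that the pole order of $L(t)$ at $1/\lambda_i$ captures the full generalized eigenspace of $\lambda_i$, including the contribution of the nilpotent parts $N_i$ in Theorem \ref{T:sp-dec-II}. This is cleanest when done through the Hofbauer tower representation $T : \ell^1(\mathcal{D}) \to \ell^1(\mathcal{D})$ introduced above: by the semiconjugacy $p \circ f^\star = T \circ p$, the peripheral spectrum of $\mathcal{L}$ is inherited from that of $T$, while the kneading-matrix form of $D_f(t)$ can be recognized as the characteristic function of $T$ up to an entire nonvanishing factor. The detailed bookkeeping of multiplicities is exactly the content of the Baladi--Ruelle and Baladi--Keller theorems, which I would invoke as a black box for this last step rather than reprove; the theorem then follows by assembling the three ingredients above.
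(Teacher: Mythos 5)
Your final step --- invoking Baladi--Ruelle / Baladi--Keller as a black box --- is in fact the entirety of the paper's own proof: the paper simply cites Baladi--Ruelle's sharp determinants, or more directly Rugh's Theorem 3.1 (zeros of $D_f$ in the punctured disk correspond, with algebraic multiplicity, to eigenvalues of modulus $>1$ of $\mathcal{L}$ on $SBV$) together with Rugh's Theorem 3.3 (these coincide with the eigenvalues on $BV$). If that citation is admissible, you are done and the lap-counting/resolvent scaffolding is unnecessary. The problem is that the scaffolding itself, which is the only part you actually argue, does not hold up, and it is not a correct alternative route to the multiplicity statement.

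Concretely: (a) the step ``$\ell(f^n) \asymp \textup{Var}_I(f^n)$, hence $L(t) = \langle (1-t\mathcal{L})^{-1}1, m_0\rangle + R(t)$ with $R$ holomorphic on $|t|<1$'' is a non sequitur. Comparability of growth rates says nothing about the difference: unless all branches map onto the whole interval, $\ell(f^n) - \textup{Var}_I(f^n)$ itself typically grows like $\lambda^n$, so $R(t) = \sum_n (\ell(f^n)-\textup{Var}_I(f^n))t^n$ has radius of convergence $1/\lambda$ and uncontrolled singularities inside the unit disk; the poles of $L$ need not match those of the resolvent term. (b) The asserted Milnor--Thurston identity ``$(1-t)L(t)D_f(t)$ equals a polynomial with no zero in $|t|<1$'' is precisely the hard content, not a quotable fact: what Milnor--Thurston provide suffices to locate the smallest real zero of $D_f$ and the radius of convergence of $L$, but the full zero/pole correspondence with multiplicities throughout the disk is exactly what Baladi--Ruelle and Rugh prove (via sharp-trace determinants, not via $L(t)$). (c) Even granting the resolvent formula, the order of the pole of the scalar function $\langle (1-t\mathcal{L})^{-1}1, m_0\rangle$ at $t=1/\lambda_i$ is at most the index of nilpotency of $N_i$, can be strictly smaller (or the pole absent altogether, if $P_i(1)=0$ or $m_0$ annihilates the relevant range), and in any case the index is not the algebraic multiplicity; so pole orders of $L(t)$ cannot yield the ``same algebraic multiplicities'' claim you acknowledge as the main obstacle. (d) Finally, Theorem \ref{T:sp-dec-II} only bounds the residual spectrum by some $\mu<\lambda$, which may well exceed $1$; to account for all zeros with $0<|t|<1$ you need the essential spectral radius of $\mathcal{L}$ on $BV/\mathcal{N}$ to be at most $1$ (Baladi--Keller), which your sketch never establishes. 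In short: either cite Rugh/Baladi--Ruelle directly, as the paper does, or be prepared to reprove them; the intermediate lap-counting argument as written does not bridge the gap.
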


\begin{proof}
The statement follows from work of Baladi-Ruelle \cite{Baladi-Ruelle}, who also deal with non-constant weights (see also \cite{Baladi-notes}). 
In the constant weight case we are interested in, a direct way is to invoke \cite[Theorem 3.1]{Rugh}, whereby the zeros of the Milnor-Thurston determinant correspond to the eigenvalues of modulus $> 1$ of the transfer operator on the space $SBV(a,b)$ of step bounded variation functions; moreover, by \cite[Theorem 3.3]{Rugh}, these eigenvalues also correspond to the eigenvalues of the transfer operator acting on $BV(a, b)$. 
\end{proof}

\begin{remark}
If the partition $\bigsqcup_{i = 0}^{d-1} [c_i, c_{i+1})$ is generating, then by \cite[Theorem 2]{Baladi-Keller} the eigenvalues of $\mathcal{L}$ 
coincide with the poles of the zeta function $\zeta(t)$. Moreover, by \cite[Theorem 9.2]{MT}
$$D_f(t) = \frac{\chi(t)}{\widehat{\zeta}(t)},$$ 
where $\chi(t)$ is a cyclotomic polynomial and $\widehat{\zeta}(t)$ is the \emph{reduced} zeta function. 
Since there are no homtervals, the reduced zeta function and the usual zeta function $\zeta(t)$ are the same, possibly up to 
cyclotomic factors.
\end{remark}

\subsection{The topologically mixing case}

Recall a map $f : I \to I$ is \emph{topologically mixing} if for any nonempty, open sets $U, V$ there exists $n_0$ such that $f^{-n}(U) \cap V \neq \emptyset$
for any $n \geq n_0$.
In this case we have: 
 
\begin{proposition}[\cite{Baladi-book}, Proposition 3.6] \label{P:top-trans}
If $f$ is topologically transitive, then $\lambda = e^{h_{top}(f)}$ is a simple eigenvalue of $\mathcal{L}$. 
Moreover, if $f$ is topologically mixing, then $\mathcal{L}$ does not have any other eigenvalue of modulus $\lambda$.
\end{proposition}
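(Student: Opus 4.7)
The plan is to combine the spectral decomposition in Theorem~\ref{T:sp-dec-II} with the positivity of $\mathcal{L}$ and the dynamical hypotheses (transitivity, and later mixing), in the standard Perron--Frobenius--Ruelle spirit. For the simplicity of $\lambda$, I first invoke part (3) of Theorem~\ref{T:sp-dec-II} to produce a non-negative eigenfunction $\varphi$ with $\mathcal{L}\varphi = \lambda\varphi$, and observe that the zero set $Z := \{\varphi = 0\}$ (taken modulo $\mathcal{N}$, as in the proof of Theorem~\ref{T:sp-dec-II}) is forward-invariant under $f$, since $\mathcal{L}\varphi(x)=0$ forces $\varphi$ to vanish on the preimage $f^{-1}(x)$. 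Topological transitivity then forces $Z$ to be disjoint from the support of the measure $\mu$ coming from \cite[Chapter III.5]{HK2}, so $\varphi > 0$ on $\text{supp}\,\mu$. Given a second non-negative eigenfunction $\varphi'$, the constant $c := \inf_{\text{supp}\,\mu}(\varphi'/\varphi)$ is attained, and $\varphi' - c\varphi$ is a non-negative eigenfunction vanishing at a point of $\text{supp}\,\mu$; by the previous step it is identically zero there, proving $\dim\ker(\mathcal{L}-\lambda)=1$. I would also rule out the nilpotent piece $N_1$ in the peripheral decomposition: if $N_1\neq 0$, some generalized eigenvector $\psi$ would satisfy $\lambda^{-n}\mathcal{L}^n\psi = \psi + n\varphi + O(1)$, contradicting the uniform bound $\Vert\lambda^{-n}\mathcal{L}^n\Vert_{BV}\leq C$ that follows from $\lambda$ being the spectral radius.

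For the mixing half, let $\alpha$ be a peripheral eigenvalue with $|\alpha| = \lambda$ and $\mathcal{L}\psi = \alpha\psi$. Positivity of $\mathcal{L}$ gives $\mathcal{L}|\psi| \geq |\mathcal{L}\psi| = \lambda|\psi|$; iterating and using that $\lambda$ is the spectral radius, the inequality must saturate, so $\mathcal{L}|\psi| = \lambda|\psi|$. By the simplicity just proven, $|\psi| = t\varphi$ for some $t>0$. Writing $\psi = t\varphi\,\eta$ with $|\eta|=1$ on $\text{supp}\,\mu$, the equality $|\mathcal{L}\psi| = \mathcal{L}|\psi|$ forces all local inverse branches in the definition of $\mathcal{L}$ to pull $\eta$ back with the same phase, which amounts to the cocycle identity $\eta\circ f = (\alpha/\lambda)\,\eta$ on $\text{supp}\,\mu$. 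Since only finitely many peripheral eigenvalues exist by Theorem~\ref{T:sp-dec-II}(2), $\alpha/\lambda$ is a root of unity of some order $N$, and hence $\eta^N$ is an $f$-invariant function in $BV/\mathcal{N}$. A second application of the transitivity argument forces $\eta^N$ to be constant on $\text{supp}\,\mu$, so $\eta$ takes at most $N$ values and yields a decomposition of $\text{supp}\,\mu$ into $N$ clopen pieces cyclically permuted by $f$; topological mixing excludes this unless $N=1$, i.e.\ $\alpha=\lambda$.

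The main obstacle I expect is promoting the $\mu$-almost-everywhere identities (for $\varphi$, $\eta$, and the cocycle relation) into genuine topological statements on $\text{supp}\,\mu$, since the transfer operator is only well-defined on the quotient $BV/\mathcal{N}$ and each equality above a priori ignores a countable exceptional set. The fix is to fix canonical (say, upper semicontinuous) representatives and verify that the exceptional sets are themselves forward-invariant, so that transitivity and mixing may be applied; once this bookkeeping is carried out, the positivity and phase-cocycle arguments above close the proof in the classical manner.
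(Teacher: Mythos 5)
You are supplying a proof where the paper supplies only a citation: the paper's ``proof'' of Proposition~\ref{P:top-trans} is the reference to \cite[Proposition 3.6]{Baladi-book}, so there is no in-paper argument to match, and your Perron--Frobenius outline is the natural one to attempt. Judged on its own terms, however, it has a genuine gap at its central step. The claim that $c:=\inf_{\mathrm{supp}\,\mu}(\varphi'/\varphi)$ is attained is asserted, not proved, and for BV functions it can fail: eigenfunctions are not continuous, so compactness of $\mathrm{supp}\,\mu$ gives nothing, and a nonnegative BV function can be strictly positive at every point of a compact set while having infimum $0$ (change the value at one accumulation point of small values). Moreover, plain topological transitivity only yields that the positivity set $\{\varphi>0\}$ -- which is the forward-invariant set; your zero set $Z$ satisfies $f^{-1}(Z)\subseteq Z$, i.e.\ it is backward-invariant, not forward-invariant -- contains the forward orbit of an interval on which $\varphi$ is bounded below, hence is dense; it does not give $\varphi>0$ at every point of $\mathrm{supp}\,\mu$, let alone a uniform lower bound there. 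Without such a lower bound the sup/inf trick collapses: with $c=\sup\{t:\varphi'\ge t\varphi \text{ on } \mathrm{supp}\,\mu\}$ the function $\varphi'-c\varphi$ may be strictly positive on $\mathrm{supp}\,\mu$ with ratio-infimum $0$, and no vanishing point is produced. This uniform positivity is exactly the hard technical content; in the literature it is obtained via a locally-eventually-onto property, via the Markov extension, or by routing simplicity through Hofbauer's intrinsic ergodicity (uniqueness of the maximal measure), rather than by the naive pointwise argument.

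A second gap is the exclusion of the nilpotent part: the bound $\Vert\lambda^{-n}\mathcal{L}^n\Vert_{BV}\le C$ does \emph{not} follow from $\lambda$ being the spectral radius -- polynomial growth is precisely what a nontrivial Jordan block produces -- and Theorem~\ref{T:sp-dec-II} as stated explicitly allows nilpotent parts, so it cannot be quoted here. One needs an a priori estimate such as $\Vert\mathcal{L}^n 1\Vert_\infty\le \ell(f^n)$ together with control of the lap numbers, or the finer statement of Baladi--Keller, and ``simple eigenvalue'' does require this. The mixing half (saturation of $\mathcal{L}|\psi|\ge\lambda|\psi|$, the phase cocycle $\eta\circ f=(\alpha/\lambda)\eta$, root of unity, cyclic decomposition excluded by mixing) is the standard argument and is fine in outline, but it rests on the same unproven positivity statements and on the mod-$\mathcal{N}$ bookkeeping you defer to the end; that bookkeeping is not cosmetic, since it is exactly where the pointwise claims live. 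As written the proposal does not close; the economical fix is the paper's own, namely to quote \cite[Proposition 3.6]{Baladi-book} (or Hofbauer--Keller), or else to first prove uniform positivity of the $\lambda$-eigenfunction on the support of the maximal measure and the absence of peripheral Jordan blocks, and only then run your ratio and cocycle arguments.
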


Recall that the \emph{postcritical set} of $f$ is  $P_f := \overline{\bigcup_{i = 1}^{d-1} \bigcup_{n \geq 1} f^n(c_i)}$, and 
the \emph{core} of $f$, denoted $K_f$, is the convex hull of $P_f$. Note that $f(K_f) \subseteq K_f$.

\begin{lemma} \label{L:nocore}
Any eigenvector of $\mathcal{L}$ of eigenvalue $\xi$ with $|\xi| > 1$ vanishes on $I \setminus K_f$. 
\end{lemma}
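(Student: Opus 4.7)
The plan is to reduce the eigenvalue equation $\mathcal{L}\varphi = \xi\varphi$, restricted to $U := I \setminus K_f$, to a single-branch recursion of the form $\xi\varphi(x) = \varphi(\phi(x))$ with $\phi$ sending a tail of $U$ into itself; iteration together with boundedness of BV functions and the hypothesis $|\xi|>1$ will then force $\varphi|_U = 0$.

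The first step is to record two structural observations. Since $f(K_f) \subseteq K_f$, passing to complements gives $f^{-1}(U) \subseteq U$, so every preimage of a point of $U$ again lies in $U$. Moreover, for each interior index $2 \leq k \leq d-1$, both endpoints of $f(I_k)$ are the critical values $f(c_{k-1}), f(c_k) \in P_f \subseteq K_f$; since $K_f$ is a convex interval, this forces $f(I_k) \subseteq K_f$ and hence $f(I_k) \cap U = \emptyset$. Consequently, for $x \in U$ the eigenvalue equation collapses to
$$\xi\varphi(x) = \varphi(f_1^{-1}(x))\,\chi_{f(I_1)}(x) + \varphi(f_d^{-1}(x))\,\chi_{f(I_d)}(x),$$
i.e.\ only the two extreme branches are active on $U$.

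Second, I would analyze how these two branches interact with $U$. Writing $K_f = [\alpha,\beta]$, decompose $U = U_a \cup U_b$ with $U_a := [a,\alpha)$ and $U_b := (\beta,b]$ (either tail possibly empty). The condition $f(\partial I) \subseteq \partial I$ implies that each of $f(I_1), f(I_d)$ is an interval with one endpoint in $\{a,b\}$ and the other at a critical value in $K_f$, so each intersection $f(I_k) \cap U$ is entirely contained in one single tail. A short monotonicity check, using that $f(\alpha) \geq \alpha$ and $f(\beta) \leq \beta$ (which hold because $f(\alpha), f(\beta) \in K_f$), then shows that the corresponding local inverse $f_k^{-1}$ sends this tail into a sub-interval of a single tail of $U$. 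A case split on the pair $(f(c_0), f(c_d)) \in \{a, b\}^2$ now yields two scenarios: either (i) one tail (say $U_b$) does not meet either image $f(I_1), f(I_d)$, so $\mathcal{L}\varphi$ vanishes on $U_b$ and forces $\varphi|_{U_b} = 0$, after which the remaining equation on $U_a$ reduces to $\xi\varphi = \varphi\circ\phi$ for the unique surviving branch $\phi$ with $\phi(U_a) \subseteq U_a$ (the other branch lands in $U_b$ where $\varphi$ is already zero); or (ii) each tail is hit by exactly one active branch, yielding independent single-branch recursions $\xi\varphi|_{U_\bullet} = \varphi \circ \phi$ with $\phi(U_\bullet) \subseteq U_\bullet$.

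In either scenario, I would iterate the surviving recursion to get $\varphi(x) = \xi^{-n}\varphi(\phi^n(x))$; since any $\varphi \in BV$ is bounded and $|\xi|>1$, letting $n \to \infty$ yields $\varphi(x) = 0$. The main obstacle is the bookkeeping of the geometric step: one must verify, in each configuration of boundary images and branch monotonicities, that the local inverse really sends its source tail back into a single tail of $U$ rather than escaping into $K_f$. This reduces to the elementary inequalities $f_1^{-1}(\alpha) \leq \alpha$ and $f_d^{-1}(\beta) \geq \beta$ (with analogs at the opposite endpoints), which follow from $f(\alpha) \geq \alpha$ and $f(\beta) \leq \beta$ by monotonicity of the local inverses, but need to be checked in each case.
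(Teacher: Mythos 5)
Your proposal is, at bottom, the paper's own argument: the text observes that for $x \in I \setminus K_f$ only the two extreme branches can produce preimages (the interior branch images $f(I_k)$ have both endpoints at critical values, hence lie in the convex set $K_f$), that such preimages again lie off the core since $f(K_f) \subseteq K_f$, and that consequently $f^{-n}(x)$ has at most two elements for every $n$, giving $|\xi|^n |\varphi(x)| = |\mathcal{L}^n \varphi(x)| \leq 2 \sup_I |\varphi| $ and so $\varphi(x) = 0$; your branch-by-branch reduction is the same mechanism made explicit. One sub-claim in your scenario (ii) is however false as stated: when $f(a) = b$ and $f(b) = a$, each tail is indeed hit by exactly one active branch, but that branch's local inverse carries $U_a$ into $U_b$ and $U_b$ into $U_a$, so $\phi(U_\bullet) \subseteq U_\bullet$ fails; this is harmless, because iterating the (still single-valued) recursion, or passing to the two-step composition which does preserve each tail, again yields $|\varphi(x)| \leq |\xi|^{-n} \sup_I |\varphi| \to 0$. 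Finally, the geometric bookkeeping you flag as the main obstacle is simpler than the endpoint inequalities you propose (your derivation of $f_1^{-1}(\alpha) \leq \alpha$ from $f(\alpha) \geq \alpha$ tacitly assumes $\alpha \in I_1$, which may fail): since $f(K_f) \subseteq K_f$, the set $f_1^{-1}\bigl(f(I_1) \cap U\bigr)$ is a connected subset of $U$ whose closure contains $a$ (the endpoint $f(a) \in \{a,b\}$ of $f(I_1)$ pulls back to $a$), hence it lies in $U_a$; symmetrically $f_d^{-1}$ always lands in $U_b$, which settles all configurations at once and also recovers the paper's count of at most two $n$-fold preimages, one per tail.
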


\begin{proof}
Note that $I \setminus K_f$ has two connected components, let us call them $I^+$ and $I^-$; moreover, $f$ is injective on each of them, and  
$f^{-1}(I^+) \subseteq I^+$ and $f^{-1}(I^-) \subseteq I^+$. 
Thus, for any $x \in I \setminus K_f$ and for any $n \geq 1$, the set $f^{-n}(x)$ contains at most two elements. 
Hence, if $\varphi$ is an eigenvector of $\mathcal{L}$ of eigenvalue $\xi$, we have for any $x \in I \setminus K_f$, 
$$|\xi|^n |\varphi(x)| = |\mathcal{L}^n \varphi(x)| \leq 2 \sup_I |\varphi|$$
for any $n$, so $\varphi(x) = 0$. 
\end{proof}

We say $f$ is \emph{topologically mixing on the core} if the restriction $f \vert_{K_f}$ is topologically mixing. 
Moreover, $f$ is \emph{locally eventually onto the core} if for any closed interval $J \subseteq K_f$ with nonempty interior
there exists $n > 0$ such that $f^n(J) = K_f$. This implies topologically mixing on the core.

As a consequence of Proposition \ref{P:top-trans} and Lemma \ref{L:nocore}, we obtain the following (well-known) spectral decomposition: 

\begin{theorem} \label{T:mixing-spectrum}
Suppose $f : I \to I$ is a piecewise monotone map with $h_{top}(f) > 0$ and topologically mixing on the core. 
Let $\mathcal{L} : BV \to BV$ be the transfer operator. Then: 
\begin{enumerate}
\item the spectral radius of $\mathcal{L}$ equals $\lambda = e^{h_{top}(f)}$, and 
there is a unique eigenvector of eigenvalue $\lambda$; 
\item the rest of the spectrum is contained in $\{ z \ : \ |z| \leq \rho \}$ for some $\rho < \lambda$.
\end{enumerate}
\end{theorem}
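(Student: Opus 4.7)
The plan is to combine the three preceding ingredients: the spectral decomposition from Theorem \ref{T:sp-dec-II}, the simplicity statement from Proposition \ref{P:top-trans}, and the ``support on the core'' Lemma \ref{L:nocore}. The subtlety is that topological mixing is assumed only on $K_f$, so Proposition \ref{P:top-trans} cannot be applied to $\mathcal{L}$ directly on $BV(I)$; Lemma \ref{L:nocore} will serve precisely to reduce the peripheral spectral analysis to the core.

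Concretely, $h_{top}(f) > 0$ gives $\lambda > 1$, so Theorem \ref{T:sp-dec-II}(1) already identifies the spectral radius of $\mathcal{L}$ with $\lambda$. For the peripheral eigenvalues, I would introduce the transfer operator $\mathcal{L}_c$ of the restricted piecewise monotone map $f|_{K_f} : K_f \to K_f$ acting on $BV(K_f)$; note that $f(K_f) \subseteq K_f$ and $h_{top}(f|_{K_f}) = h_{top}(f) = \log \lambda$, since $I \setminus K_f$ carries only trivial (injective) dynamics by the proof of Lemma \ref{L:nocore}. That lemma tells us that every eigenvector of $\mathcal{L}$ with eigenvalue of modulus $\lambda > 1$ vanishes on $I \setminus K_f$, so restricting to $K_f$ yields an eigenvector of $\mathcal{L}_c$ with the same eigenvalue. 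Conversely, a direct computation shows that extending an eigenvector of $\mathcal{L}_c$ by zero outside $K_f$ produces an eigenvector of $\mathcal{L}$, because the local inverses involved in the definition of $\mathcal{L}$ outside $K_f$ take values outside $K_f$ and therefore feed zeros back. Together, these two directions give a bijection, preserving algebraic multiplicities, between the peripheral spectra of $\mathcal{L}$ and $\mathcal{L}_c$.

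Since $f|_{K_f}$ is topologically mixing by hypothesis, Proposition \ref{P:top-trans} applied to it yields that $\lambda$ is a simple eigenvalue of $\mathcal{L}_c$ and that no other eigenvalue of $\mathcal{L}_c$ has modulus $\lambda$. Transferring back through the bijection, the same statement holds for $\mathcal{L}$, which proves (1). For (2), we feed this information into Theorem \ref{T:sp-dec-II}(2): since there is a unique peripheral eigenvalue $\lambda$ with a one-dimensional eigenspace, the nilpotent part $N_1$ vanishes and the decomposition collapses to $\mathcal{L} = \lambda P_1 + P_{res} \mathcal{L}$ with $\Vert P_{res} \mathcal{L}^n \Vert_{BV} \leq c \mu^n$ for some $\mu < \lambda$. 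Setting $\rho := \mu$ then places the rest of the spectrum inside $\{|z| \leq \rho\}$.

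The main obstacle is the reduction to the core: while Lemma \ref{L:nocore} cleanly supplies the restriction direction, the extension-by-zero direction requires careful accounting of the monotonicity intervals of $f|_{K_f}$ versus those of $f$, together with the invariance $f(K_f) \subseteq K_f$, to verify that the transfer-operator identity on $BV(K_f)$ lifts to one on $BV(I)$ and that multiplicities are genuinely preserved. Once this bookkeeping is done, the two invocations of the quoted spectral results assemble the theorem.
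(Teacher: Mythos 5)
Your proposal is correct and follows essentially the same route as the paper, which states this theorem precisely as a consequence of Proposition \ref{P:top-trans} and Lemma \ref{L:nocore} (together with the decomposition of Theorem \ref{T:sp-dec-II}); you have simply spelled out the reduction of the peripheral spectrum to the core that the paper leaves implicit. Your handling of the one nontrivial detail --- that extension by zero of an eigenvector of the core transfer operator is an eigenvector of $\mathcal{L}$, because forward invariance of $K_f$ forces all preimages of points outside the core to lie outside the core --- is the right verification.
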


Eigenvectors of $\mathcal{L}$ are related to maximal measures for $f$. Consider the map $i : \ell^1(\mathcal{D}) \to BV$ given by 
$$i(u) := \sum_{J \in \mathcal{D}} u_J \chi_J.$$
Then one has the semiconjugacy $\mathcal{L}(i(u)) = i(T(u))$. Moreover:

\begin{theorem}[\cite{HK}, Theorem 3]
A complex number $\lambda$ with $|\lambda| > \rho_{ess}$ is an eigenvalue of the transfer operator on $BV$ if and only if it is an eigenvalue of the adjacency operator on $\ell^1(\mathcal{D})$. 
\end{theorem}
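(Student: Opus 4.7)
My plan is to leverage the bounded semiconjugacy $i : \ell^1(\mathcal{D}) \to BV$ satisfying $\mathcal{L} \circ i = i \circ T$, already established above. First I would verify that $i$ is bounded: since $\|\chi_J\|_{BV} \leq 3$ uniformly in $J$, one has $\|i(u)\|_{BV} \leq 3\|u\|_{\ell^1}$. Both directions of the claimed equivalence will use this intertwining together with the spectral decomposition from Theorem \ref{T:sp-dec-II}.

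For the direction $T \Rightarrow \mathcal{L}$, starting from a $T$-eigenvector $u \neq 0$ with $Tu = \lambda u$, the function $\varphi := i(u) \in BV$ automatically satisfies $\mathcal{L}\varphi = \lambda \varphi$. The only subtlety is to check that $\varphi$ is nonzero modulo the subspace $\mathcal{N}$ of $BV$ functions vanishing outside a countable set, which reduces to showing $u \notin \ker(i)$. The subspace $\ker(i) \subset \ell^1(\mathcal{D})$ is $T$-invariant, and I would argue that the spectral radius of $T|_{\ker(i)}$ is at most $\rho_{ess}$: elements of $\ker(i)$ are generated by the ``inclusion--exclusion'' relations $e_J - \sum_k e_{J_k}$ corresponding to decompositions $J = \bigsqcup_k J_k$ within the tower, on which $T$ essentially mimics the non-expansive part of the dynamics. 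Hence $|\lambda| > \rho_{ess}$ forces $u \notin \ker(i)$, and $\varphi$ is a genuine eigenvector.

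For the direction $\mathcal{L} \Rightarrow T$, given $\varphi \in BV$ with $\mathcal{L}\varphi = \lambda \varphi$ and $|\lambda| > \rho_{ess}$, I would show that $\varphi$ lies in the closure of $\mathrm{Image}(i)$ and lift it to an eigenvector in $\ell^1(\mathcal{D})$. The iteration $\varphi = \lambda^{-n} \mathcal{L}^n \varphi$ constrains the jumps of $\varphi$ to lie in the postcritical set $P_f$, which coincides with the set of endpoints of intervals in $\mathcal{D}$; this makes $\varphi$ a formal expansion $\sum_J u_J \chi_J$ with $J \in \mathcal{D}$. The crux is to verify $\sum_J |u_J| < \infty$. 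I would extract this from the spectral decomposition of Theorem \ref{T:sp-dec-II} by comparing $\lambda^{-n} \mathcal{L}^n$ against its projector onto the $\lambda$-generalized eigenspace: on the complementary spectral subspace the operator decays geometrically at rate $(\rho_{ess}/|\lambda|)^n < 1$, and this decay translates into $\ell^1$-summability of the Hofbauer coefficients. Once $u \in \ell^1(\mathcal{D})$ is constructed, the identity $Tu = \lambda u$ follows from $\mathcal{L} i = i T$ and the injectivity established in the easy direction.

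The main obstacle is precisely this $\ell^1$-summability estimate in the hard direction: the Hofbauer tower may contain infinitely many arbitrarily thin intervals, so no purely combinatorial estimate suffices, and the quasicompactness of $\mathcal{L}$ on $BV$ encoded in the strict inequality $|\lambda| > \rho_{ess}$ is unavoidable. Equality of algebraic multiplicities, should it be desired, would follow by applying the same argument to generalized eigenspaces and noting that $i$ induces an isomorphism between them.
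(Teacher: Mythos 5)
This statement is not proved in the paper: it is quoted as an external result from Hofbauer--Keller [\cite{HK}, Theorem 3], so your proposal has to be judged as a reconstruction of that argument rather than against an internal proof. Its general shape (push $T$-eigenvectors down through the intertwining $i$, lift $\mathcal{L}$-eigenfunctions up to the tower) is reasonable, but both directions have genuine gaps at exactly the points where the real work lies. In the easy direction, the claim that the spectral radius of $T$ restricted to $\ker i$ (more precisely, to the kernel of $i$ followed by the quotient $BV \to BV/\mathcal{N}$) is at most $\rho_{ess}$ is supported only by a heuristic about ``inclusion--exclusion relations''; it is not obvious, it is not proved, and establishing it is of essentially the same nature and difficulty as the theorem itself, so the easy direction is not actually closed. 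In the hard direction there are two unjustified steps: first, a $BV$ eigenfunction whose jump points lie in the countable set of endpoints of elements of $\mathcal{D}$ need not be, even modulo $\mathcal{N}$, of the form $\sum_{J \in \mathcal{D}} u_J \chi_J$ --- it may have a nontrivial continuous part, and nothing in your argument rules this out (this is where the expansion coming from $|\lambda|>\rho_{ess}$, or the structure of the Markov extension, must be used quantitatively); second, the $\ell^1$-summability of the coefficients, which you yourself identify as the crux, is not derived: geometric decay of $\Vert P_{res}\mathcal{L}^n \Vert_{BV}$ is an operator-norm estimate on $BV$, and no mechanism is given for converting it into control of a sum indexed by the infinitely many, possibly arbitrarily short, intervals of $\mathcal{D}$.

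The closing remark that equality of algebraic multiplicities follows because ``$i$ induces an isomorphism between the generalized eigenspaces'' likewise assumes what needs to be proved (note also that the statement as quoted in the paper does not assert equality of multiplicities). A complete argument along your lines would have to either follow Hofbauer--Keller's original analysis, which constructs the lift of an eigenfunction explicitly by examining $\lambda^{-n}\mathcal{L}^n\varphi$ level by level on the Markov extension and extracts summability from the variation of $\varphi$ together with the expansion factor $|\lambda|^{-n}$, or replace the lifting step by an argument identifying both spectra with the same set of zeros or poles of an appropriate determinant or zeta function, in the spirit of Theorem \ref{T:eigen-knead}. As it stands, the proposal is an outline with the two decisive estimates missing.
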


\subsection{Unimodal maps}
Let us now assume that $f$ is a unimodal map, i.e. $d = 2$. We shall normalize $f$ so that $I = [0, 1]$ and $f(0) = f(1) = 0$. 
We denote as $c$ the (unique) critical point of $f$.  The \emph{core} of a unimodal map $f$ is the interval $K_f := [f^2(c), f(c)]$.

In the unimodal case, the following dichotomy is very useful. It is inspired by \cite[Proposition 2.5.5]{vS}.

\begin{definition}
We call an interval $J$ \emph{fast} if there exists $k \geq 0$ such that both $f^{k}(J)$ and $f^{k+1}(J)$ contain $c$.  
Otherwise, we call $J$ a \emph{slow interval}. 
\end{definition}

Clearly, if $J$ is slow, then $f(J)$ is also slow; similarly, if $J$ is fast and $f(J') = J$, then $J'$ is also fast. 

\begin{lemma}
Let $f : I \to I$ be a unimodal map, and let $J \subseteq I$ be a closed interval. Then: 
\begin{enumerate}
\item If $J$ is fast, there exists $n$ such that $f^n(J) = [f(c), f^2(c)]$; 
\item If $J$ is slow, we have 
$$\textup{Var}_J f^n \leq \ell(f^n, J) \leq 2^{n/2} \qquad \textup{for any }n \geq 1.$$ 
\end{enumerate}
\end{lemma}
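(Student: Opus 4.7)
The plan is to track how the iterates $f^k(J)$ interact with the critical point $c$. For part (1), let $k \geq 0$ be the minimal index for which both $f^k(J) \ni c$ and $f^{k+1}(J) \ni c$. Since $c \in f^k(J)$, the maximum of $f$ on $f^k(J)$ is $f(c)$, so $f^{k+1}(J)$ has the form $[a, f(c)]$ for some $a \in I$; the hypothesis $c \in f^{k+1}(J)$ forces $a \leq c$, and hence $f^{k+1}(J) \supseteq [c, f(c)]$. Applying $f$ once more and using that $f$ is monotonically decreasing on $[c, 1]$, we obtain
\[ f^{k+2}(J) \supseteq f([c, f(c)]) = [f^2(c), f(c)] = K_f. \]
Combined with the forward-invariance $f(K_f) \subseteq K_f$, this gives $K_f \subseteq f^n(J)$ for all $n \geq k+2$; in the setting where $J \subseteq K_f$, the reverse inclusion also holds, yielding the claimed equality.

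For part (2), set $\ell_k := \ell(f^k, J)$, and let $a_k = 1$ if $c \in f^k(J)$ and $a_k = 0$ otherwise. Given a lap $L$ of $f^k|_J$, the map $f^{k+1}|_L = f \circ (f^k|_L)$ is monotone precisely when $c$ lies outside the interior of $f^k(L)$; otherwise $L$ splits into two laps of $f^{k+1}|_J$. Thus
\[ \ell_{k+1} \;\leq\; \ell_k + \#\{L : c \in f^k(L)\} \;\leq\; (1 + a_k)\,\ell_k, \]
since $a_k = 0$ forbids any splitting. Iterating gives $\ell_n \leq 2^{\sum_{k=0}^{n-1} a_k}$. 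The slow hypothesis says exactly that the sequence $(a_k)$ contains no two consecutive $1$'s, so the exponent is at most $\lceil n/2 \rceil$, giving $\ell_n \leq 2^{\lceil n/2 \rceil}$; this matches the stated bound $2^{n/2}$ up to a factor of at most $\sqrt{2}$. The variation bound $\textup{Var}_J f^n \leq \ell_n$ is immediate: on each of the $\ell_n$ monotone pieces the variation of $f^n$ is at most $1$, since $f^n$ takes values in $I = [0,1]$.

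The main obstacle is part (1), where the displayed argument gives only $f^n(J) \supseteq K_f$; upgrading to the stated equality relies on $f^n(J) \subseteq K_f$, which is automatic when $J \subseteq K_f$. The conceptual core of part (2) is the lap-doubling dichotomy: doublings occur exactly when the current image contains $c$, and the slow hypothesis prevents two consecutive doublings, halving the effective growth rate.
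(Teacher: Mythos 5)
Your proposal is correct and follows essentially the same route as the paper: for fast intervals you show $f^{k+1}(J)\supseteq[c,f(c)]$ and hence $f^{k+2}(J)\supseteq[f^2(c),f(c)]$, exactly as in the paper, and for slow intervals your per-step splitting count with no two consecutive doublings is just a reformulation of the paper's observation that $f^2$ has degree at most $2$ on $f^n(J)$, giving $\ell(f^{n+2},J)\le 2\,\ell(f^n,J)$. The two caveats you flag are also present in the paper's own argument, which likewise only establishes containment of the core in (1) (equality needs $J\subseteq K_f$, and only the containment, resp.\ the $\asymp 2^{n/2}$ growth bound up to a bounded factor, is actually used later in the proof of Theorem \ref{T:unimodal-conv}).
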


\begin{proof}
An interval $J$ is fast if there exists $k \geq 0$ such that $f^k(J)$ and $f^{k+1}(J)$ both contain the turning point. 
In that case, $f^{k+2}(J)$ contains $[f(c), f^2(c)]$, which is the core of the interval. 
If $J$ is slow, then for any $n$ the degree of $f^2$ restricted to $f^n(J)$ is at most $2$, hence 
$$\ell(f^{n+2}, J) \leq \ell(f^2, f^n(J)) \ell(f^n, J) \leq 2 \ell(f^n, J),$$
which by induction yields the claim.
\end{proof}

Note that $J$ is fast if and only if 
$$\liminf_{n \to \infty} \frac{\var{f^n}{J}}{\lambda^n} > 0.$$
Hence if $J_1, J_2$ are slow intervals with non-empty intersection, the union $J_1 \cup J_2$ is also slow. 

\bigskip

For unimodal maps with high entropy, the spectral decomposition has a simpler form.

\begin{theorem} \label{T:unispectrum}
Suppose $f : I \to I$ is a unimodal map with $h_{top}(f) > \frac{1}{2} \log 2$.
Let $\mathcal{L} : BV \to BV$ be the transfer operator. Then: 
\begin{enumerate}
\item the spectral radius of $\mathcal{L}$ equals $\lambda = e^{h_{top}(f)}$, and 
there is a unique eigenvector of eigenvalue $\lambda$; 
\item the rest of the spectrum is contained in $\{ z \ : \ |z| \leq \rho \}$ for some $\rho < \lambda$.
\end{enumerate}
\end{theorem}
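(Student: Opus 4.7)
The plan is to reduce Theorem \ref{T:unispectrum} directly to Theorem \ref{T:mixing-spectrum}, which already gives both conclusions (1) and (2) for any piecewise monotone map topologically mixing on its core. Thus the entire task becomes: show that a unimodal $f$ with $h_{top}(f) > \tfrac{1}{2}\log 2$ is topologically mixing on $K_f$.

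To establish topological mixing on the core, I would argue by contrapositive using the classical structure theorem for continuous unimodal maps (Milnor--Thurston; see also de Melo--van Strien). If $f$ is not topologically mixing on $K_f$, then $f$ is $n$-renormalizable for some $n \geq 2$: there is an interval $J \ni c$ such that $J, f(J), \dots, f^{n-1}(J)$ have pairwise disjoint interiors, $f^n(J) \subseteq J$, and $f^n|_J$ is again a unimodal map. Since the only turning point of $f$ is $c$ and it lies inside the cycle, the complement $I \setminus \bigcup_{i=0}^{n-1} f^i(J)$ consists of monotone branches carrying no entropy, yielding the renormalization identity $h_{top}(f) = \tfrac{1}{n} h_{top}(f^n|_J)$. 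Combined with the universal bound $h_{top}(g) \leq \log 2$ for a degree-$2$ map (Misiurewicz--Szlenk), this gives
$$h_{top}(f) \leq \frac{\log 2}{n} \leq \frac{\log 2}{2},$$
contradicting the hypothesis. Hence $f$ is topologically mixing on $K_f$, and Theorem \ref{T:mixing-spectrum} delivers both conclusions.

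The main obstacle is the reverse inequality in $h_{top}(f) = \tfrac{1}{n} h_{top}(f^n|_J)$: the $\geq$ direction follows immediately from $h_{top}(f^n) = n\, h_{top}(f)$ and restriction monotonicity, but $\leq$ requires knowing that no entropy escapes into the monotone branches outside the cycle, a classical but nontrivial fact in one-dimensional dynamics. An alternative route closer in spirit to the fast/slow lemma just proved would be the spectral observation that any peripheral eigenvector of $\mathcal{L}$ lives in a finite-dimensional invariant subspace on which iterates grow at rate $\lambda$ in $L^1$; for any slow interval $J$ one has $\var{f^n}{J} = \Vert \mathcal{L}^n \chi_J \Vert_{L^1} \leq 2^{n/2}$, and since $h_{top}(f) > \tfrac{1}{2}\log 2$ is equivalent to $\lambda > \sqrt{2}$, the peripheral projection of $\chi_J$ must vanish for every slow $J$ --- which, together with a careful analysis of which intervals are slow in the non-mixing case, would again isolate $\lambda$ as the only peripheral eigenvalue and bypass explicit appeal to the renormalization dichotomy.
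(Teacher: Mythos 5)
Your reduction hinges on the claim that $h_{top}(f) > \tfrac{1}{2}\log 2$ forces $f$ to be topologically mixing on its core, and that claim is false. A concrete counterexample is the real quadratic map with a superattracting orbit of period $3$ (the center of the period-$3$ window): its topological entropy is $\log\frac{1+\sqrt{5}}{2} \approx 0.481 > \tfrac{1}{2}\log 2 \approx 0.347$, it is renormalizable with $n = 3$, and it is \emph{not} topologically mixing on $K_f$ (open sets inside the cycle $J \cup f(J) \cup f^2(J)$ return to $J$ only at times divisible by $3$, and open sets in the attracting basin never spread). This example also breaks your renormalization identity: $h_{top}(f) = \tfrac{1}{n}h_{top}(f^n|_J)$ fails for $n \geq 3$, because the complement of the cycle of renormalization intervals is \emph{not} ``monotone branches carrying no entropy'' --- in the period-$3$ window essentially all of the entropy lives on a horseshoe outside the cycle, which is exactly why the entropy exceeds $\tfrac{1}{n}\log 2$. (The dichotomy ``not mixing on the core $\Rightarrow$ renormalizable'' is also unavailable in the paper's generality of merely continuous unimodal maps, where wandering intervals and attracting basins occur; but the quadratic example above already defeats the strategy even in the smooth setting.) Consequently Theorem \ref{T:mixing-spectrum} simply does not apply under the stated hypothesis, and your second, ``spectral'' sketch does not repair this: the issue to be excluded is peripheral spectrum coming from a periodic cycle of intervals containing the critical orbit, and knowing that $P(\chi_J) = 0$ for slow $J$ does not by itself rule this out.

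What the paper does instead is designed precisely to handle this renormalizable-but-high-entropy situation. It passes through Theorem \ref{T:eigen-knead} (eigenvalues of $\mathcal{L}$ on $BV$ versus zeros of the kneading determinant) and Proposition \ref{P:no-periph}: one semiconjugates $f$ to its constant-slope model $g$ via Milnor--Thurston, uses Bowen's result that slope $\lambda > \sqrt{2}$ implies $g$ is locally eventually onto its core (so $D_g$ has no extra peripheral zeros), and then, in the case where the critical orbit of $f$ enters the collapsed fiber $L = \pi^{-1}(\widetilde{c})$ with first return time $p \geq 2$, invokes the factorization $D_f(t) = \widetilde{D}_g(t)\,D_h(t^p)$ with $h$ the unimodal return map; since every zero of $D_h$ has modulus $\geq \tfrac12$, the zeros of $D_h(t^p)$ have modulus $\geq 2^{-1/2} > \lambda^{-1}$, so they contribute no peripheral spectrum. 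The correct conclusion is thus only mixing of the measure of maximal entropy (Corollary \ref{C:mixing}), not topological mixing on the core, and your argument would need to be rebuilt along these lines (or restricted to the non-renormalizable, no-wandering-interval case) to close the gap.
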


We shall prove the spectral properties of unimodal maps by relating them to the zeros of the kneading determinant. 

\subsection*{Kneading theory}
Let us recall the definition of kneading determinant from \cite{MT}. 
In order to capture the symbolic dynamics of 
$f$, one defines the \emph{address} of a point $x \neq c$ as 
$$A(x)  := \left\{ \begin{array}{ll} +1 & \textup{if }  x \in [0, c) \\ 
-1 & \textup{if } x \in (c, 1]. 
\end{array} \right.$$
The reason for this definition is to capture the monotonicity of $x$, i.e. $A(x) = +1$ iff $f$ is increasing in a neighbourhood of $x$. 
The \emph{kneading sequence} of $f$ is then defined as the sequence of addresses of the iterates of the turning point; namely, 
for any $k \geq 1$ set, if $f^k(c) \neq c$, 
$$\epsilon_k := A(f^k(c))$$
while, if $f^k(c) = c$, then set 
$$\epsilon_k := \lim_{x \to c} A(f^k(x))$$
which is still well-defined as $f$ ``folds" a neighbourhood of $c$. Finally, one defines 
$\eta_k := \prod_{j = 1}^k \epsilon_j$.
Then the \emph{kneading series} associated to $f$ is the power series 
$$D_f(t) := 1 + \sum_{k = 1}^\infty  \eta_{k} t^k.$$
Note that the coefficients of $D_f(t)$ are uniformly bounded, hence the power series 
defines a holomorphic function in the unit disk $\{ t \in \mathbb{C} \ : \ |t| < 1\}$. 

\begin{proposition} \label{P:no-periph}
Let $f$ be a unimodal map with $h_{top}(f) = \log \lambda> \frac{1}{2} \log 2$. Then $\lambda^{-1}$ is a simple zero of the kneading 
determinant $D_f(t)$,
and there are no other zeros on the closed disk $\{ z \ : \ |z| \leq \lambda^{-1} \}$. 
\end{proposition}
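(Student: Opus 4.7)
The plan is to translate the statement about zeros of $D_f(t)$ into the spectral language of the transfer operator via Theorem \ref{T:eigen-knead}, reduce that spectral statement to topological mixing on the core via Theorem \ref{T:mixing-spectrum}, and prove topological mixing using the fast/slow dichotomy together with the hypothesis $h_{top}(f) > \frac{1}{2}\log 2$.

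By Theorem \ref{T:eigen-knead}, a complex number $t$ with $0 < |t| < 1$ is a zero of $D_f(t)$ of algebraic multiplicity $k$ if and only if $t^{-1}$ is an eigenvalue of $\mathcal{L}$ on $BV$ of the same algebraic multiplicity. Since $D_f(0) = 1 \neq 0$ and $\lambda^{-1} < 1$ (as $h_{top}(f) > 0$), Proposition \ref{P:no-periph} is equivalent to the spectral statement that $\lambda = e^{h_{top}(f)}$ is a simple eigenvalue of $\mathcal{L}$ and is the only eigenvalue of $\mathcal{L}$ of modulus $\geq \lambda$. Theorem \ref{T:mixing-spectrum} yields exactly this spectral conclusion provided $f\vert_{K_f}$ is topologically mixing, so the heart of the proof reduces to verifying topological mixing on the core.

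To this end I establish the stronger property of topological exactness: every open $V \subseteq K_f$ satisfies $f^n(V) = K_f$ for some $n$, equivalently (part (1) of the fast/slow lemma) every open $V \subseteq K_f$ is fast. \emph{Stage A: some open $J_0 \subseteq K_f$ is fast.} If every open subinterval of $K_f$ were slow, compactness of $K_f$ together with the observation right after the fast/slow lemma (overlapping slow intervals have slow union) would allow us to iteratively merge a finite cover into a single slow interval equal to $K_f$. Part (2) of the same lemma then bounds $\var{f^n}{K_f} \leq 2^{n/2}$, and since $I \setminus K_f$ maps into $K_f$ after a bounded number of iterates, $\var{f^n}{I} = O(2^{n/2})$; the Misiurewicz--Szlenk identity $h_{top}(f) = \lim_n \frac{1}{n}\log\var{f^n}{I}$ would then force $h_{top}(f) \leq \frac{1}{2}\log 2$, contradicting the hypothesis. \emph{Stage B: every open $V \subseteq K_f$ is fast.} The hypothesis excludes renormalizability of $f$, since a period-$p$ restrictive interval with $p \geq 2$ would yield $h_{top}(f) \leq \frac{1}{p}\log 2 \leq \frac{1}{2}\log 2$; by the classical theory of unimodal maps, a non-renormalizable unimodal map of positive entropy is topologically exact on its core.

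The main obstacle is Stage B --- promoting the existence of one fast subinterval to topological exactness on all of $K_f$. A self-contained version examines the open forward-invariant union $U$ of all open slow subintervals of $K_f$: by Stage A, $U \neq K_f$, and one must use $f(K_f) = K_f$, $c \in K_f$, and the merging observation to force $U = \emptyset$. This last deduction encodes the non-renormalizability consequence and, avoiding a black-box invocation of the classical dichotomy, is the most intricate part of the argument.
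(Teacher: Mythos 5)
Your first reduction is fine: via Theorem \ref{T:eigen-knead} (with multiplicities) and the fact that the spectral radius of $\mathcal{L}$ is $\lambda$, the Proposition is equivalent to saying that $\lambda$ is a simple eigenvalue of $\mathcal{L}$ on $BV$ and the only one of modulus $\lambda$. The gap is in Stage B, and it is not repairable along the route you propose, because the intermediate statement you need is false: $h_{top}(f) > \frac{1}{2}\log 2$ does \emph{not} imply that $f$ is topologically mixing (or even transitive) on its core. Your inequality ``a period-$p$ restrictive interval forces $h_{top}(f) \leq \frac{1}{p}\log 2$'' is incorrect: the renormalized map contributes at most $\frac{1}{p}\log 2$, but the entropy of $f$ is the maximum of that contribution and the entropy of the outer dynamics, which for $p \geq 3$ can be large. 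Concretely, quadratic maps in the period-$3$ (airplane) window are renormalizable with $p = 3$, have $h_{top} = \log\frac{1+\sqrt 5}{2} \approx 0.481 > \frac{1}{2}\log 2 \approx 0.347$, and possess an attracting $3$-cycle whose basin is an open invariant subset of the core, so they are not even topologically transitive there; the hypothesis only excludes period-$2$ (basilica) renormalization. Moreover, in the continuous piecewise monotone category of this paper, a map can be semiconjugate but not conjugate to a tent map of slope $> \sqrt 2$ (homtervals/wandering intervals are not excluded without smoothness hypotheses), and such maps are again non-transitive on the core while satisfying the hypothesis; so the ``classical theory'' you invoke (non-renormalizable $+$ positive entropy $\Rightarrow$ exact on the core) is not available here either. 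As a minor point, your Stage A proves nothing beyond a triviality: $K_f$ itself is fast since $f(K_f) = K_f \ni c$, so the merging argument yields an immediate contradiction without using the entropy bound --- the real issue is whether \emph{every} subinterval is fast, which is exactly what fails in the examples above. Since Theorem \ref{T:mixing-spectrum} therefore does not apply to $f$, your scheme cannot cover all maps in the hypothesis class, even though the Proposition itself is true for them.

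The paper's proof avoids this by never asserting mixing for $f$ itself. It applies the slow-interval/leo argument only to the Milnor--Thurston constant-slope model $g$ of slope $\lambda > \sqrt 2$ (where slow intervals would have lengths growing like $(\lambda^2/2)^n$, a contradiction), concluding that $D_g$ has $\lambda^{-1}$ as its unique, simple zero in the closed disk of radius $\lambda^{-1}$. It then compares $D_f$ with $D_g$ through the interval $L = \pi^{-1}(\widetilde c)$ collapsed by the semiconjugacy: either the critical orbit never returns to $L$, in which case $D_f = D_g$; or there is a first return time $p \geq 2$, giving the factorization $D_f(t) = \widetilde D_g(t)\, D_h(t^p)$ with $h$ the (unimodal) first-return map, and the extra factor only contributes zeros of modulus at least $2^{-1/p} \geq 2^{-1/2} > \lambda^{-1}$. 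It is precisely this case analysis that absorbs the period-$p \geq 3$ renormalizable maps and the homterval examples that defeat the mixing-on-the-core approach; to fix your argument you would need to replace Stage B by some version of this comparison with the constant-slope model.
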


First of all, for unimodal maps of positive entropy, the smallest real zero of $D_f(t)$ simple: 

\begin{lemma}[\cite{Ti}, Theorem 3.1] \label{T:simple}
Let $f : I \to I$ be a unimodal map with topological entropy $h_{top}(f) > 0$.  Denote as $D_f(t)$ its kneading determinant, and let $s = e^{h_{top}(f)}$. Then $r = \frac{1}{s}$ is a simple root of $D_f(t)$.
\end{lemma}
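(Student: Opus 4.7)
The plan is to translate the assertion about zeros of $D_f(t)$ into a spectral statement about the transfer operator $\mathcal{L}$ via Theorem~\ref{T:eigen-knead}, then use the entropy hypothesis to show topological mixing on the core, and invoke Theorem~\ref{T:mixing-spectrum} to conclude.

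Simplicity of $\lambda^{-1}$ is immediate from Lemma~\ref{T:simple}. For the remaining claim, I would note that any zero of $D_f$ in $\{|t|\leq\lambda^{-1}\}$ automatically lies in $\{0<|t|<1\}$ (since $D_f(0)=1\neq 0$ and $\lambda^{-1}<1$), so by Theorem~\ref{T:eigen-knead} it corresponds, with matching algebraic multiplicity, to an eigenvalue $\xi=t^{-1}$ of $\mathcal{L}$ on $BV$ with $|\xi|\geq\lambda$. By Theorem~\ref{T:sp-dec-II}(1) the spectral radius of $\mathcal{L}$ equals $\lambda$, so such a $\xi$ must be peripheral, $|\xi|=\lambda$. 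The proposition thus reduces to the spectral statement that $\lambda$ is the unique peripheral eigenvalue of $\mathcal{L}$ and has algebraic multiplicity one.

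To prove this, I would show $f$ is topologically mixing on its core $K_f$ and then apply Theorem~\ref{T:mixing-spectrum}. By the classical structure theory of unimodal maps (e.g.\ \cite{vS}), a positive-entropy unimodal map is either topologically mixing on $K_f$, or admits a nontrivial renormalization: a periodic interval $J$ of some period $n\geq 2$ with $f^n(J)\subseteq J$, carrying the full entropy of $f$, and such that $g:=f^n\vert_J$ is itself unimodal. In the renormalizable case the entropy scaling $h_{top}(g)=n\cdot h_{top}(f)$ (the entropy is captured by the $f$-cycle $\bigcup_{i=0}^{n-1} f^i(J)$, and the $n$-th iterate multiplies entropy by $n$) combined with the trivial bound $h_{top}(g)\leq\log 2$ for any unimodal map gives $h_{top}(f)\leq\tfrac{1}{n}\log 2\leq\tfrac12\log 2$, contradicting the hypothesis. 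Hence $f$ is topologically mixing on $K_f$, and Theorem~\ref{T:mixing-spectrum} finishes the argument.

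The main obstacle is the dichotomy and the associated entropy bookkeeping for renormalization: one needs the classical facts that for a positive-entropy unimodal map failing to be topologically mixing on the core, the obstruction is a genuine renormalization of period $\geq 2$, and that the full entropy of $f$ is captured by such a renormalization so that the scaling $h_{top}(g)=n\,h_{top}(f)$ holds. These are standard in one-dimensional dynamics but require careful referencing; with them in hand, the rest is a direct application of the spectral and kneading-theoretic results already developed in the paper.
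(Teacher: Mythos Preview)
Your proposal confuses which statement is being proven. Lemma~\ref{T:simple} asserts only that $r=\lambda^{-1}$ is a \emph{simple} root of $D_f(t)$, under the sole hypothesis $h_{top}(f)>0$. Your first substantive step, ``Simplicity of $\lambda^{-1}$ is immediate from Lemma~\ref{T:simple},'' is circular: you invoke the very lemma you are asked to prove. Everything that follows --- the ``remaining claim'' about absence of other zeros in $\{|t|\leq\lambda^{-1}\}$, the renormalization dichotomy, and the appeal to the hypothesis $h_{top}(f)>\tfrac12\log 2$ --- is not part of this lemma at all. You have written a proof of Proposition~\ref{P:no-periph}, which has a strictly stronger hypothesis and a strictly stronger conclusion, and which \emph{uses} Lemma~\ref{T:simple} as an input.

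In the paper, Lemma~\ref{T:simple} is not proved but quoted from \cite{Ti}; there is therefore no proof to compare against. The actual argument (in \cite{Ti}) is purely kneading-theoretic and does not pass through the spectral machinery; in particular it works for all positive-entropy unimodal maps, including those with $h_{top}(f)\leq\tfrac12\log 2$, where your mixing argument breaks down. As an aside, your write-up would be a reasonable alternative proof of Proposition~\ref{P:no-periph}: the paper proves that proposition via the Milnor--Thurston semiconjugacy to a tent map and an explicit factorization of $D_f$, whereas you go through Theorem~\ref{T:mixing-spectrum} and the renormalization dichotomy. But as a proof of the stated lemma it is vacuous.
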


This fact is closely related to the fact (see Raith \cite[Theorem 5]{Ra}) that any unimodal map of positive entropy has a unique measure of maximal entropy. 

\begin{proof}[Proof of Proposition \ref{P:no-periph}]
By Milnor-Thurston \cite{MT}, for any unimodal map $f$ of entropy $h_{top}(f) = \log \lambda$ there exists a semiconjugacy $\pi : I \to J$ of $f$ to a piecewise linear unimodal map $g : J \to J$ of slope $\pm \lambda$. That is, there is a continuous, surjective, weakly monotone map $\pi : I \to J$ such that $\pi \circ f = g \circ \pi$.

If $g$ is piecewise linear with slope $\lambda > \sqrt{2}$, then it is locally eventually onto the core \cite[Theorem 2]{Bo}. 
Indeed, if $J$ is a slow interval, $g^2\vert_{J}$ is piecewise monotone of degree $2$ and slope $\pm \lambda^2$, hence its length satisfies
$$m_0(g^{2}(J)) \geq \frac{\lambda^2}{2} m_0(J)$$
which implies for any $n \geq 1$
$$m_0(g^{2n}(J)) \geq \left(\frac{\lambda^2}{2}\right)^n m_0(J)$$
which tends to $\infty$ as $n \to \infty$, contradiction. Hence, there are no slow intervals, 
so $g$ is locally eventually onto the core.

Hence, its kneading 
determinant $D_g(t)$ 
has only one simple zero $t = e^{-h_{top}(g)} = e^{-h_{top}(f)}$, and no other zero of modulus $\leq \lambda^{-1}$.

Now, denote as $c$ the turning point of $f$, and let $\widetilde{c} = \pi(c)$ be the turning point of $g$. Moreover, let $L := \pi^{-1}(\widetilde{c})$, which is a closed interval containing $c$. There are two cases: 
\begin{enumerate}
\item
either $f^n(c) \notin L$ for all $n \geq 1$. This implies that 
$$D_f(t) = D_g(t)$$
hence the claim follows, since from above we know that $D_g(t)$ does not have any other root of modulus $\lambda^{-1}$.
\item 
Otherwise, there exists $n$ such that $f^n(c) \in L$. Let $p$ be the smallest such $n$. This implies that $g^p(\widetilde{c}) = g^p(\pi(c)) = \pi(f^p(c)) = \widetilde{c}$, since $f^p(c) \in L$. Then we get the factorisation
$$D_f(t) = \widetilde{D}_g(t) D_h(t^p)$$
where $h = f^p\mid_L$ is the first return map of $f$ to $L$, which is also a unimodal map, and $\widetilde{D}_g(t)$ is the polynomial (of degree $p-1$)
such that 
$$D_g(t) = \frac{\widetilde{D}_g(t)}{1-t^p}.$$
Since $h$ is a unimodal map, every root of $D_h$ is of modulus $\geq \frac{1}{2}$, hence, since $p \geq 2$, we have 
that every root of $D_h(t^p)$ has modulus at least $\sqrt{2}$. 
\end{enumerate}
This completes the proof. 
\end{proof}

\begin{proof}[Proof of Theorem \ref{T:unispectrum}]
It follows from Theorem \ref{T:eigen-knead} and Proposition \ref{P:no-periph}. 
\end{proof}

Note that we have proved the following fact, which may be well-known but we could not find in the literature:

\begin{corollary} \label{C:mixing}
A unimodal map $f$ with $h_{top}(f) > \frac{1}{2} \log(2)$ is mixing for the measure of maximal entropy\footnote{Let us remark that the measure of maximal entropy is in general \emph{not} absolutely continuous w.r.t. Lebesgue, 
and establishing mixing properties for the absolutely continuous invariant measure is a much more delicate problem (see e.g. \cite{Blokh-Lyubich}, \cite{Avila-Moreira}).}.
\end{corollary}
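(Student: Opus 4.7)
The plan is to derive mixing from the spectral gap of the transfer operator $\mathcal{L}$ established in Theorem \ref{T:unispectrum}, by translating it into exponential decay of correlations.

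First I would identify the measure of maximal entropy spectrally. By Theorem \ref{T:unispectrum}(1) together with Theorem \ref{T:sp-dec-II}(3), $\mathcal{L}$ on $BV$ has a unique non-negative eigenvector $\varphi$ with simple eigenvalue $\lambda = e^{h_{top}(f)}$; dually, the existence of a Borel eigenmeasure $\mu$ with $f^\star\mu = \lambda\mu$ is supplied by \cite[Chapter III.5]{HK2}, as recalled in Section \ref{S:transfer}. Normalise so that $\int \varphi\,d\mu = 1$ and set $d\nu := \varphi\,d\mu$. Using the pointwise identity
$$\mathcal{L}\bigl((\psi\circ f)\varphi\bigr) \;=\; \psi\cdot\mathcal{L}\varphi \;=\; \lambda\,\psi\varphi$$
combined with $\int\mathcal{L}h\,d\mu = \lambda\int h\,d\mu$ (which follows from the duality $\langle\mathcal{L}h,m\rangle = \langle h, f^\star m\rangle$ proved in Section \ref{S:transfer}), one obtains $\int(\psi\circ f)\,d\nu = \int\psi\,d\nu$, so $\nu$ is $f$-invariant. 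Hofbauer's construction of the MME \cite{Ho} together with uniqueness \cite[Theorem 5]{Ra} then identifies $\nu$ with the measure of maximal entropy.

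Next I would convert the spectral gap into decay of correlations. Write the spectral decomposition from Theorem \ref{T:unispectrum} as $\mathcal{L}^n = \lambda^n \Pi + R^n$, where $\Pi$ is the rank-one projector onto $\mathbb{C}\varphi$ and $\Vert R^n\Vert_{BV} \leq C\rho^n$ for some $\rho < \lambda$. Simplicity of $\lambda$ together with the duality pins down $\Pi h = (\int h\,d\mu)\varphi$. For $\psi_1,\psi_2\in BV$, iterating the pointwise identity above yields $\mathcal{L}^n\bigl((\psi_1\circ f^n)\psi_2\varphi\bigr) = \psi_1\,\mathcal{L}^n(\psi_2\varphi)$; integrating against $\mu$ and using the eigenmeasure property then gives
$$\int (\psi_1\circ f^n)\psi_2\,d\nu \;=\; \lambda^{-n}\int \psi_1\,\mathcal{L}^n(\psi_2\varphi)\,d\mu \;=\; \Bigl(\int \psi_1\,d\nu\Bigr)\Bigl(\int\psi_2\,d\nu\Bigr) + O\bigl((\rho/\lambda)^n\bigr),$$
which is exponential decay of correlations for $BV$ test functions. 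To conclude mixing, approximate indicators $\chi_A,\chi_B$ of Borel sets in $L^2(\nu)$ by finite linear combinations of indicators of intervals (which are step functions and hence $BV$, by Lemma \ref{L:dense}), apply the above estimate, and pass to the limit to obtain $\nu(A \cap f^{-n}B)\to \nu(A)\nu(B)$.

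The main obstacle I anticipate is in the first step: verifying cleanly that the $BV$-eigenvector $\varphi$ and the Borel eigenmeasure $\mu$ pair to give a genuine probability measure coinciding with the (unique) measure of maximal entropy, particularly when $K_f \subsetneq I$ and both $\varphi$ and $\mu$ are supported on the core (cf.\ Lemma \ref{L:nocore}). Once this identification is settled, the decay-of-correlations computation is essentially formal from the spectral gap, and the extension from $BV$ test functions to Borel sets is classical.
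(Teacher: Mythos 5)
Your proposal is correct and follows essentially the same route as the paper: the paper states Corollary~\ref{C:mixing} as an immediate consequence of the spectral picture in Theorem~\ref{T:unispectrum} (simple leading eigenvalue $\lambda$, no other peripheral spectrum), and your argument is exactly the standard expansion of that implication, passing through $d\nu=\varphi\,d\mu$, the pull-out identity, exponential decay of correlations for $BV$ observables, and approximation of indicators. The identification of $\nu$ with the measure of maximal entropy that you flag as the main obstacle is handled in the paper only by the same citations you invoke (\cite{HK2}, \cite{Ho}, \cite{Ra}), so your treatment matches the paper's level of rigor.
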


\section{Convergence in the unimodal case}

Let us now state our main convergence result. 

\begin{theorem} \label{T:unimodal-conv}
Let $f : I \to I$ be a unimodal map with $h_{top}(f) > \frac{1}{2} \log 2$.
Then:
\begin{enumerate}
\item for any $\varphi$ in $BV$, the sequence $(\mathcal{L}^n \varphi)$ converges to an eigenvector of $\mathcal{L}$
of eigenvalue $\lambda = e^{h_{top}(f)}$;
\item
the sequence $(H_n)$ of homeomorphisms converges to a limit map $H_\infty$.
The limit $H_\infty$ semiconjugates the dynamics $f$ to the piecewise linear map of the same entropy;
\item
the sequence $(f_n)$ converges uniformly to a unimodal map $f_\infty$
of constant slope 
$$s = e^{h_{top}(f)};$$ 
\item moreover, the sequences $(g_n)$ and $(h_n)$ also converge uniformly 
to limits $g_\infty, h_\infty$.
\end{enumerate}
\end{theorem}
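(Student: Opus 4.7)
The backbone of the argument is the spectral gap from Theorem \ref{T:unispectrum}: $\lambda = e^{h_{top}(f)}$ is a simple isolated eigenvalue of $\mathcal{L}$, the rest of the spectrum lies in a disk of radius $\rho < \lambda$, and the rank-one spectral projector can be written $\Pi \varphi = \ell(\varphi) \psi_\lambda$, where $\psi_\lambda \in BV$ is the nonnegative leading eigenvector and $\ell$ is the left eigenfunctional. The Kato-Nagy decomposition $\mathcal{L}^n = \lambda^n \Pi + \mathcal{L}^n(I-\Pi)$ with $\|\mathcal{L}^n(I-\Pi)\|_{BV} = O(\rho^n)$ then yields $\lambda^{-n}\mathcal{L}^n \varphi \to \ell(\varphi)\psi_\lambda$ in BV at a geometric rate, which is statement (1) up to the natural normalization by $\lambda^n$.

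Part (2) is transported to the metric side via the duality $\langle \mathcal{L}\varphi, m\rangle = \langle \varphi, f^\star m\rangle$ and the identity $P^n(m_0) = H_n^\star(m_0)$ established in Section 2. Plugging $\varphi = \chi_J$ into (1) and dividing by $\|(f^\star)^n m_0\|$ gives
\[
m_n(J) \;=\; \frac{(f^\star)^n m_0(J)}{\|(f^\star)^n m_0\|} \;\longrightarrow\; m_\infty(J) \;:=\; \frac{\ell(\chi_J)}{\ell(1)}
\]
for every arc $J$. Since $\ell$ is a left eigenfunctional at $\lambda$, we have $f^\star m_\infty = \lambda\, m_\infty$, so by Lemma \ref{L:semiconj} the distribution function $H_\infty(x) := m_\infty([0,x])$ semiconjugates $f$ to a piecewise linear unimodal map of slope $\lambda$. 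Corollary \ref{C:mixing} and mixing of the measure of maximal entropy imply $m_\infty$ is non-atomic, so $H_\infty$ is continuous, and because each $H_n$ is monotone increasing with a continuous pointwise limit, the classical Pólya--Dini argument upgrades pointwise to uniform convergence.

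For (3) and (4), I would combine $f_n = H_n \circ f \circ H_n^{-1}$ with the uniform convergence $H_n \to H_\infty$. When $K_f = I$, the metric $m_\infty$ has full support, $H_\infty$ is a genuine homeomorphism, $H_n^{-1} \to H_\infty^{-1}$ uniformly by monotone inversion, and thus $f_n \to f_\infty := H_\infty \circ f \circ H_\infty^{-1}$ uniformly; by construction $f_\infty$ is piecewise linear of slope $\lambda$. The critical points of $f_n$ are $H_n(c)$ and its critical values are $H_n(f(c_i))$, both of which converge, so the constant-slope map $g_n$ attached to $CV(f_n)$ by the explicit formulas at the start of Section 2 converges uniformly to $g_\infty$, and then $h_n = g_n^{-1} \circ f_n$ converges accordingly.

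The main obstacle is the case $K_f \subsetneq I$: then $H_\infty$ is constant on each component of $I \setminus K_f$, so $H_\infty$ is not a homeomorphism and the sequence $H_n^{-1}$ has no continuous limit. Here the slow-interval dichotomy rescues the argument: any $J \subseteq I \setminus K_f$ is slow, hence $\textup{Var}_J f^n \leq 2^{n/2}$, while the total variation grows like $\lambda^n$; the hypothesis $\lambda > \sqrt{2}$ then gives the geometric rate $m_n(J) = O\bigl((\sqrt{2}/\lambda)^n\bigr)$ off the core. This quantitative control on the flat regions of $H_n$ allows one to deduce uniform convergence of $f_n$, $g_n$, and $h_n$ by treating the core and each tail separately, with the tails collapsing in a controlled affine fashion.
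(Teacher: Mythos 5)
There is a genuine gap in your treatment of parts (3)--(4). Your case analysis is ``$K_f = I$ versus $K_f \subsetneq I$'', and in the first case you assert that $m_\infty$ has full support, that $H_\infty$ is a homeomorphism, and that $H_n^{-1} \to H_\infty^{-1}$ uniformly. But the set where $H_\infty$ fails to be injective is not $I \setminus K_f$: it is the union of the \emph{slow} intervals, and slow intervals (homtervals, basins of attracting periodic points, wandering intervals --- all possible for merely continuous unimodal maps) can sit inside the core even when $h_{top}(f) > \frac{1}{2}\log 2$. For instance, take the tent map of slope $1.9$ and modify it near its interior fixed point to create a small attracting interval: the entropy is unchanged, $K_f = I$... yet $m_\infty$ vanishes on the basin and $H_\infty$ collapses it. So the ``good'' case of your dichotomy is not actually good, and the degenerate phenomenon you relegate to the tails is the generic difficulty. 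In the degenerate situation your sketch does not even say how $f_\infty$ is defined when $H_\infty$ is not invertible, nor how uniform convergence of $f_n = H_n \circ f \circ H_n^{-1}$ is extracted when $H_n^{-1}$ has no limit; ``treating the core and each tail separately, with the tails collapsing in a controlled affine fashion'' is a description, not an argument. The paper resolves exactly these two points: a lemma showing that $H_\infty$ is constant \emph{precisely} on slow intervals (together with the fact that $f$ maps slow intervals to slow intervals, this makes $f_\infty(x) := H_\infty(f(y))$, $y \in H_\infty^{-1}(x)$, well defined), and a compactness argument (the $\delta(\epsilon)$, $K_\epsilon$ estimate in Lemma \ref{L:f-convergence}) proving $\sup_x d\bigl(H_n^{-1}(x), H_\infty^{-1}(x)\bigr) \to 0$, which is what yields uniform convergence of $f_n$ regardless of where the slow intervals lie. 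Once (3) is in place, the paper gets (4) by quoting Milnor's conditional convergence theorem (Theorem \ref{T:Mil-conditional}); your alternative route via convergence of $CV(f_n)$ and the explicit constant-slope formulas is reasonable, but it inherits the same unproven input.

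Two smaller remarks. Your parts (1)--(2) follow the paper's spectral-gap strategy, but the paper gets uniform convergence of $H_n$ directly from $\Vert \chi_{[x,y]}\Vert_{BV} \leq 3$ and the bound on $P_{res}^n$, which is cleaner than your route through pointwise convergence plus a P\'olya--Dini upgrade: the latter needs continuity of $H_\infty$, and your justification via ``mixing of the measure of maximal entropy implies $m_\infty$ is non-atomic'' conflates $m_\infty$ (the eigenmeasure of the dual operator, i.e.\ the measure $\mu$ with $\int \mathcal{L}\varphi\, d\mu = \lambda \int \varphi\, d\mu$) with the maximal measure $\nu$, $d\nu = \varphi\, d\mu$; an atom of $\mu$ at a point where $\varphi$ vanishes is not excluded by mixing of $\nu$. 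Finally, note that $f^\star m_\infty = \lambda m_\infty$ does give, via Lemma \ref{L:semiconj}, a semiconjugacy to a constant-slope map, in agreement with the paper's closing step; that part of your argument is fine.
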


Note that the example of anomalous convergence from \cite[Slide 21]{Mil-slides} 
with $f_0(x) = 2.8 x (1-x)$ has zero entropy, hence it does not contradict Theorem \ref{T:unimodal-conv}. 
More counterexamples will be discussed in the next Section.

\begin{proof}
Consider 
$$m_n := H_n^\star (m_0) = \frac{(f^\star)^n(m_0)}{\Vert (f^\star)^n(m_0) \Vert}.$$
Then, if we let $M_n := \Vert (f^\star)^n(m_0) \Vert$ and $J = [x, y]$, we have by Theorem \ref{T:unispectrum}
$$\frac{\mathcal{L}^n (\chi_J)}{M_n} \to c \cdot P(\chi_J)$$
where $P = P_0$ is the projector to the leading eigenspace of $\mathcal{L}$, and $c := \frac{1}{\langle P(1), m_0 \rangle}$.
Thus
\begin{align} \label{E:Hn}
H_n(y) - H_n(x) & = H_n^\star(m_0)([x, y]) \\
& = m_n(J) \\
& = \langle m_n, \chi_J \rangle \\
& = \frac{\langle (f^\star)^n(m_0), \chi_J \rangle}{M_n} \\
\label{E:Hn1}
& = \frac{\langle m_0, \mathcal{L}^n (\chi_J) \rangle}{M_n} \to c \cdot \langle m_0, P(\chi_J) \rangle
\end{align}
converges as $n \to \infty$. The convergence is uniform in $x, y$ as
\begin{align*} 
\lambda^{-n} \langle (P_{res})^n(\chi_{[x, y]}), m_0 \rangle &  \leq \lambda^{-n} \Vert (P_{res})^n \Vert_{BV} \Vert \chi_{[x, y]} \Vert_{BV} \\
&  \leq  3 \lambda^{-n} \Vert (P_{res})^n \Vert_{BV} \to 0
\end{align*}
uniformly by Theorem \ref{T:unispectrum}.  
Hence we define 
$$H_\infty(x) := c\cdot \langle m_0, P(\chi_{[0, x]}) \rangle = \frac{\int P(\chi_{[0, x]}) \ dm_0}{\int P(1) \ dm_0} $$
and we obtain 
$$H_n \to H_\infty$$
uniformly.
Then, we have 
$$f_n = H_n \circ f \circ H_n^{-1}.$$
Given $x \in I$, let $y \in H_\infty^{-1}(x)$ and define 
$$f_\infty(x) := H_{\infty}(f(y)).$$
Note that, if $[y_1, y_2]$ is an interval where $H_\infty$ is constant, then 
$H_\infty$ is also constant on $[f(y_1), f(y_2)]$, hence 
$f_\infty$ is well-defined.

\begin{lemma}
$H_\infty$ is constant precisely on slow intervals.
\end{lemma}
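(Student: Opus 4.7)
The plan is to reduce the statement to an asymptotic comparison of variations, using what the proof of Theorem \ref{T:unimodal-conv} has already established. From the computation in equations \eqref{E:Hn}--\eqref{E:Hn1} and the identity $\var{f^n}{J} = \langle \mathcal{L}^n \chi_J, m_0\rangle$ from Section \ref{S:transfer}, one obtains, for every arc $[x,y] \subseteq I$,
$$H_\infty(y) - H_\infty(x) \ = \ \lim_{n \to \infty} \frac{\var{f^n}{[x,y]}}{\var{f^n}{I}}.$$
The denominator satisfies $\var{f^n}{I} = \langle \mathcal{L}^n 1, m_0\rangle \sim C \lambda^n$ with $C>0$ and $\lambda = e^{h_{top}(f)}$: this follows from the spectral decomposition of $\mathcal{L}$ in Theorem \ref{T:unispectrum}, since the leading eigenvalue $\lambda$ is simple and isolated, and $1$ has nonzero projection onto the leading eigenspace. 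Hence the question reduces to whether $\var{f^n}{[x,y]}$ is $o(\lambda^n)$ or not.

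For the direction "slow $\Rightarrow$ constant", I would invoke the slow/fast lemma directly: if $[x,y]$ is slow, then $\var{f^n}{[x,y]} \leq 2^{n/2}$, and since the hypothesis $h_{top}(f) > \frac{1}{2}\log 2$ gives $\lambda > \sqrt{2}$, the ratio $\var{f^n}{[x,y]}/\var{f^n}{I}$ tends to $0$. Hence $H_\infty$ is constant on $[x,y]$.

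For the reverse direction "fast $\Rightarrow$ not constant", I would use the observation recorded right after the slow/fast lemma: $J$ is fast if and only if $\liminf_{n} \var{f^n}{J}/\lambda^n > 0$. (One can justify this using the previous lemma, which gives $k$ with $f^k([x,y]) = K_f = [f(c), f^2(c)]$; then $\var{f^n}{[x,y]} \geq \var{f^{n-k}}{K_f}$ for $n \geq k$, and applying the same spectral asymptotics to the core gives the positive liminf.) Combining with the denominator asymptotics, we get $\liminf_{n}(H_n(y) - H_n(x)) > 0$, and since the limit exists by the earlier part of the proof of Theorem \ref{T:unimodal-conv}, it must itself be strictly positive.

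The only delicate point is bookkeeping: a fast interval can contain a slow sub-interval, and conversely a non-constancy region of $H_\infty$ must be identified with a \emph{maximal} fast interval. Because slow intervals are closed under overlapping union (observed in the excerpt), the decomposition of $I$ into maximal slow intervals is well-defined, and the two directions above together show that these are exactly the maximal intervals on which $H_\infty$ is constant; this is the content of the lemma. I do not expect any essential difficulty beyond carefully invoking Theorem \ref{T:unispectrum} to turn the "liminf" statement into a genuine limit.
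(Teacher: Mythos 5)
Your argument is essentially the paper's proof: both reduce constancy of $H_\infty$ on an interval $J$ to the vanishing of $\lim_{n\to\infty} \var{f^n}{J}/\var{f^n}{I}$ via \eqref{E:Hn1}, treat slow intervals with the lap bound $\var{f^n}{J}\leq 2^{n/2}$ against $\lambda>\sqrt{2}$, and treat fast intervals by using $f^k(J)=K_f$ to compare $\var{f^n}{J}$ with the variation over the core. The only differences are presentational: where you assert $\var{f^n}{I}\sim C\lambda^n$ with $C>0$ and $\liminf_n \var{f^n}{K_f}/\lambda^n>0$ through the spectral projector (note that what is really needed is positivity of the pairings $\langle P(1),m_0\rangle$ and $\langle P(\chi_{K_f}),m_0\rangle$, not merely that the projections are nonzero), the paper asserts the equivalent unproved fact that $\var{f^n}{K_f}/\var{f^n}{I}$ is bounded below independently of $n$, so your proposal introduces no new gap, and the closing bookkeeping about maximal fast intervals is not needed since the statement is proved interval by interval.
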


\begin{proof}
In fact, by eq. \eqref{E:Hn1}, $H_\infty$ is constant on $J$ if and only if  
\begin{equation} \label{E:core}
\lim_{n \to \infty} \frac{\langle m_0,  \mathcal{L}^n (\chi_J) \rangle}{M_n} = 
\lim_{n \to \infty} \frac{\textup{Var}_J (f^n)}{\textup{Var}_{I} (f^n)} = 0,
\end{equation}
where $I  = [0,1]$ is the whole interval. If $J$ is slow, then 
$$\textup{Var}_J (f^n) \leq \ell(f^n, J) \leq 2^{n/2}$$ 
and $\lambda > \sqrt{2}$, hence the limit above is zero. 
If $J$ is fast, then there exists $k$ such that $f^k(J) = K$ the core of $f$, hence 
$$\textup{Var}_J (f^{n+k}) \geq \textup{Var}_{f^k(J)} (f^{n}) = \textup{Var}_{K} (f^{n}) $$
hence, since 
$\frac{\textup{Var}_{K} (f^{n})}{\textup{Var}_{I} (f^{n})}$ is bounded below independently of $n$,
the limit in \eqref{E:core} is positive.
\end{proof}

Now, (3) follows from: 

\begin{lemma} \label{L:f-convergence}
We have 
$$f_n \to f_\infty$$
uniformly. 
\end{lemma}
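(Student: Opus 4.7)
The plan is to combine the uniform convergence $H_n\to H_\infty$ established above with a compactness argument, since writing $f_n=H_n\circ f\circ H_n^{-1}$ and naively passing to the limit is obstructed by the fact that $H_\infty$ need not be injective: by the preceding lemma it collapses every slow interval to a single point, so the set-valued ``inverse'' $H_\infty^{-1}(x)$ may be a nondegenerate interval and $H_n^{-1}$ has no uniform limit. The key point is that $H_n\circ f\circ H_n^{-1}$ still has a uniform limit because $H_\infty\circ f$ is constant on each fibre of $H_\infty$ (slow intervals are mapped by $f$ into slow intervals), which is exactly the content of the well-definedness of $f_\infty$ asserted just before the lemma.

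First I would check that $f_\infty$ itself is continuous on $I$. Given $x_k\to x$, choose $y_k\in H_\infty^{-1}(x_k)$ (nonempty, since $H_\infty$ is continuous with $H_\infty(0)=0$ and $H_\infty(1)=1$); any subsequential limit $y^\ast$ of $(y_k)$ in the compact interval $I$ satisfies $H_\infty(y^\ast)=x$ by continuity of $H_\infty$, so
\[
f_\infty(x_k)=H_\infty(f(y_k))\longrightarrow H_\infty(f(y^\ast))=f_\infty(x)
\]
by continuity of $H_\infty\circ f$ and independence of the choice of preimage. Since every subsequence has the same limit, $f_\infty(x_k)\to f_\infty(x)$ along the full sequence.

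With continuity of $f_\infty$ in hand, I would prove uniform convergence of $f_n$ by contradiction. Suppose there exist $\epsilon>0$, indices $n_k\to\infty$, and points $x_k\in I$ with $|f_{n_k}(x_k)-f_\infty(x_k)|\geq\epsilon$. Passing to subsequences, $x_k\to x^\ast$ and $y_k:=H_{n_k}^{-1}(x_k)\to y^\ast$ in $I$. The identity $H_{n_k}(y_k)=x_k$, together with $\|H_n-H_\infty\|_\infty\to 0$ and the continuity of $H_\infty$, forces $H_\infty(y^\ast)=x^\ast$, so $y^\ast\in H_\infty^{-1}(x^\ast)$. Using continuity of $f$ and uniform convergence $H_n\to H_\infty$ once more,
\[
f_{n_k}(x_k)=H_{n_k}(f(y_k))\longrightarrow H_\infty(f(y^\ast))=f_\infty(x^\ast),
\]
while continuity of $f_\infty$ gives $f_\infty(x_k)\to f_\infty(x^\ast)$. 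Subtracting contradicts $|f_{n_k}(x_k)-f_\infty(x_k)|\geq\epsilon$, which proves uniform convergence.

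The principal subtlety, and the reason this compactness/contradiction route is necessary, is the non-injectivity of $H_\infty$: there is no functional inverse to which $H_n^{-1}$ could converge uniformly. The argument circumvents this by extracting a particular subsequential preimage $y^\ast\in H_\infty^{-1}(x^\ast)$ and exploiting the fact that $H_\infty\circ f$ is constant on each fibre of $H_\infty$, so the limiting value $H_\infty(f(y^\ast))=f_\infty(x^\ast)$ is unambiguous.
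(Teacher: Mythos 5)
Your argument is correct, and it takes a genuinely different route from the paper. The paper reduces the lemma to the quantitative statement $\sup_{x \in I} d(H_n^{-1}(x), H_\infty^{-1}(x)) \to 0$ and proves it by an explicit $\epsilon$--$\delta$ construction: it removes the large slow intervals (the set $K_\epsilon$), uses compactness to get a uniform lower bound $\delta(\epsilon)>0$ on the growth of $H_\infty$ across points of $K_\epsilon$, and then uses $\Vert H_n - H_\infty\Vert_\infty < \delta(\epsilon)/2$ to locate, for every $y$, a preimage $H_n^{-1}(y)$ within $\epsilon$ of the fibre $H_\infty^{-1}(y)$ (with separate treatment of the boundary cases). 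You instead avoid any quantitative control of the preimages: you first check continuity of $f_\infty$ by a subsequence argument, and then run a soft sequential-compactness contradiction, extracting limits $x_k \to x^\ast$, $y_k = H_{n_k}^{-1}(x_k) \to y^\ast$ and using uniform convergence of $H_n$, continuity of $f$ and $H_\infty$, and the well-definedness of $f_\infty$ on fibres (which the paper establishes just before the lemma, so you may cite it) to force $f_{n_k}(x_k) \to f_\infty(x^\ast)$ and $f_\infty(x_k) \to f_\infty(x^\ast)$ simultaneously. Your approach is more elementary and purely topological, requiring nothing about slow intervals beyond the fibre-invariance already asserted; the paper's approach is more explicit and yields as a by-product the uniform convergence of the preimage sets $H_n^{-1}(x)$ to the fibres of $H_\infty$, a statement of independent interest. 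Both proofs rest on the same two inputs: uniform convergence $H_n \to H_\infty$ and constancy of $H_\infty \circ f$ on the fibres of $H_\infty$.
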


\begin{proof}
Given the definitions of $H_n$, $H_\infty$ above, the claim follows if we prove 
$$\sup_{x \in I} d(H_n^{-1}(x), H_\infty^{-1}(x)) \to 0,$$
where $d$ is the euclidean distance. Let us see the proof. 

Fix $\epsilon > 0$ so that neither $x = \epsilon$ nor $x = 1 - \epsilon$ lie in a slow interval, and let $K_\epsilon$ be the complement of the union of open slow intervals with diameter $\geq 2 \epsilon$. 
Define 
$$\delta(x, \epsilon) :=  \min \{ H_\infty(x + \epsilon) - H_\infty(x), H_\infty(x) - H_\infty(x -\epsilon)\}.$$
Note that for any $x \in K_\epsilon \cap [\epsilon, 1 - \epsilon]$ we have $\delta(x, \epsilon) > 0$. 
Indeed, if not then $H_\infty(x + \epsilon) = H_\infty(x) = H_\infty(x-\epsilon)$, hence $x$ lies 
in the interior of a slow interval of length at least $2 \epsilon$, which contradicts the definition of 
$K_\epsilon$. 

Since $K_\epsilon \cap [\epsilon, 1 - \epsilon]$ is compact, then 
$$\delta(\epsilon) := \inf_{x \in K_\epsilon \cap [\epsilon, 1 - \epsilon]} \delta(x, \epsilon)  > 0.$$
Since $H_n$ converges to $H_\infty$ uniformly,  let $n_0$ such that 
$$\Vert H_n - H_\infty \Vert_\infty < \frac{\delta(\epsilon)}{2} \qquad \textup{for }n \geq n_0.$$ 
Now, let $y \in I$. Let $x_1, x_2$ be the two endpoints of $H_\infty^{-1}(y)$, with $x_1 \leq x_2$. 

Suppose that $[x_1, x_2] \subseteq [\epsilon, 1 - \epsilon]$. 
Then, 
$$H_n(x_2 + \epsilon) \geq H_\infty(x_2+ \epsilon) - \frac{\delta(\epsilon)}{2} 
\geq H_\infty(x_2) + \frac{\delta(\epsilon)}{2}$$
and similarly 
$$H_n(x_1 - \epsilon) \leq H_\infty(x_1) - \frac{\delta(\epsilon)}{2}$$
hence there exists a point $p \in [x_1 - \epsilon, x_2 + \epsilon]$ with $H_n(p) = H_\infty(x_1) = y$.

Suppose otherwise that $x_1 \leq \epsilon$. Then 
$$H_\infty(2 \epsilon) \geq H_\infty(\epsilon) + \delta(\epsilon)$$
hence 
$$H_n(2 \epsilon) \geq H_\infty(\epsilon) + \frac{\delta(\epsilon)}{2} = y + \frac{\delta}{2}$$
whereas 
$$H_n(0) = H_\infty(0) = 0$$
hence there exists $p \in [0, 2 \epsilon]$ such that $H_n(p) = H_\infty(x_1) = y$.

The case $x_2 \geq 1 - \epsilon$ is symmetric.
\end{proof}

\medskip
\noindent 
Finally, (4) follows from (3) and the following conditional convergence result of Milnor.

\begin{theorem}[\cite{Mil-slides}, Slide 22] \label{T:Mil-conditional}
If $(f_n)$ converges uniformly to $f_\infty$, then $(g_n)$ and $(h_n)$ also converge uniformly. 
Moreover, the limit maps $f_\infty, g_\infty, h_\infty$ commute with each other.
\end{theorem}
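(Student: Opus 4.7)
The plan is to deduce convergence of $g_n$ and $h_n$ from the uniform convergence of $f_n$ by exploiting the fact that $g_n$ is determined by the critical values of $f_n$ via the explicit formulae of Lemma 2.1, and that these critical values depend continuously on $f_n$ as long as the degree does not drop. The commutativity will then fall out of taking uniform limits in the two algorithmic identities $f_n = g_n \circ h_n$ and $f_{n+1} = h_n \circ g_n$.

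\textbf{Convergence of critical values.} Each $f_n$ is topologically conjugate to $f_0$ via $H_n$, so it has exactly $d+1$ critical points $a = c_0^{(n)} < c_1^{(n)} < \cdots < c_d^{(n)} = b$ with strictly alternating monotonicity on successive laps. After passing to a subsequence, I may assume $c_i^{(n)} \to c_i^{(\infty)}$ for each $i$. The uniform limit $f_\infty$ is then weakly monotone on each $[c_{i-1}^{(\infty)}, c_i^{(\infty)}]$. Under the standing assumption that $f_\infty$ has degree $d$ (which in the application to Theorem \ref{T:unimodal-conv} follows from the earlier parts of that theorem), the $c_i^{(\infty)}$ are distinct and coincide with the critical points of $f_\infty$; uniqueness of the limit lets me drop the subsequence, so $CV(f_n) \to CV(f_\infty)$ in $\mathbb{R}^{d+1}$.

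\textbf{Convergence of $g_n$, $h_n$, and commutativity.} By the formulae in the proof of Lemma 2.1, the slope $s_n$ and turning points $\widetilde{c}_i^{(n)}$ of $g_n$ are continuous functions of $CV(f_n)$, and the limiting slope $s_\infty$ is strictly positive because $f_\infty$ has $d$ nondegenerate laps. Hence $g_n \to g_\infty$ uniformly, where $g_\infty$ is the piecewise linear constant-slope map with critical values $CV(f_\infty)$. On each lap $[c_{i-1}^{(n)}, c_i^{(n)}]$ the algorithm sets $h_n = g_n^{-1} \circ f_n$ using the affine local inverse of $g_n$; since $s_\infty > 0$ these local inverses converge uniformly, and therefore $h_n \to h_\infty := g_\infty^{-1} \circ f_\infty$ uniformly (the endpoints of the laps also converge, so the piecewise definitions assemble into a continuous limit). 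Passing to uniform limits in $f_n = g_n \circ h_n$ and $f_{n+1} = h_n \circ g_n$, using that composition is jointly continuous on uniformly convergent sequences of continuous self-maps of the compact interval $I$, yields
$$f_\infty = g_\infty \circ h_\infty = h_\infty \circ g_\infty,$$
so $g_\infty$ and $h_\infty$ commute, and $f_\infty$ commutes with each of them as their common composition.

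The delicate point is the first step: two consecutive limit critical points could in principle collide, which would simultaneously drop the degree of $f_\infty$, make the formulae of Lemma 2.1 singular for $g_n$ (slope blowing up, some laps of $g_n$ degenerating to a point), and break the piecewise recovery of $h_n$. In the application to Theorem \ref{T:unimodal-conv} this nondegeneracy is automatic, since the limit $f_\infty$ there is a unimodal map of positive constant slope $s = e^{h_{top}(f)} > \sqrt{2}$; for the general statement of Theorem \ref{T:Mil-conditional} it is effectively part of the hypothesis that $f_\infty$ remains piecewise monotone of the same degree $d$ as the $f_n$.
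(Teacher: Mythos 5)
The paper never proves this statement: it is imported verbatim from Milnor's notes (\cite{Mil-slides}, Slide 22) and used as a black box to pass from part (3) to part (4) of Theorem \ref{T:unimodal-conv}, so there is no in-paper argument to compare yours with, and it must be judged on its own terms. On those terms, the nondegenerate part of your argument is sound: if one knows that $f_\infty$ is again piecewise monotone of degree $d$, then your chain (subsequential limits of the $c_i^{(n)}$ must be the turning points of $f_\infty$, hence $CV(f_n)\to CV(f_\infty)$; the formulas of Lemma 2.1 are continuous in the critical value vector with $s_\infty>0$, hence $g_n\to g_\infty$; the affine local inverses converge, hence $h_n\to h_\infty$; then pass to the uniform limit in $f_n=g_n\circ h_n$ and $f_{n+1}=h_n\circ g_n$ and use associativity to get full commutativity) is correct, and it does suffice for the only place the paper uses the theorem, since there $f_\infty$ is unimodal of constant slope $\lambda>\sqrt 2$.

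The gap is that this is a proof of a weaker theorem than the one stated. The hypothesis is only that $(f_n)$ converges uniformly; nothing prevents adjacent critical values of $f_n$ from merging in the limit, so that $f_\infty$ has flat pieces or fewer laps, and declaring nondegeneracy to be ``effectively part of the hypothesis'' is not legitimate --- the degenerate regime is exactly the delicate one, and it is relevant precisely for the low-entropy and ``anomalous'' examples (e.g. $f_0(x)=2.8x(1-x)$) for which the conditional theorem is invoked. In that regime your first step collapses: the critical points $c_i^{(n)}$, some critical values, and the turning points $\widetilde c_i^{(n)}$ of $g_n$ may oscillate without converging, so continuity of Lemma 2.1 in $CV(f_n)$ gives nothing. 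What rescues the statement there is a different mechanism absent from your proof: the oscillating data is confined to laps whose image has length tending to $0$, so it affects $g_n$ and $h_n$ only by $o(1)$ in the sup norm. For instance, in the unimodal normalization the relevant critical value of $f_n$ is $\max_I f_n$, which converges automatically under uniform convergence even if the critical point oscillates, the turning point of $g_n$ is frozen at $1/2$, and on a flat top of $f_\infty$ one has $h_n(x)=f_n(x)/s_n$ or $1-f_n(x)/s_n$ according to the side of the oscillating critical point, both of which tend to $1/2$. A complete proof of Theorem \ref{T:Mil-conditional} needs an argument of this kind (or an equicontinuity/compactness argument) covering the degenerate case, not just the continuity-of-$CV$ argument you give.
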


Finally, by passing to the limit in \eqref{E:defHn}, the map $H_\infty$ semiconjugates $f_0$ to $f_\infty$. 
Moreover, as in Lemma \ref{L:semiconj}, since $m_\infty$ is an eigenvector of $P$ with eigenvalue $\lambda = e^{h_{top}(f)}$, the map $f_\infty$ is piecewise linear of slope $\lambda$. 
\end{proof}

In fact, the previous discussion applies verbatim to any piecewise monotone map which is mixing for the measure of maximal entropy; 
in particular, to topologically mixing maps. Hence, Theorem \ref{T:unimodal-conv} holds for them as well.  

\section{Counterexamples} \label{S:counter}

\subsection{Higher degree case}
We now show that the iterative procedure above described does not always converge (even if we assume positive entropy). 
Let $a, b$ be two positive constants, with $a + b = 1$. Let $f$ be such that 
$f(0) = 1$, $f(a) = a$, or $f(1) = 0$. Moreover, $f$ maps the interval $[0, a]$ three times onto $[a, 1]$, 
and maps $[a, 1]$ three times onto $[0, a]$. 

\begin{figure}[h!]
\includegraphics[width = 0.7 \textwidth]{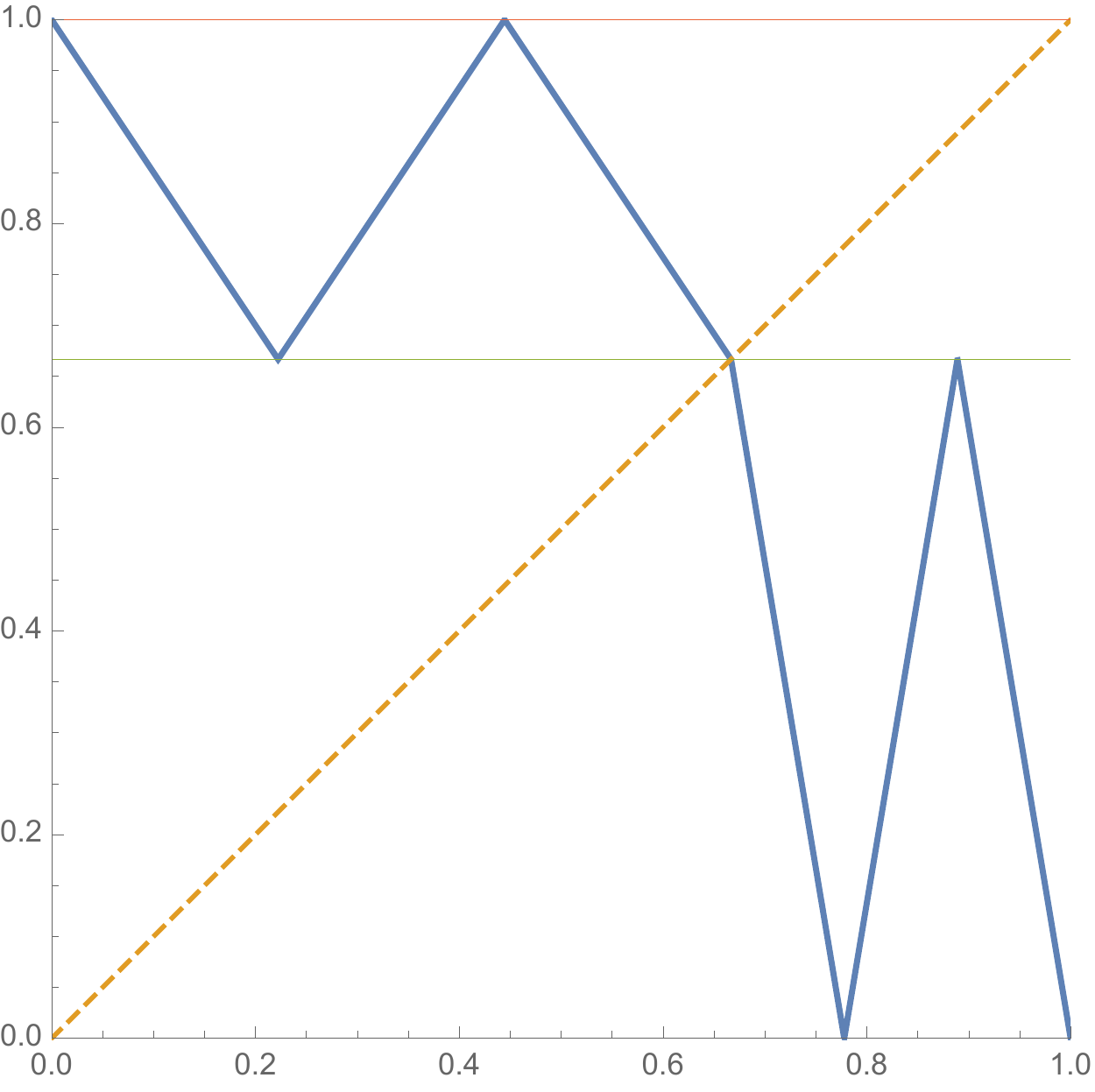}
\caption{A counterexample to convergence.}
\end{figure}

In fact, one can observe by the definition that 
$$(f^\star)^N([0, a]) = 3 (f^\star)^{N-1}([a, 1]) = 9 (f^\star)^{N-2}([0, a])$$
which, since we know $m_0([0, a]) = a$ and $m_0([a, 1]) = b$, implies that 
$$(f^\star)^N([0, a]) = \left\{ \begin{array}{ll}
3^N a & \textup{if }N \textup{ is even} \\
3^N b & \textup{if }N \textup{ is odd.} 
\end{array}\right.$$
Hence, 
$$m_N([0, a]) = \left\{ \begin{array}{ll}
a & \textup{if }N \textup{ is even} \\
b & \textup{if }N \textup{ is odd} 
\end{array}\right.$$
thus $m_N$, and hence $H_N$, does not converge as we chose $a \neq b$. Indeed, 
$H_N(x) = x$ if $N$ is even, and 
$$H_N(x) = \left\{ \begin{array}{ll} \frac{b}{a} x & \textup{if }0 \leq x \leq a \\ 
b + \frac{a}{b} (x-a) &  \textup{if }a \leq x \leq 1 \end{array} \right.$$
if $N$ is odd. Similarly, $f_N = f$ for $N$ even, while $f_N$ maps $[0, b]$ to itself if $N$ odd. 
Hence the sequence $(f_n)$ does not converge either.

\subsection{Unimodal case}
Let us note that the condition $h_{top}(f) > \frac{1}{2} \log 2$ in Theorem \ref{T:unimodal-conv} is needed to guarantee convergence. 
To see why, let us assume that $f$ is the basilica tuned with the airplane, i.e. where the large scale dynamics is 
given by the basilica, and the small scale dynamics is the airplane. 
This is given by the external angle $\theta = \frac{26}{63} = .\overline{011010}$.
Then all entropy is concentrated in the small Julia set, which moves with period $2$. Thus, 
it is not hard to find a metric (e.g. one concentrated on one of the two copies of the Julia set) which 
does not converge, as the odd and even iterates are different.
In general one has: 

\begin{theorem}
For any $0 < h \leq \frac{1}{2} \log(2)$, there exists a unimodal map of topological entropy $h$ 
such that the sequences $(H_n)$ and $(f_n)$ do not converge uniformly.  
\end{theorem}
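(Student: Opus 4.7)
The plan is to transplant the period-$2$ oscillation of the multimodal counterexample of Section \ref{S:counter} into the unimodal category via renormalization. Given $h \in (0, \tfrac{1}{2}\log 2]$, I would choose a unimodal map $g$ with $h_{\mathrm{top}}(g) = 2h \le \log 2$ and take $f$ to be the period-$2$ (basilica) tuning of $g$; concretely, an appropriate real quadratic in the primary period-$2$ sub-copy of the Mandelbrot set. Then $h_{\mathrm{top}}(f) = h$, and the core decomposes (up to a repelling fixed point) as $K_f = L \sqcup f(L)$, where $L \ni c$ is the restrictive interval, $f$ swaps the two pieces, and $f^2|_L$ is topologically conjugate to $g$.

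Set $\lambda = e^h$. Because $f$ swaps $L$ with $f(L)$, the Hofbauer operator $T$ on the invariant subspace spanned by intervals contained in $L \cup f(L)$ has block form $\bigl(\begin{smallmatrix}0 & A \\ B & 0\end{smallmatrix}\bigr)$, so its peripheral eigenvalues are $\pm \lambda$. Via Theorem \ref{T:eigen-knead} together with the Milnor--Thurston factorization of the kneading determinant of a tuning, both $+\lambda$ and $-\lambda$ are simple peripheral eigenvalues of $\mathcal{L} : BV \to BV$, with the rest of the spectrum strictly inside $\{|z| < \lambda\}$. Writing $\mathcal{L}^n = \lambda^n P_+ + (-\lambda)^n P_- + R_n$ with $\|R_n\|_{BV} = O(\rho^n)$ for some $\rho < \lambda$, the proof of Theorem \ref{T:unimodal-conv} applied to $\mathcal{L}^2$ (which has a genuine spectral gap below $\lambda^2$) shows that the even and odd subsequences $(H_{2k}), (H_{2k+1})$ each converge uniformly, to the limits
\begin{equation*}
H^{\pm}_\infty(x) \;=\; \frac{\langle m_0,\,(P_+ \pm P_-)(\chi_{[0,x]})\rangle}{\langle m_0,\,(P_+ \pm P_-)(1)\rangle}.
\end{equation*}
These two limits differ: the eigenvector at $-\lambda$ is antisymmetric across the partition $L \sqcup f(L)$ (as the block structure of $T$ makes transparent), so $\langle m_0, P_-(\chi_L) \rangle \neq 0$, producing a discrepancy between $H^+_\infty$ and $H^-_\infty$ at $x \in \partial L$. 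Thus $(H_n)$ does not converge uniformly, and since $f_n = H_n f H_n^{-1}$, the subsequences of $(f_n)$ converge to the distinct conjugates $H^{\pm}_\infty f (H^{\pm}_\infty)^{-1}$, so $(f_n)$ fails to converge either.

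The main technical obstacle is the spectral separation: verifying that $-\lambda$ actually survives as a peripheral eigenvalue on $BV$ (is not cancelled by a cyclotomic factor) and that the remaining spectrum lies strictly inside $|z| < \lambda$. For $h > \tfrac{1}{4}\log 2$ this is immediate from Theorem \ref{T:unispectrum} applied to $g$, which guarantees that $D_g$ has a unique simple peripheral root, so $D_f(t) = D_g(t^2) \cdot (\text{cyclotomic})$ has exactly the two simple roots $\pm \lambda^{-1}$ on its smallest circle. For smaller $h$ I would iterate the tuning $k$ times, choosing $k$ so that $2^k h \in (\tfrac{1}{2}\log 2, \log 2]$; the Hofbauer tower then inherits a cyclic period-$2^k$ block structure supplying peripheral eigenvalues $\lambda \zeta$ for each $2^k$-th root of unity $\zeta$, and running the oscillation argument along the subsequences $n \equiv 0, 1 \pmod{2^k}$ again yields distinct limits, completing the proof.
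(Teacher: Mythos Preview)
Your spectral approach is conceptually sound, but there is a real gap at the decisive step. You assert that ``$\langle m_0, P_-(\chi_L)\rangle \neq 0$, producing a discrepancy between $H_\infty^+$ and $H_\infty^-$'', as though this followed from the antisymmetry of the $(-\lambda)$-eigenvector across $L \sqcup f(L)$. It does not. Writing $P_- = \varphi_- \otimes \nu_-$, the quantity $\langle m_0, P_-(\chi_J)\rangle$ factors as $\nu_-(J)\int\varphi_-\,dm_0$, and for $H_\infty^+\neq H_\infty^-$ you need the map $x\mapsto \langle m_0, P_-(\chi_{[0,x]})\rangle$ not to be a constant multiple of $x\mapsto \langle m_0, P_+(\chi_{[0,x]})\rangle$. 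Whether this holds depends on how Lebesgue measure sits relative to the $(-\lambda)$-eigenspace of $f^\star$, i.e.\ on the specific real quadratic you chose; it is not forced by the block structure alone. The same unverified non-degeneracy recurs in your iterated-tuning argument for small $h$.

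The paper's proof takes a different and more elementary route that sidesteps this issue entirely. Starting from any quadratic $f$ with $h_{\mathrm{top}}(f)=h$ (necessarily a basilica tuning, so $K_f = I_0\cup I_1$ with $f$ swapping the two pieces), the paper does not try to show that the algorithm fails for $f$ itself. Instead it introduces a one-parameter family of conjugates $f_\alpha = h_\alpha\circ f\circ h_\alpha^{-1}$, where $h_\alpha$ is the homeomorphism associated to the metric $m^{(\alpha)}=\alpha\mu_0+(1-\alpha)\mu_1$ and $\mu_i$ is normalized Lebesgue on $I_i$. A short direct computation using only the swapping structure (no spectral theory) shows that $m_n^{(\alpha)}(I_0)$ alternates between two values involving $\alpha/(1-\alpha)$ and its reciprocal, so $\lim_n H_n^{(\alpha)}(h_\alpha(p))$ exists only when $\alpha=\tfrac12$; any $\alpha\neq\tfrac12$ furnishes the counterexample. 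In your language, the parameter $\alpha$ is exactly what lets one \emph{choose} the $P_-$-component of the initial metric to be nonzero, rather than hoping it is. Your argument becomes complete if you adopt the same device: conjugate your tuned map to break the symmetry instead of asserting it is already broken.
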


\begin{proof}
In fact, every real quadratic polynomial with entropy $h \leq \frac{1}{2} \log 2$ is the tuning of the basilica with some other polynomial. 
Thus, there exist two intervals $I_0, I_1$ with $I_0 \cup I_1 = K_f$ and intersecting only at a fixed point for $f$, such that  $f(I_0) = I_1$, $f(I_1) = I_0$. Let us label them so that $I_0$ contains the turning point. Then, one considers the metrics 
$$\mu_0 := \frac{m_0 \vert_{I_0}}{m_0(I_0)} \qquad \qquad \mu_1 := \frac{m_0 \vert_{I_1}}{m_0(I_1)}$$
which are the normalized restrictions of the Lebegue measure to $I_0, I_1$, respectively. 
Since the orbits of points under $f$ alternate between $I_0$ and $I_1$, the metric $P^n(\mu_i)$ is supported on $I_i$ for $n$ even, and supported on $I_{i+1}$ for $n$ odd 
(where the index $i$ is taken modulo $2$).

Now, for any $0 < \alpha < 1$, let us consider the metric
$$m^{(\alpha)} := \alpha \mu_0 + (1- \alpha) \mu_1.$$
For each $\alpha$, let us define $h_\alpha(x) := m^{(\alpha)}([0, x])$ and consider 
$f_\alpha := h_\alpha \circ f \circ h_\alpha^{-1}$.
Since $f_\alpha$ is topologically conjugate to $f$, it has the same topological entropy. 
We shall show that the tower algorithm can converge for at most one value of $\alpha$, which implies the claim.

Let $a_n := (f^{*n} \mu_0)(I_0)$.  Then, using the fact that $f$ maps $I_i$ to $I_{i+1}$ for $i = 0, 1$, we compute 
$$a_n^{(\alpha)} := (f^{*n} m^{(\alpha)})(I_0) =  \left\{ \begin{array}{ll} 
\alpha a_n & \textup{if }n \equiv 0 \mod 2 \\
(1-\alpha) a_n & \textup{if }n \equiv 1 \mod 2 .
\end{array} \right.$$
Moreover, since $f$ maps $I_1$ homeomorphically onto $I_0$, 
$$b_n^{(\alpha)} := (f^{*n} m^{(\alpha)} )(I_1) = (f^{*n-1} m^{(\alpha)} )(I_0) = a_{n-1}^{(\alpha)}.$$
Hence, if we set $m_n^{(\alpha)} := P^n(m^{(\alpha)})$, we compute 
$$m^{(\alpha)}_n(I_0) = \frac{a_n^{(\alpha)}}{a_n^{(\alpha)} + a_{n-1}^{(\alpha)}} = 
\left\{ \begin{array}{ll} \frac{\alpha a_n}{ \alpha a_n + (1-\alpha) a_{n-1}}  = \frac{1}{1 + \frac{1-\alpha}{\alpha} \frac{a_{n-1}}{a_n}}  & \textup{if }n \equiv 0 \mod 2 \\ 
\frac{(1-\alpha) a_n}{(1-\alpha) a_n + \alpha a_{n-1}}  =  \frac{1}{1 + \frac{\alpha}{1-\alpha} \frac{a_{n-1}}{a_n}} & \textup{if }n \equiv 1 \mod 2
\end{array}\right.$$
hence 
$\lim_{n \to \infty} m^{(\alpha)}_n(I_0)$ exists only if $\alpha = \frac{1}{2}$. 

Now, let us consider the tower algorithm for $f_\alpha$, and denote as $(f^{(\alpha)}_n)$ and $(H_n^{(\alpha)})$
the sequences produced by the algorithm starting with $f_\alpha$. 

Since $(f_\alpha)^n \circ h_\alpha = h_\alpha \circ f^n$, we obtain, as the pullback is contravariant
and using the definition $m^{(\alpha)} = h_\alpha^* (m_0)$, 
$$h_\alpha^*  f_\alpha^{*n} (m_0) = f^{*n}  h_\alpha^* (m_0) = f^{*n} m^{(\alpha)}. $$ 
Hence, a computation shows for any $x \in [0, 1]$, 
$$H_n^{(\alpha)}(x) = \frac{(f^{* n}_\alpha m_0)([0, x])}{(f^{* n}_\alpha m_0)([0, 1])} =  \frac{(f^{* n} m^{(\alpha)})([0, h_\alpha^{-1}(x)])}{(f^{* n}m^{(\alpha)})([0, 1])}  = m_n^{(\alpha)}([0, h_\alpha^{-1}(x)]).$$
Now, if we denote by $p$ the largest endpoint of $I_0$, and assume $(H_n^{(\alpha)})$ converges, we have 
$$\lim_{n \to \infty} H_n^{(\alpha)}(h_\alpha(p)) = \lim_{n \to \infty} m^{(\alpha)}_n([0, p])$$
exists, which implies $\alpha = \frac{1}{2}$. 

Finally, let us note that $p_n^{(\alpha)} := H_n^{(\alpha)}(h_\alpha(p))$ is the only fixed point of $f_n^{(\alpha)}$. 
Hence, if $(f_n^{(\alpha)})$ converges uniformly, then also $(p_n^{(\alpha)})$ should converge as $n \to \infty$, 
which as before implies $\alpha = \frac{1}{2}$. 

Since we can set $\alpha$ to any arbitrary value, the claim is proven. 
\end{proof}

\section*{Appendix. Relation with Thurston's algorithm} \label{appendix}

In this section, we compare Thurston's iteration on Teichm\"uller space with the tower algorithm. 

In the classical formulation of Thurston's algorithm \cite{DH}, the iteration to produce a rational map 
with given combinatorial data works as follows.
Let $f : S^2 \to S^2$ be a postcritically finite, orientation-preserving, branched covering of the sphere, and let $P \subseteq S^2$ be a finite set which contains the critical values of $f$ and such that $f(P) = P$. 
We denote as $\mathcal{T}_f$ the Teichm\"uller space of $(S^2, P)$. 
Elements of $\mathcal{T}_f$ are represented by diffeomorphisms $\varphi : (S^2, P) \to \mathbb{P}^1$, up to suitable equivalence: more precisely, $\varphi_1 \sim \varphi_2$ if there exists a conformal isomorphism $h : \mathbb{P}^1 \to \mathbb{P}^1$ such that $h \circ \varphi_1 \vert_{P}= \varphi_2 \vert_{P}$ and moreover $h \circ \varphi_1$ is isotopic to $\varphi_2$ rel $P$.
We denote as $[\varphi]$ the class of $\varphi$ as a point in Teichm\"uller space.
Let $\sigma_f : \mathcal{T}_f \to \mathcal{T}_f$ be the pullback map. 
The main iterative step is described as follows.

\begin{proposition}[\cite{DH}, Proposition 2.2]
If $\tau \in \mathcal{T}_f$ is represented by $\varphi : (S^2, P) \to \mathbb{P}^1$, then $\tau' := \sigma_f(\tau)$ is represented by $\varphi': (S^2, P) \to \mathbb{P}^1$ so that 
$$F := \varphi \circ f \circ (\varphi')^{-1} : \mathbb{P}^1 \to \mathbb{P}^1$$
is a rational map. 
\end{proposition}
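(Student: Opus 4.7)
The plan is to realize Thurston's pullback map $\sigma_f$ via complex structures and the measurable Riemann mapping theorem, and then verify the formula for $F$ by a direct pullback computation.

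First, I would pass between markings and complex structures. Let $\sigma_0$ denote the standard complex structure on $\mathbb{P}^1$, and set $\mu := \varphi^* \sigma_0$. Because $\varphi$ is a diffeomorphism, $\mu$ is a smooth almost complex structure on $(S^2, P)$. Two markings represent the same point of $\mathcal{T}_f$ exactly when their induced structures differ by a diffeomorphism of $(S^2, P)$ isotopic to the identity rel $P$; under this correspondence the point $\tau = [\varphi]$ is encoded by $\mu$.

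Second, I would pull $\mu$ back along $f$. Off the critical set, $f$ is a local diffeomorphism and $f^*\mu$ is a smooth almost complex structure. Near a critical point $c$, choose coordinates so that $\mu$ is the standard structure near $f(c)$; then $f$ is locally conjugate to $z \mapsto z^k$, and since $\sigma_0$ is invariant under $z \mapsto z^k$, the pulled-back structure $f^*\mu$ extends smoothly across $c$, with Beltrami coefficient satisfying $\| \mu_{f^*\mu} \|_\infty < 1$.

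Third, by the measurable Riemann mapping theorem there is a quasiconformal homeomorphism $\varphi' : S^2 \to \mathbb{P}^1$ with $(\varphi')^* \sigma_0 = f^* \mu$, unique up to post-composition by a M\"obius transformation. Normalize by pinning three images in $P$; smoothness of $f^* \mu$ upgrades $\varphi'$ to a diffeomorphism, giving the marking $\varphi' : (S^2, P) \to \mathbb{P}^1$ required for $\tau' := [\varphi']$.

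Finally, a direct chain-rule computation for pullbacks gives
$$F^* \sigma_0 = ((\varphi')^{-1})^* f^* \varphi^* \sigma_0 = ((\varphi')^{-1})^* f^* \mu = ((\varphi')^{-1})^* (\varphi')^* \sigma_0 = \sigma_0,$$
so $F : \mathbb{P}^1 \to \mathbb{P}^1$ preserves the standard complex structure and hence is holomorphic, i.e.\ rational. The main obstacle is the second step, ensuring that $f^*\mu$ extends across the critical points with Beltrami coefficient uniformly bounded away from one so that the MRMT applies; this is the standard local normal-form argument for branched covers. A minor point, handled by the uniqueness clause of the MRMT together with the equivalence relation on $\mathcal{T}_f$, is that the construction depends only on $[\varphi]$, so $\tau' = \sigma_f(\tau)$ is well-defined.
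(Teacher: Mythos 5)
Your argument is correct, but note that the paper does not prove this statement at all: it is quoted verbatim from Douady--Hubbard (\cite{DH}, Proposition 2.2) as the defining step of Thurston's pullback map, and the Appendix only uses it as a black box. What you wrote is essentially the canonical proof from that source: encode $\tau=[\varphi]$ by the structure $\mu=\varphi^*\sigma_0$, pull back along $f$, uniformize $f^*\mu$ by the measurable Riemann mapping theorem to get $\varphi'$ (this is in fact how $\sigma_f$ is defined), and check $F^*\sigma_0=\sigma_0$. Two small points worth making explicit if you wanted a complete writeup: first, the conclusion ``$F$ preserves $\sigma_0$ hence is rational'' needs a regularity remark --- $F$ is quasiregular (composition of quasiconformal maps with the branched cover $f$, assumed smooth or quasiregular in Thurston's setting) with a.e.\ vanishing Beltrami coefficient off the finite branch set, so Weyl's lemma plus removability of the branch points gives holomorphy, and a holomorphic degree-$d$ self-cover of $\mathbb{P}^1$ is rational; second, the uniform bound $\|\mu_{f^*\mu}\|_\infty<1$ really does require a hypothesis on $f$ beyond being a topological branched cover, which is why one fixes a smooth (or quasiregular) representative of the combinatorial class. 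Neither point affects the substance of your proof.
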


By applying the proposition repeatedly, one obtains a sequence $(\varphi_n)$ of homeomorphisms $(S^2, P) \to \mathbb{P}^1$, which fit in the following tower: 
$$\xymatrix{
S^2 \ar[r]^{\varphi_{n+1}}  \ar[d]^f & \mathbb{P}^1 \ar[d]^{F_n} \\
S^2 \ar[r]^{\varphi_{n}}  \ar@{..>}[d] & \mathbb{P}^1 \ar@{..>}[d] \\
S^2 \ar[r]^{\varphi_2}  \ar[d]^f & \mathbb{P}^1 \ar[d]^{F_1} \\
S^2 \ar[r]^{\varphi_1}  \ar[d]^f & \mathbb{P}^1 \ar[d]^{F_0} \\
S^2 \ar[r]^{\varphi_0}  & \mathbb{P}^1  
}$$
where each $F_n$ is a rational map, and the relation 
\begin{equation} \label{E:th1}
F_{n} \circ \varphi_{n+1} = \varphi_n \circ f
\end{equation}
holds for any $n \geq 0$. 

\bigskip
Another iterative scheme is introduced in \cite{Mil}; let us compare the two. 
To convert to the terminology of \cite{Mil}, let us fix a set $Q$ of three points, with $Q \subseteq P$. 
We can identify $S^2$ with $\mathbb{P}^1$ and start our iteration with $\varphi_0 = \textup{id}$. 
Moreover, by composing with M\"obius maps, we can make sure that the restriction of $\varphi_n$
to $Q$ is the identity for each $n$.

Then we set the following: 
\begin{align}
f_n & := \varphi_n \circ f \circ \varphi_n^{-1} \label{E:1} \\
h_n & := \varphi_{n+1} \circ \varphi_n^{-1}  \label{E:2} \\
g_n & := F_n.
\end{align}
Then $f_n$ is topologically conjugate to $f$, $h_n$ is a homeomorphism and $g_n$ is a rational map. 
Finally, by construction our $h_n$'s are normalized so that the restriction of each $h_n$ to $Q$ is the identity.

\begin{lemma}
Under the above definitions, we have the identities
$$g_n \circ h_n = f_n$$
$$h_n \circ g_n = f_{n+1}$$
for any $n \geq 0$. 
\end{lemma}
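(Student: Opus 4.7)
The plan is to derive both identities as formal consequences of the single commutation relation \eqref{E:th1}, namely $F_n \circ \varphi_{n+1} = \varphi_n \circ f$, together with the three definitions $f_n = \varphi_n \circ f \circ \varphi_n^{-1}$, $h_n = \varphi_{n+1} \circ \varphi_n^{-1}$, and $g_n = F_n$. No additional dynamical input is required: the lemma is essentially a bookkeeping statement asserting that Thurston's tower, once the pieces are renamed in Milnor's language, specializes to the same commuting diagram as the tower algorithm of Section 2.

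For the first identity, I would simply substitute the definitions into $g_n \circ h_n$ to obtain $F_n \circ \varphi_{n+1} \circ \varphi_n^{-1}$, and then apply \eqref{E:th1} to the first two factors to replace $F_n \circ \varphi_{n+1}$ by $\varphi_n \circ f$. What remains is $\varphi_n \circ f \circ \varphi_n^{-1}$, which is precisely $f_n$ by \eqref{E:1}. This uses \eqref{E:th1} in its ``given'' form.

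For the second identity, the move is to first rearrange \eqref{E:th1} by post-composing with $\varphi_{n+1}^{-1}$, yielding the equivalent formula $F_n = \varphi_n \circ f \circ \varphi_{n+1}^{-1}$. Substituting this into $h_n \circ g_n = \varphi_{n+1} \circ \varphi_n^{-1} \circ F_n$ gives
\[
h_n \circ g_n \;=\; \varphi_{n+1} \circ \varphi_n^{-1} \circ \varphi_n \circ f \circ \varphi_{n+1}^{-1} \;=\; \varphi_{n+1} \circ f \circ \varphi_{n+1}^{-1},
\]
which is $f_{n+1}$ by \eqref{E:1} at level $n+1$. The only ``care'' needed is the reminder that $\varphi_n, \varphi_{n+1}$ are homeomorphisms, so these inverses and cancellations are legal on the nose (not merely up to isotopy).

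There is no substantial obstacle: the lemma is a direct computation. The one thing worth flagging in the write-up is that the normalization convention introduced just before the lemma---fixing $\varphi_0 = \mathrm{id}$ and requiring each $\varphi_n$ to restrict to the identity on the three-point set $Q$---is used only to make the $\varphi_n$ honest maps (rather than Teichm\"uller classes) so that the compositions above literally make sense. Once that is granted, the two identities are formal, and together they exhibit $g_n, h_n$ as fitting into exactly the commuting square that defines one step of the tower algorithm.
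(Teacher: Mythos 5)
Your proof is correct and is essentially the same computation as the paper's: both reduce the two identities to direct substitution of the definitions \eqref{E:1}, \eqref{E:2} together with the Thurston relation \eqref{E:th1} (which the paper also uses in the rearranged form $F_n = \varphi_n \circ f \circ \varphi_{n+1}^{-1}$), followed by cancellation of $\varphi_n^{-1}\circ\varphi_n$ or $\varphi_{n+1}^{-1}\circ\varphi_{n+1}$. Nothing further is needed.
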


\begin{proof}
Using \eqref{E:1}, \eqref{E:2}, and \eqref{E:th1}, one checks
\begin{align*}
g_n \circ h_n & = (\varphi_n \circ f \circ \varphi_{n+1}^{-1})  \circ  (\varphi_{n+1} \circ \varphi_n^{-1}) \\
 & = \varphi_n \circ f \circ \varphi_{n}^{-1} \\
 & = f_{n}.
\end{align*}
Similarly, 
\begin{align*}
h_n \circ g_n & = (\varphi_{n+1} \circ \varphi_n^{-1}) \circ (\varphi_n \circ f \circ \varphi_{n+1}^{-1}) \\
 & = \varphi_{n+1} \circ f \circ \varphi_{n+1}^{-1} \\
 & = f_{n+1}
\end{align*}
as needed.
\end{proof}

Thus, this iteration scheme fits into the tower
$$\xymatrix{
S^2 \ar[r]^{f_{n+1}}  \ar[dr]^{g_n} & S^2  \\
S^2 \ar[r]^{f_n} \ar[u]^{h_n}   & S^2 \ar[u]_{h_n}  \\
S^2 \ar[r]^{f_2}  \ar[dr]^{g_1} \ar@{..>}[u] & S^2 \ar@{..>}[u] \\
S^2 \ar[r]^{f_1}  \ar[dr]^{g_0} \ar[u]^{h_1} & S^2  \ar[u]_{h_1}\\
S^2 \ar[r]^{f_0} \ar[u]^{h_0} & S^2 \ar[u]_{h_0} 
}$$

which is precisely the iterative process discussed in \cite{Mil}. 
In particular, as a corollary of Thurston's theorem, one gets the following convergence result, 
as mentioned in \cite{Mil-slides}.

An orientation-preserving branched covering of the sphere $f$ is called a \emph{topological real polynomial} 
if $f^{-1}(\infty) = \infty$ and $f$ preserves the real axis, i.e. $f(\mathbb{R}) \subseteq \mathbb{R}$ (recall that we have fixed an identification between $S^2$ and $\mathbb{P}^1$). 
A topological real polynomial of degree $d$ is in \emph{normal form} if $f(0) = 0$, $f(1) \in \{0, 1\}$ and $f$ has $d-1$ distinct real critical points, all of which lie in the interior of $I = [0, 1]$.

\begin{theorem}[\cite{Mil-slides}]
Suppose $f$ is a postcritically finite topological real polynomial in normal form with at least $4$ postcritical points, and has no obstruction. Then the sequence $(g_n)$ converges uniformly to a real polynomial map $g_\infty$.
\end{theorem}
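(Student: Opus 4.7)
The plan is to bootstrap Thurston's theorem in the form stated in \cite{DH}: because $f$ is postcritically finite with at least four postcritical points (so $\mathcal{T}_f$ is nontrivial and the pullback $\sigma_f$ is well defined as a weak contraction of the Teichm\"uller metric) and has no Thurston obstruction, $\sigma_f$ has a unique fixed point $\tau_\infty \in \mathcal{T}_f$ and the orbit $\sigma_f^n(\tau_0)$ converges to $\tau_\infty$ for every starting point $\tau_0$. I would take $\tau_0 = [\varphi_0] = [\textup{id}]$ so that the sequence $[\varphi_n]$ produced in the Milnor tower coincides with the Thurston iteration, giving $[\varphi_n] \to [\varphi_\infty]$ in $\mathcal{T}_f$.

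Next I would lift this Teichm\"uller-space convergence to uniform convergence of the representatives themselves. Since we have fixed three marked points $Q \subseteq P$ and normalized each $\varphi_n$ to be the identity on $Q$, each class $[\varphi_n]$ has a canonical representative and the assignment $\tau \mapsto \varphi_\tau$ is continuous in, say, the $C^0$ topology of maps $S^2 \to \mathbb{P}^1$ (one can even get $C^k$-convergence off a neighborhood of the critical values, using that the local inverses of $F_n$ near $\tau_\infty$ are uniformly controlled). The real-analytic symmetry is preserved by the iteration because $f$ commutes with complex conjugation and the real slice of $\mathcal{T}_f$ is invariant under $\sigma_f$; together with the normal-form conditions $\varphi_n(0)=0, \varphi_n(1)=1, \varphi_n(\infty)=\infty$, this forces every $\varphi_n$ and hence $\varphi_\infty$ to be real-symmetric and to preserve $[0,1]$. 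From $\varphi_n \to \varphi_\infty$ uniformly on the sphere, and the identity $g_n = F_n = \varphi_n \circ f \circ \varphi_{n+1}^{-1}$ (obtained by inverting \eqref{E:th1}), I would conclude that $g_n$ converges uniformly to
$$g_\infty := \varphi_\infty \circ f \circ \varphi_\infty^{-1}.$$
The limit is rational of degree $d$ because each $g_n$ is rational of degree $d$ and uniform limits of rational maps of a fixed degree, when finite, are rational of that degree (Montel/normal family argument); it is polynomial because the normal form keeps $\infty$ as a totally ramified fixed point throughout the iteration, a condition closed under uniform convergence; and it is real by the above symmetry.

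The main obstacle is the passage from convergence in Teichm\"uller space to uniform convergence of actual maps on $S^2$, since Teichm\"uller convergence only controls complex structures up to isotopy rel $P$. This is handled by the three-point normalization together with the Arzel\`a--Ascoli-type compactness that comes from the uniform bound on the Beltrami coefficients along a convergent Teichm\"uller orbit: once $d_{\mathcal{T}}([\varphi_n],[\varphi_\infty]) \to 0$, one produces isotopies $\psi_n \simeq \textup{id}$ rel $P$ with $\psi_n \circ \varphi_n \to \varphi_\infty$ uniformly, and the three-point normalization kills the remaining M\"obius ambiguity so that no isotopy correction is needed. The hypothesis of at least four postcritical points is precisely what makes this Teichm\"uller space hyperbolic and $\sigma_f$ a genuine contraction on orbits, which is the crucial input; the ``no obstruction'' hypothesis is what guarantees that the orbit stays in a bounded region of $\mathcal{T}_f$ rather than escaping to its boundary.
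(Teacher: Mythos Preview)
Your overall strategy---invoke Thurston's theorem to get $[\varphi_n]\to[\varphi_\infty]$ in $\mathcal{T}_f$, then upgrade to convergence of the actual maps---matches the paper's. The gap is in the upgrade step. You assert that three-point normalization ``kills the remaining M\"obius ambiguity so that no isotopy correction is needed,'' but that is not what three-point normalization does: it removes the \emph{postcomposition-by-M\"obius} freedom, not the freedom to precompose the representative by a homeomorphism of $(S^2,P)$ isotopic to the identity rel $P$. Teichm\"uller convergence only gives you $\psi_n\circ\varphi_n\to\varphi_\infty$ for some isotopies $\psi_n\simeq\textup{id}$ rel $P$; nothing in your argument forces $\psi_n\to\textup{id}$, so you have not established $\varphi_n\to\varphi_\infty$ uniformly on $S^2$. (It can be done, but it needs a genuine argument about how the specific lifts in the tower are constructed, not just compactness of Beltrami coefficients.) And even granting that, the passage to $g_n=\varphi_n\circ f\circ\varphi_{n+1}^{-1}\to\varphi_\infty\circ f\circ\varphi_\infty^{-1}$ requires uniform convergence of the \emph{inverses} $\varphi_{n+1}^{-1}$, which you have not addressed.

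The paper avoids this entirely by asking for much less: it only extracts from Teichm\"uller convergence the convergence of $\varphi_n$ on the \emph{finite set} $P$. This follows cleanly because the comparison maps $\iota_n$ (with $\iota_n\circ\varphi_n|_P=\varphi_\infty|_P$, $\iota_n|_Q=\textup{id}$, and $K(\iota_n)\to 1$) converge uniformly to the identity, so $\varphi_n(p)\to\varphi_\infty(p)$ for each $p\in P$. In particular the critical values of $F_n$ converge to those of $F_\infty$. The paper then finishes with the key structural fact that real polynomials of degree $d$ in normal form are \emph{parameterized by their critical values}: the critical-value map is a homeomorphism onto its image (citing \cite{BMS}, \cite{MiTr}, \cite{Ch}), so convergence of critical values immediately yields uniform convergence $F_n\to F_\infty$. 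This sidesteps any need to control $\varphi_n$ off $P$, and is the idea your proposal is missing.
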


\begin{proof}
Let us set $Q = \{0, 1, \infty\}$ and $P = P_f \cup Q$, where $P_f$ is the postcritical set of $f$.
By Thurston's theorem \cite{DH}, there exists a homeomorphism $\varphi_\infty : S^2 \to \mathbb{P}^1$ so that 
$[\varphi_n] \to [\varphi_\infty]$ in the Teichm\"uller metric. This also yields a rational map $F_\infty$ 
which is (Thurston)-equivalent to $f$, meaning that $F_\infty \circ \varphi_\infty\vert_{P} = \varphi_\infty \circ f\vert_{P}$ and $F_\infty \circ \varphi_\infty \sim \varphi_\infty \circ f$ 
up to isotopy rel $P$.

This means that there exists for each $n$ a quasi-conformal map 
$\iota_n : \mathbb{P}^1 \to \mathbb{P}^1$ with 
$\iota_n \circ \varphi_n \sim \varphi_\infty$ $\textup{rel }P$ and 
$\iota_n \circ \varphi_n\vert_{P} = \varphi_\infty \vert_{P}$
so that the quasi-conformality constant satisfies $K(\iota_n) \to 1$. 
By our normalization, we also have $\iota_n \vert_Q = \textup{id}$.
This implies that for any $p \in P$ we have 
$$\varphi_n(p) \to \varphi_\infty(p)$$
as $n \to \infty$. 
In particular, the critical values of $F_n$ converge to the critical values of $F_\infty$. 
Let $\mathcal{F}_d$ be the set of real polynomials of degree $d$ in normal form.
Since the map 
$$\mathcal{F}_d \ni f \mapsto (v_1, \dots, v_{d-1}) \in \mathbb{R}^{d-1}$$ 
which assigns to a polynomial in normal form its critical values is a homeomorphism onto its image (\cite[Lemmas 3.1, 3.2]{BMS}, see also \cite[Appendix A]{MiTr}, \cite{Ch}), this implies that $F_n$ converges uniformly to $F_\infty$. (Recall that $g_n = F_n$ by definition).
\end{proof}

\end{document}